\theoremstyle{plain}
\newtheorem{mainthm}{Theorem}
\newtheorem{theorem}{Theorem}[section]
\newtheorem{corollary}[theorem]{Corollary}
\newtheorem{lemma}[theorem]{Lemma}
\newtheorem{proposition}[theorem]{Proposition}
\theoremstyle{definition}
\newtheorem{remark}[theorem]{Remark}
\numberwithin{equation}{section} 
\newcommand\blfootnote[1]{%
  \begingroup
  \renewcommand\thefootnote{}\footnote{#1}%
  \addtocounter{footnote}{-1}%
  \endgroup
}
\newcounter{comcount}
\begin{document}

\title[Reverse Riesz inequality on metric cone]{On the Reverse Inequality of Riesz transform on metric cone with potential} 
\date{}
\author{DANGYANG HE}  
\address{Department of Mathematics, Sun Yat-sen University, Guangzhou, China, Formerly School of Mathematics and Physics, Macquarie University, Sydney, Australia}
\email{hedy28@mail.sysu.edu.cn}

\begin{abstract}
Let $M=(0,\infty)_r\times Y$ be a $d$-dimensional ($d\ge3$) metric cone with metric $g=dr^2+r^2h$, where $(Y,h)$ is a closed Riemannian manifold. Consider the Schrödinger operator $H=\Delta+V_0/r^2$, with $V_0\in C^\infty(Y)$ satisfying the positivity condition: $\Delta_Y+V_0+(d-2)^2/4>0$.

First, we complement previous results by proving Lorentz–type endpoint estimates for the Riesz transform $\nabla H^{-1/2}$: it is of restricted weak type at both endpoints of its $L^p$-boundedness range. Second, we establish the sharp reverse inequality
\[
\|H^{1/2}f\|_{p} \lesssim \|\nabla f\|_{p}+\bigl\|\tfrac{f}{r}\bigr\|_{p}
\]
if and only if
\[
\frac{d}{\min\left(\frac{d+4}{2}+\mu_0,\, d\right)} < p < \frac{d}{\max\left(\frac{d-2}{2}-\mu_0,\, 0\right)},
\]
where $\mu_0>0$ is the square root of the smallest eigenvalue of $\Delta_Y+V_0+(d-2)^2/4$. 

\end{abstract}

\maketitle

\tableofcontents

\blfootnote{$\textit{2020 Mathematics Subject Classification.}$ 42B37, 58J05}

\blfootnote{$\textit{Keywords and Phrases.}$ Riesz transform, reverse Riesz inequality, Schrödinger operator, endpoint estimate, metric cone.}

\section{Introduction}
Let $M$ be a complete Riemannian manifold. Denote by $\nabla$ and $\Delta$ the usual Riemannian gradient and Laplace--Beltrami operator, respectively. The Riesz transform is given by
\begin{equation*}
    R = \nabla \Delta^{-1/2}.
\end{equation*}

First posed by Strichartz \cite{Str}, the question of the $L^p$-boundedness of $R$ has been investigated for decades. In particular, we say that the Riesz transform is bounded on $L^p$ (or $L^p$-bounded) if $\|R\|_{p\to p}\le C$, or equivalently if
\begin{equation}\label{Rp}\tag{$\textrm{R}_p$}
    \| \nabla f \|_p \le C \| \Delta^{1/2} f \|_p, \quad \forall f\in C_c^\infty(M).
\end{equation}
We say that the reverse Riesz inequality holds if
\begin{equation}\label{RRp}\tag{$\textrm{RR}_p$}
    \| \Delta^{1/2} f \|_p \le C \| \nabla f \|_p, \quad \forall f\in C_c^\infty(M).
\end{equation}

These problems turn out to be extremely difficult because of their strong dependence on the geometry of the underlying structure. Unlike in the Euclidean case, the Riesz transform problem \eqref{Rp} is usually divided into two parts. For $p<2$, the question is comparatively well understood. Indeed, under very mild assumptions, Coulhon and Duong \cite{CoDu} show that $R$ is $L^p$-bounded for all $1<p\le 2$ and is of weak type $(1,1)$. For $p>2$, gradient estimates (for instance for the heat kernel or for local harmonic functions) turn out to be particularly significant.

Recall the volume doubling condition
\begin{equation}\label{doubling}
V(x,2r) \le C V(x,r),\quad \forall x\in M,\quad \forall r>0,
\end{equation}
where $V(x,r)$ denotes the volume of the ball $B(x,r)$, and the $L^q$-Poincaré inequality
\begin{equation}\label{Pq}\tag{$\textrm{P}_q$}
    \int_B |f-f_B|^q \, d\mu \le C r^q \int_B |\nabla f|^q \, d\mu,\quad \forall f\in C^\infty(B),\quad \forall B\subset M.
\end{equation}

According to \cite{ACDH}, under the assumptions \eqref{doubling} and $(\textrm{P}_2)$, the boundedness of the Riesz transform \eqref{Rp} is equivalent to the following heat kernel regularity:
\[
    \sup_{t>0} \bigl\| \sqrt{t} \,\nabla e^{-t\Delta}\bigr\|_{p\to p} \le C.
\]
Subsequently, Coulhon--Jiang--Koskela--Sikora and Jiang added a so-called reverse Hölder condition to extend the above equivalence under even weaker assumptions; see \cite{CJKS} and \cite{Jiang}, respectively.

However, on general manifolds, both conditions \eqref{doubling} and \eqref{Pq} can be violated quite easily. For example, on Lie groups with exponential growth, \eqref{doubling} fails, and on the connected sum of two Euclidean spaces, $(\textrm{P}_2)$ fails to hold. Nevertheless, in the Lie group with exponential volume growth, \cite{Hebisch} (see also \cite{SV}) implies that the first-order Riesz transform is still bounded in $L^p$ for all $1<p<\infty$. For the case $\mathbb{R}^n \# \mathbb{R}^n$, where the Poincaré inequality fails, the result of \cite{CCH} (see also \cite{CoDu}) shows that \eqref{Rp} holds if and only if $1<p<n$. For more studies of the Riesz transform on perturbed manifolds, we refer the reader to \cite{HS,HNS,Carron} and the references therein. 

In this article, however, we will focus on the metric cone. Let $(Y,h)$ be a smooth closed Riemannian manifold. The metric cone is defined to be the product $M = (0,\infty)_r \times Y$ with metric $g = dr^2 + r^2 h$. In particular, $\mathbb{R}^n \setminus \{0\}$ is a metric cone of dimension $n$ with cross section $\mathbb{S}^{n-1}$. It is known that if $Y$ is connected (so that $M$ has only one end), then $M$ satisfies \eqref{doubling} and $(\textrm{P}_1)$ (see \cite{Coulhon-Li, GrSa}), and its corresponding heat kernel estimates can be found in \cite{HQLi2, Coulhon-Li}. 

The question regarding \eqref{Rp} was answered by Li in the following theorem.

\begin{theorem}\cite[Théorème~I]{HQLi}\label{thmL}
Let $d\ge 3$ and let $M$ be a metric cone of dimension $d$ with cross section $Y$. Then \eqref{Rp} holds if and only if 
\begin{equation}
    1< p < \frac{d}{\max\left( \frac{d}{2}-\mu_1, 0 \right)},
\end{equation}
where $\mu_1>0$ is the square root of the second smallest eigenvalue of the operator $\Delta_Y + \left( \frac{d-2}{2} \right)^2$.
\end{theorem}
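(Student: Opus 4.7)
The plan is to exploit separation of variables on the cone. Let $\{\phi_k\}_{k\ge 0}$ be an $L^2(Y)$-orthonormal basis of eigenfunctions of $\Delta_Y+((d-2)/2)^2$ with eigenvalues $\mu_k^2$ ordered as $\mu_0<\mu_1\le\mu_2\le\cdots$; connectedness of $Y$ forces $\mu_0=(d-2)/2$, coming from the constant mode. Conjugating by $r^{(d-1)/2}$ to strip the radial drift, $\Delta$ restricts on each angular mode to a one-dimensional Bessel-type operator whose heat kernel, resolvent, and hence the kernel of $\Delta^{-1/2}$ admit explicit representations in terms of the modified Bessel functions $I_{\mu_k}$ and $K_{\mu_k}$. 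This is the quantitative toolkit on which the rest of the proof rests.

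For the sub-critical range $1<p\le 2$: since $Y$ is connected, $M$ satisfies volume doubling and the $L^1$-Poincar\'e inequality, while the work of Coulhon--Li and H.-Q.~Li supplies two-sided Gaussian heat kernel bounds. The Coulhon--Duong criterion then yields $L^p$-boundedness of $R$ on all of $(1,2]$ together with weak type $(1,1)$. Note that $(1,2]$ is automatically contained in the claimed range, because $\mu_1>0$ forces $d/\max((d/2)-\mu_1,0)>2$, so the genuine issue is confined to $p>2$.

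For $2<p<d/\max((d/2)-\mu_1,0)$, I would derive pointwise kernel estimates for $R=\nabla\Delta^{-1/2}$ by splitting the gradient into its radial component $\partial_r$ and its transverse component $r^{-1}\nabla_Y$, and using $\Delta^{-1/2}=c\int_0^\infty e^{-t\Delta}\,dt/\sqrt{t}$ to pass from the per-mode Bessel heat kernel $I_{\mu_k}(r\rho/2t)$ to the Riesz kernel. The $k=0$ term produces an operator modeled on the Euclidean Riesz transform and is $L^p$-bounded for every $1<p<\infty$. For $k\ge 1$, the small-argument asymptotics $I_{\mu_k}(z)\sim z^{\mu_k}$ force the $k$-th mode of the Riesz kernel to vanish like $(r/\rho)^{\mu_k}$ near the tip (for $r\le\rho$); summing the angular series using uniform-in-$\mu$ Bessel asymptotics and absorbing eigenvalue multiplicities via the Weyl law for $(Y,h)$ yields an off-diagonal global kernel whose $L^p$-integrability against the cone measure is sharp at the predicted threshold.

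Sharpness is the most delicate piece. For $p\ge d/((d/2)-\mu_1)$ in the regime $\mu_1<d/2$, I would test on a family of cutoff versions of the $\Delta$-harmonic function $u(r,y)=r^{-(d-2)/2+\mu_1}\phi_1(y)$, where the radial exponent is the one that solves the indicial equation $\alpha^2+(d-2)\alpha=\mu_1^2-((d-2)/2)^2$. Specifically, set $f_\varepsilon=\chi_\varepsilon u$ with $\chi_\varepsilon$ a two-sided logarithmic cutoff between scales $\varepsilon$ and $\varepsilon^{-1}$. Then $|\nabla f_\varepsilon|\asymp r^{-d/2+\mu_1}$ throughout the bulk of the support, while $\Delta f_\varepsilon$, and hence (via the explicit Bessel kernel of $\Delta^{1/2}$ from the first paragraph) $\Delta^{1/2}f_\varepsilon$, is concentrated on thin commutator annuli where $\chi_\varepsilon'$ lives; tracking both $L^p$ norms in the cone measure $r^{d-1}\,dr\,d\mathrm{vol}_Y$ and sending $\varepsilon\to 0$ makes $\|\nabla f_\varepsilon\|_p/\|\Delta^{1/2}f_\varepsilon\|_p$ diverge at and beyond the critical exponent. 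The principal obstacle throughout is the uniform summation of the angular series: one needs $I_{\mu_k}$ and $K_{\mu_k}$ asymptotics valid in all regimes of argument and index, combined with Weyl-law multiplicity bounds on $Y$, and one must treat the cone tip as a genuine conical singularity lying outside the reach of the ACDH framework.
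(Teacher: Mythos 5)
This theorem is cited from \cite{HQLi} (Th\'eor\`eme~I) and is not proved in the present paper, so there is no internal proof to compare against. That said, your proposed route tracks what Li and the related literature actually do, and mirrors the paper's own strategy for the analogous statements with potential (Theorems~\ref{thm1}--\ref{thm2}): separate variables in the cross-section eigenbasis, write the resolvent kernel via modified Bessel functions as in \eqref{formula1}--\eqref{formula2}, observe that the $k=0$ mode is bounded on all of $(1,\infty)$ (cf.\ Remark~\ref{remark_cone}), and sum the angular series using uniform Bessel asymptotics together with the Weyl law (Lemma~\ref{leHL2}). One correction: the Coulhon--Duong step for $1<p\le 2$ needs only the doubling condition and a Gaussian heat-kernel upper bound, not the Poincar\'e inequality, and connectedness of $Y$ is not assumed in the theorem; this matters, since the most interesting instances (e.g.\ $\mathbb{R}^n \# \mathbb{R}^n$) have disconnected $Y$, where $\mu_1 = (d-2)/2$ and the threshold is $d$.

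There is a genuine gap in your sharpness argument. You assert that because $\Delta f_\varepsilon = [\Delta,\chi_\varepsilon]\,u$ is supported on the cutoff annuli, $\Delta^{1/2}f_\varepsilon$ is ``concentrated'' there too, so that $\|\Delta^{1/2}f_\varepsilon\|_p$ can be computed from the annular contribution alone. But $\Delta^{1/2} = \Delta^{-1/2}\Delta$ is nonlocal: the kernel of $\Delta^{-1/2}$ decays only like $d(z,z')^{1-d}$, so $\Delta^{1/2}f_\varepsilon$ has long-range tails spreading from the annuli across the whole cone, and it is precisely the $L^p$ mass of these tails (competing against the $\|\nabla f_\varepsilon\|_p$ divergence) that decides the outcome. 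To close this you need a quantitative kernel bound for $\Delta^{-1/2}$ (which the Bessel machinery does supply) and an actual computation showing the tail contribution is $o(\|\nabla f_\varepsilon\|_p)$ as $\varepsilon\to 0$; as written this is an assertion, not a proof. The paper's own sharpness proofs in the potential case (Lemma~\ref{lenegative}) sidestep the issue entirely by exhibiting an explicit $f$ with finite Lorentz norm for which $|Rf|=+\infty$ a.e., read off directly from the per-mode Bessel kernel formulas \eqref{formula5}--\eqref{formula6}; the same device at $V=0$, with $\mu_0$ replaced by $\mu_1$ as in Remark~\ref{remark_cone}, would give the sharpness here more cleanly than the cutoff-harmonic-function test.
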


The above result was generalized by Hassell and Lin \cite{HL} by adding an inverse-square potential. Let $V_0: Y \to \mathbb{C}$ be a smooth function such that
\begin{equation}\label{positivity}
    \Delta_Y + V_0(y) + \left( \frac{d-2}{2} \right)^2 > 0.
\end{equation}
Note that this condition allows $V_0$ to be negative (for example, $V_0$ may be any constant larger than $-\frac{(d-2)^2}{4}$). Consider the Schrödinger operator
\[
    H = \Delta + \frac{V_0}{r^2}.
\]
Observe that $H$ is self-adjoint and positive. We can then define the associated Riesz transform by
\begin{equation*}
    \nabla H^{-1/2} = \nabla \left( \Delta + \frac{V_0}{r^2} \right)^{-1/2}.
\end{equation*}

\begin{theorem}\cite[Theorem~1.3]{HL}\label{thmHL}
Let $d\ge 3$ and let $M$ be a metric cone of dimension $d$ with cross section $Y$. Let $V_0$ be a smooth function defined on $Y$ that satisfies \eqref{positivity} and $V_0 \not\equiv 0$. Then the Riesz transform with potential $V = \frac{V_0}{r^2}$ is bounded on $L^p$ if and only if $p$ lies in the interval
\begin{equation}\label{interval}
    \left( \frac{d}{\min \left( \frac{d+2}{2} + \mu_0, d \right)},  \frac{d}{\max \left( \frac{d}{2} - \mu_0, 0 \right)}    \right),
\end{equation}
where $\mu_0$ is the square root of the smallest eigenvalue of the operator $\Delta_Y + V_0 + \left( \frac{d-2}{2} \right)^2$.

That is, the operator $\nabla H^{-1/2} = \nabla \left( \Delta + V \right)^{-1/2}$ is $L^p$-bounded if and only if $p$ belongs to the interval \eqref{interval}.
\end{theorem}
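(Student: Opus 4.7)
The plan is to exploit the separable structure of the cone and reduce the analysis of $\nabla H^{-1/2}$ to a family of one-dimensional radial operators indexed by the spectrum of $P := \Delta_Y + V_0$. Writing $P \phi_j = \alpha_j \phi_j$ for an orthonormal basis $\{\phi_j\}$, one has $\mu_j^2 := \alpha_j + (d-2)^2/4 > 0$ by \eqref{positivity}, and on the eigenspace spanned by $\phi_j$ the operator $H$ acts as the Bessel-type operator
\[
L_j = -\partial_r^2 - \frac{d-1}{r}\partial_r + \frac{\mu_j^2 - (d-2)^2/4}{r^2}.
\]
Each $L_j$ diagonalises via a Hankel transform of order $\mu_j$, so the functional calculus of $H$ unfolds into explicit Hankel integrals. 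My first step would be to write $H^{-1/2}$ either by subordination, $H^{-1/2} = \pi^{-1/2}\int_0^\infty t^{-1/2} e^{-tH}\,dt$, or directly in Hankel form, and then to express $\nabla H^{-1/2}$ as an integral operator, splitting the gradient into its radial part $\partial_r$ and its spherical part $r^{-1}\nabla_Y$.

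For the sufficiency direction I would decompose the kernel of $\nabla H^{-1/2}$ into a \emph{high-energy} and a \emph{low-energy} part. The high-energy part should behave like a standard Calder\'on--Zygmund kernel adapted to the doubling space $M$, and $L^p$-boundedness on the full range $1<p<\infty$ would follow from Li's Gaussian heat kernel bounds together with the Poincar\'e inequality $(\mathrm{P}_1)$ on $M$ and the criteria of Coulhon--Duong and \cite{ACDH}. The low-energy part encodes the genuine obstruction coming from the zero mode $\phi_0$: the leading asymptotics of $H^{-1/2}$ on that mode involve the two indicial homogeneities $r^{-(d-2)/2 \pm \mu_0}$, so the Green kernel of $L_0$ takes the explicit form $r_<^{-(d-2)/2 + \mu_0} r_>^{-(d-2)/2 - \mu_0}$ up to constants, and weighted Hardy--type inequalities on $(0,\infty)$ should produce the sharp $L^p$ range for this contribution. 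Summation in $j$ is harmless because $\mu_j \ge \mu_0$ forces the higher modes into strictly better Hardy estimates, and the contribution of each $j \ge 1$ is controlled uniformly by the standard Riesz bound with an exponent gain proportional to $\mu_j$.

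For the necessity direction I would construct explicit test functions that fail $L^p$-boundedness at each endpoint. At the upper endpoint $p_+ = d/\max((d-2)/2 - \mu_0, 0)$, take $f(r,y) = \chi(r)\phi_0(y)$ with $\chi$ smooth and compactly supported away from $r=0$; then the $\phi_0$-component of $H^{-1/2}f$ inherits a non-trivial tail of size $c\, r^{-(d-2)/2 + \mu_0}$ at infinity, so $|\nabla H^{-1/2}f|$ decays like $r^{-(d-2)/2 + \mu_0 - 1}$, whose $L^p(M,r^{d-1}dr\,dy)$ norm diverges precisely when $p \ge p_+$. The lower endpoint $p_- = d/\min((d+2)/2 + \mu_0,\, d)$ is obtained by a dual or rescaled construction exploiting the complementary homogeneity $r^{-(d-2)/2 - \mu_0}$ as the asymptotic profile of $H^{-1/2}f$ near the cone tip; note that $(p_-)'$ is consistent with $p_+$ up to the shift of $2$ in the exponent, which is the hallmark of the inverse-square perturbation.

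The main obstacle is the sharp low-energy analysis: one needs weighted Hardy--Rellich-type estimates for the Bessel operator $L_0$ that are simultaneously tight at both endpoints, which forces a careful matching of the explicit Bessel asymptotics in the regions $r \sim r'$, $r \ll r'$, and $r \gg r'$. A secondary subtlety, which explains why the window differs from the one in Theorem~\ref{thmL}, is that $H$ has no zero-resonance cancellation on constants when $V_0 \not\equiv 0$: the cross-section spectrum starts at $\mu_0$ rather than at $\mu_1$, and both endpoints pick up the corresponding $\mu_0$-shift, while the left endpoint is further displaced by $2$ because the inverse-square potential alters the indicial equation for the radial Green function. Once the zero-mode kernel is analysed in the correct weighted setting, the remaining analysis reduces to routine harmonic analysis on the doubling space $M$.
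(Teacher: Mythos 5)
Your overall blueprint (expansion in eigenmodes of $\Delta_Y+V_0$, reduction to Bessel‐type radial operators, explicit kernel asymptotics in the three regimes $r\sim r'$, $r\ll r'$, $r\gg r'$, weighted Hardy criteria for the off-diagonal pieces, and test functions localized on $\phi_0$ for the converse) is close in outline to Hassell--Lin's actual argument, which the paper imports via Lemmas~\ref{leHL1}--\ref{leHL3}. However, two steps would not go through as stated.

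First, the near-diagonal (your ``high-energy'') piece cannot be disposed of by citing Li's Gaussian heat-kernel bounds and the Coulhon--Duong / \cite{ACDH} criteria. Those criteria certify boundedness of $\nabla\Delta^{-1/2}$ for the Laplace--Beltrami operator on $M$; they say nothing about $\nabla H^{-1/2}$. The paper itself records (Introduction, citing \cite{HZ}) that $H=\Delta+V_0/r^2$ does \emph{not} satisfy Gaussian upper bounds when $V_0$ may be negative, so the Coulhon--Duong hypotheses are simply unavailable for $H$. Moreover, since $H$ is homogeneous of degree $-2$, a fixed high/low-energy split does not isolate the difficulty; the decomposition that works is spatial (into $G_1,G_2,G_3$), and the Calder\'on--Zygmund bounds for the near-diagonal kernel $G_1$ are obtained in \cite{HL} by a genuine b-/scattering parametrix construction, not by a heat-kernel criterion. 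Your sufficiency argument is missing a substitute for that step.

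Second, the necessity argument rests on asymptotics with the two indicial branches interchanged, and the endpoints are misidentified. For $f$ supported in $\{r'\sim 1\}$ and projected onto $\phi_0$, formula \eqref{formula2} together with the $G_3$ bounds yields $H^{-1/2}f(r,y)\sim r^{-d/2-\mu_0}$ as $r\to\infty$: this is the \emph{decaying} indicial branch $r^{1-d/2-\mu_0}$, with one extra power of $r^{-1}$ from integrating $\lambda^{d-2}\,d\lambda$ up to $\lambda\sim r^{-1}$. Your claimed tail $r^{-(d-2)/2+\mu_0}$ is the growing branch and cannot occur in the kernel. Consequently this test function detects the \emph{lower} endpoint $p_0$, not the upper one; the upper endpoint is $p_1=d/\max(d/2-\mu_0,0)$, not $d/\max((d-2)/2-\mu_0,0)$, which is $p_0'$. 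To hit $p_1$ one instead sends $r\to 0$ against $f$ supported away from the tip: formula \eqref{formula1} gives $|\nabla H^{-1/2}f|\sim r^{\mu_0-d/2}$, whose $L^p$ norm on $(0,1)$ diverges exactly for $p\ge p_1$. Finally, the offset between the two thresholds is not produced by the potential altering the indicial equation; it arises because $\nabla_z$ acts only on the $z$-variable while the $\lambda$-integration always costs a power of the larger radius, an asymmetry that would be present for any such kernel.
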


The “only if” part of the above result, i.e., the range of unboundedness of $\nabla H^{-1/2}$, inherits ideas from Guillarmou and Hassell \cite{GH1}, who consider the Riesz transform for Schrödinger operators with no zero modes and no zero resonances on asymptotically conic manifolds and show that $\nabla H^{-1/2}$ cannot be bounded for $p$ outside the range \eqref{interval}. The case with zero modes or zero resonances is treated in \cite{GH2}.

The study of the Riesz transform can also be carried out from the point of view of Ricci curvature. Indeed, in \cite{bak87}, Bakry proved that on manifolds with non-negative Ricci curvature, \eqref{Rp} holds for all $1<p<\infty$. In fact, manifolds with conical ends and the connected sum of finitely many Euclidean spaces both satisfy a quadratic lower bound for the Ricci curvature:
\begin{equation}\label{QD}
    \mathrm{Ric}_x \ge - \frac{\epsilon^2}{\left( 1 + r(x) \right)^2} g_x,\quad \forall x\in M,
\end{equation}
where $r(x) = d(o,x)$ for some fixed point $o\in M$. In \cite{Carron}, Carron studied manifolds satisfying the condition \eqref{QD}, and gave a characterization of \eqref{Rp} in terms of a lower bound for volume growth, which we refer to as the reverse doubling condition:
\begin{equation}\label{RD}
    \frac{V(x,R)}{V(x,r)} \ge C \left(\frac{R}{r} \right)^\nu,\quad \nu>2.
\end{equation}
In particular, his result can be applied to the cases studied in \cite{HQLi,CCH}. Precisely, on a metric cone $M$, we have volume estimate $V(x,r) \sim r^d$ (hence \eqref{RD} holds with $\nu=d$). Moreover, it is clear that the cone metric implies \eqref{QD}. Then, \cite[Theorem~A]{Carron} guarantees that \eqref{Rp} holds on $M$ for $1<p<d$. Note that this result is sharp if $M$ has multiple ends.

In this note, we continue the study of the Riesz transform for Schrödinger operators on metric cones. We also consider a slightly modified, “left-lifted’’ version of the Riesz transform. For $f\in C^\infty(M)$, define
\[
    \nabla_H f := (\nabla f, f/r),
\]
viewed as a section of the product bundle $\Lambda^1(M)\times\Lambda^0(M)$, and set
\[
    |\nabla_H f| := \sqrt{|\nabla f|^2 + \left|f/r\right|^2}.
\]
For $1\le p\le \infty$, we define the seminorm
\begin{equation}\label{d_H}
    \|\nabla_H f\|_p^p := \int_M  \left(|\nabla f|^2 + \left|f/r\right|^2\right)^{\frac{p}{2}} d\mu  \sim\|\nabla f\|_p^p + \left\|f/r\right\|_p^p,
\end{equation}
and we say that the Riesz transform for the Schrödinger operator is bounded on $L^p$ if
\begin{equation}\label{RVp}\tag{$\textrm{RV}_p$}
    \|\nabla_H f\|_p  \le C \| H^{1/2} f \|_p, \quad \forall f\in C_c^\infty(M).
\end{equation}
The reason for introducing this notion is to study the following reverse inequality in a more thorough way:
\begin{equation}\label{RRVp}\tag{$\textrm{RRV}_p$}
    \| H^{1/2} f \|_p \le C \|\nabla_H f\|_p, \quad \forall f\in C_c^\infty(M).
\end{equation}

\subsection{Main results}

Before presenting our main results, let us first introduce some notation. Throughout the article, we write
\begin{align}\label{p0p1}
    p_0:= \frac{d}{\min\left( \frac{d+2}{2}+\mu_0, d \right)}, \quad 
    p_1:= \frac{d}{\max \left( \frac{d}{2} - \mu_0, 0 \right)},
\end{align}
where $\mu_0>0$ is the square root of the smallest eigenvalue of the operator
\[
    \Delta_Y + V_0(y) + \left(\frac{d-2}{2}\right)^2.
\]
Thus
\begin{align}\label{p0'p1'}
    p_0' = \frac{d}{\max\left(\frac{d-2}{2}-\mu_0, 0\right)},\quad 
    p_1' = \frac{d}{\min \left(\frac{d}{2}+\mu_0, d\right)}.
\end{align}

We recall that a function $f$ belongs to the Lorentz space $L^{p,q}$ ($0<p,q\le \infty$) if the quasi-norm
\begin{equation*} 
     \|f\|_{(p,q)}=  
    \begin{cases}
    \left( \displaystyle\int_0^{\infty} \left(t^{1/p} f^*(t)\right)^q \frac{dt}{t}   \right)^{1/q}, & q<\infty,\\[1em]
     \displaystyle\sup_{t>0} t^{1/p}f^*(t) = \sup_{\lambda>0} \lambda\, d_f(\lambda)^{1/p}, & q=\infty,
    \end{cases}
\end{equation*}
is finite, where $f^*$ denotes the decreasing rearrangement of $f$,
\[
    f^*(t) = \inf\{\lambda>0 : d_f(\lambda)\le t\},
\]
and $d_f$ is the usual distribution function,
\[
    d_f(\lambda) = \mu\bigl(\{x : |f(x)| >\lambda  \}\bigr).
\]

As observed in Theorem~\ref{thmL} and Theorem~\ref{thmHL}, \eqref{Rp} and \eqref{RVp} hold only on a finite interval of exponents. Therefore, it is natural to ask whether there exists a suitable endpoint estimate. From \cite{HQLi}, it is known that the Riesz transform $R$ on a metric cone is not even of weak type $(p,p)$ at the endpoint. Hence, as a natural candidate, one expects $R$ to be of restricted weak type bounded at the endpoints.

Our first result confirms this expectation.



\begin{mainthm}\label{thm1}
Let $d\ge 3$ and let $M = (0,\infty)_r \times Y$ be a metric cone of dimension $d$ with cross-section $Y$. Suppose $V_0$ is a smooth function on $Y$ satisfying \eqref{positivity}. Let $V = V_0/r^2$ be the corresponding inverse-square potential and let $H = \Delta + V$ be the associated Schr\"odinger operator. Then the Riesz transform $R = \nabla H^{-1/2}$ is of restricted weak type $(p_0,p_0)$. Moreover, if $p_1<\infty$, then $R$ is also of restricted weak type $(p_1,p_1)$, that is,
\begin{equation*}
    \| Rf\|_{(p_i,\infty)} \le C \| f \|_{(p_i,1)}
\end{equation*}
for $i=0,1$, where $p_i$ are defined in \eqref{p0p1}.

In addition, this estimate is sharp in the following sense: for $i=0,1$, the operator $R$ cannot be bounded from $L^{p_i,q}$ to $L^{p_i,\infty}$ for any $1<q\le \infty$.
\end{mainthm}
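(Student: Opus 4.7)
The plan is to build on the detailed kernel analysis of $\nabla H^{-1/2}$ carried out in \cite{HL} (following \cite{HQLi, GH1}) and upgrade the boundedness of Theorem~\ref{thmHL} to restricted weak type at the endpoints. Expanding $f = \sum_{n\ge 0} f_n(r)\phi_n(y)$ in the eigenbasis $\{\phi_n\}$ of $\Delta_Y + V_0 + (d-2)^2/4$ (with eigenvalues $\mu_n^2$, $0<\mu_0\le \mu_1\le \cdots$), the Schwartz kernel of $R$ decomposes into radial operators $T_n$ whose kernel $K_n(r, r')$ satisfies a Hardy-type pointwise bound
\[
    |K_n(r,r')|\lesssim (r+r')^{-d}\min(r/r',r'/r)^{\mu_n}
\]
(and similarly for the $f/r$ component when working in the $\nabla_H$ formulation). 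Since $\mu_n$ is strictly increasing in $n$, the critical endpoint exponents of $T_n$ relax as $n$ grows; the obstruction to $L^{p_0}$- and $L^{p_1}$-boundedness of $R$ therefore comes entirely from the $n=0$ mode, while for $n\ge 1$ the operator $T_n$ is already $L^{p_i}$-bounded.

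The core of the proof is a one-dimensional lemma: the Hardy-type operator $T_0$ with the above $n=0$ kernel is of restricted weak type $(p_i,p_i)$ on $\bigl((0,\infty), s^{d-1}ds\bigr)$ for $i=0,1$. I would prove it by testing on $f=\chi_{(a,b)}$ and estimating $d_{T_0 f}(\lambda)$ directly --- splitting the $s$-integral into the three natural regions $s\ll r$, $s\sim r$, $s\gg r$ and exploiting $\mu_0>0$ to control the tails --- to obtain exactly $d_{T_0 f}(\lambda)\lesssim \lambda^{-p_i}\|f\|_{p_i,1}^{p_i}$. Combined with the $L^{p_i}$-boundedness of the higher modes (Theorem~\ref{thmHL} applied mode-by-mode, together with a spherical Plancherel step to pass from $\ell^2$-coefficients to the $L^{p_i}$-norm on $M$), this yields the claimed restricted weak type estimate for $R$.

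For sharpness, I would argue by constructing explicit test functions that spread across multiple dyadic radial scales, adapting the non-weak-type constructions of \cite{HQLi, GH1} (which already rule out $q=p_i$) to rule out \emph{all} $q>1$. A natural candidate is a lacunary sum $f_N(r,y) = \sum_{k=1}^N a_k \chi_{\{2^k\le r\le 2^{k+1}\}}(r)\phi_0(y)$: the weights $a_k$ can be calibrated so that $\|f_N\|_{p_i,q}$ grows like a power of $\log N$ depending on $q$, while a direct computation using the explicit $n=0$ kernel shows that $\|Rf_N\|_{p_i,\infty}$ retains a strictly faster growth, witnessing the divergence of the operator norm as $N\to\infty$. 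The principal obstacle I anticipate lies in the one-dimensional lemma: the differentiation $\nabla$ inside $\nabla H^{-1/2}$ could \emph{a priori} destroy the weight $\min(r/s, s/r)^{\mu_0}$ that governs the endpoint behavior. One must track the small-argument asymptotics of the modified Bessel functions appearing in the radial resolvent of $H$ to verify that the weight survives with the correct exponent $\mu_0$ and that the resulting operator is of critical (rather than subcritical) Hardy type at $p_0$ and $p_1$ --- cancellations already exploited in \cite{HL} at the $L^p$ level but requiring more delicate bookkeeping to extract a sharp Lorentz estimate.
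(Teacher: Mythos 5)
Your mode-by-mode plan is in the right conceptual neighborhood, but the central pointwise kernel bound you propose is incorrect, and this is a genuine gap rather than a technicality. You claim
\[
|K_n(r,r')|\ \lesssim\ (r+r')^{-d}\min\bigl(r/r',\,r'/r\bigr)^{\mu_n}.
\]
For the off-diagonal part of $\nabla H^{-1/2}$ the correct pointwise bound (from the resolvent analysis in \cite{HL}; compare Lemma~\ref{leHL1} and the computations in the proof of Lemma~\ref{T2T3}) is
\[
|K_n(r,r')|\ \lesssim\ (rr')^{-d/2}\min\bigl(r/r',\,r'/r\bigr)^{\mu_n}
\ =\ r^{-d/2+\mu_n}\,r'^{-d/2-\mu_n}\quad(r\le r'),
\]
and this is strictly larger than your bound by a factor $(r'/r)^{d/2}$ when $r\ll r'$. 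The discrepancy matters: feeding your bound into the Hardy–Littlewood–P\'olya criterion (Lemma~\ref{leHL3}) would yield $L^p$-boundedness of the $n=0$ piece for \emph{all} $1<p<\infty$, contradicting the known unboundedness of $R$ outside $(p_0,p_1)$ (Theorem~\ref{thmHL}). In other words, your kernel is too small, so any mapping estimate derived from it does not prove anything about the true operator. The correct $(rr')^{-d/2}$ factor (rather than $(r+r')^{-d}$) is exactly what makes $p_0$ and $p_1$ the critical exponents, and it is also what forces the Lorentz-scale loss from $L^{p_i,p_i}$ down to $L^{p_i,1}$.

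A second gap is the proposed ``spherical Plancherel step to pass from $\ell^2$-coefficients to the $L^{p_i}$-norm.'' Plancherel on $Y$ controls $\ell^2$ sums of mode coefficients, and this does not transfer to an $L^{p}(M)$ or Lorentz bound for $p\ne 2$. The paper sidesteps this entirely: rather than decomposing mode by mode and resumming, it works with the parametrix decomposition $R=R_1+R_2+R_3$ (near-diagonal plus two off-diagonal pieces), treats $R_1$ as a Calder\'on--Zygmund operator, and for $R_2,R_3$ uses the kernel bounds already summed over all modes (absorbed into a single $\mu_0$-weighted bound via Lemma~\ref{leHL2} and H\"ormander's $L^\infty$ estimate). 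The restricted weak type estimate at the endpoint is then obtained by a single Lorentz-H\"older application, $|R_3 f(r,y)|\lesssim r^{-\mu_0-d/2-1}\|f\|_{(p_0,1)}\|H_r\|_{(p_0',\infty)}$, observing that the factor $\|H_r\|_{(p_0',\infty)}$ is uniformly bounded and $r^{-\mu_0-d/2-1}\in L^{p_0,\infty}$. If you repair the kernel bound, your one-dimensional Hardy-type lemma would likely go through along these lines, but the reassembly across modes must be done via uniform pointwise kernel bounds, not Plancherel.

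On sharpness your lacunary construction $f_N=\sum_k a_k\chi_{\{2^k\le r\le 2^{k+1}\}}\phi_0$ is a reasonable discrete analogue of what the paper does; the paper instead takes the single continuum function $f(z')=u_0(y')\,r'^{\mu_0-d/2}(1+|\log r'|)^{-s}\eta(r')$ with $1/q<s\le1$, which gives the cleaner conclusion $|Rf(z)|=+\infty$ a.e.\ while $f\in L^{p_1,q}$. The one missing ingredient in your sketch, which the paper supplies, is the reduction to finitely many modes: you need to show that the tail $\sum_{j\ge J}$ of the mode expansion is $L^p$-bounded for all $p$ (the paper uses Lemma~\ref{leHL2} together with H\"ormander's $L^\infty$ estimate and Weyl's law) before you can isolate the $\mu_0$ contribution as the sole source of divergence.
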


Note that the above result is consistent with the result in \cite{He1}, which in fact provides an even sharper Lorentz-type estimate in the framework of manifolds with ends.

Next, we investigate the reverse inequality \eqref{RRVp}, which is the main motivation of this article. The reason for adding the term $\left\| \frac{f}{r} \right\|_p$ on the right-hand side of \eqref{RRVp} is the identity:
\begin{equation*}
 \| H^{1/2}f\|_2^2 
 = \left\langle H^{1/2}f, H^{1/2}f \right\rangle 
 = \left\langle Hf, f \right\rangle 
 = \|\nabla f\|_2^2 + \left\langle \frac{V_0 f}{r}, \frac{f}{r} \right \rangle.
\end{equation*}
In fact, a sharper version,
\begin{equation*}
    \|H^{1/2}f\|_p \le C \|\nabla f\|_p,
\end{equation*}
can be obtained easily by using a Hardy-type inequality at the cost of losing information at a single point (see Corollary~\ref{cor1} below).

It is well known that, on a complete Riemannian manifold, a standard duality argument shows that \eqref{Rp} implies $(\textrm{RR}_{p'})$, whereas the converse is not true in general; see \cite{CD}. We show in Section~\ref{sec2} that this duality property still holds in the setting of metric cones with potential: that is, we prove that \eqref{RVp} implies $(\textrm{RRV}_{p'})$ for some range of $p$ (see Lemma~\ref{lemma2} below). Thus, with the (formally) expected equivalence of \eqref{RVp} and $(\textrm{RRV}_{p'})$ in mind, one might conjecture that the precise range for the boundedness of the reverse inequality should be
\begin{equation*}
    p_1' < p < p_0'.
\end{equation*}
However, this presumed equivalence between \eqref{Rp} and $(\textrm{RR}_{p'})$ has been shown to fail in many settings. For manifolds satisfying \eqref{QD}, see \cite{HeQD}; for the connected sum, see \cite{He2}; and for the Dirichlet Laplacian outside a bounded domain, see \cite{JL,KVZ}.

On a metric cone, suppose that $M$ has only one end (i.e.\ the cross section $Y$ is connected). Then the question of \eqref{RRp} falls within the scope of \cite[Theorem~0.7]{AC}. Indeed, under the assumptions \eqref{doubling} and \eqref{Pq} (for all $q\ge 1$), \eqref{RRp} holds for all $1<p<\infty$. When $Y$ is disconnected (so $M$ has multiple ends), the Poincaré inequality may fail (cf.\ \cite{GrSa}) and the general method of \cite{AC} does not apply. Note that $M$ satisfies \eqref{QD}. A recent article \cite{HeQD} shows that \eqref{RRp} is actually equivalent to Hardy's inequality for $p$ small. As a result, \eqref{RRp} holds on $M$ for all $1<p<\infty$ (see Lemma~\ref{Hardy} below). 

The problem becomes even more subtle once we add a possibly negative potential to the Laplacian, even if we only consider inverse-square type potentials. Indeed, in the discussion of the previous paragraph and in the special cases considered in \cite{He2,KVZ}, a Gaussian-type upper bound for the heat kernel is always (at least implicitly) assumed. In contrast, for the Schr\"odinger operator on a metric cone, there is no hope of obtaining such a bound for the heat kernel $e^{-tH}(z,z')$, and a recent article \cite{HZ} confirms this expectation.


Moreover, this fact also leads to another interesting phenomenon: there is a lower threshold for the $L^p$-boundedness of the Riesz transform. Reformulated in terms of the reverse inequality, this yields an upper threshold for the reverse Riesz transform. It has been verified in \cite{He2,HeQD} that it is possible to relax the lower threshold for the reverse inequality (that is, the upper threshold for the validity of \eqref{Rp}) by using the so-called \emph{harmonic annihilation method}. One might therefore hope that the upper threshold could also be relaxed by further developing this technique. Unfortunately, the next theorem shows that, in the setting of a metric cone with potential, this upper threshold is in fact sharp and cannot be improved. It turns out that the two methods introduced in Subsection~\ref{sec1.3} below—either eliminating the harmonic leading coefficient from outside (via a bilinear form) or from inside (by writing $H^{1/2}f = H^{-1/2}Hf$)—yield exactly the same range of $p$ for which \eqref{RRVp} holds.

Nevertheless, in this article, we inherit the method of \cite{HL} and examine the explicit formula for the off-diagonal part of the kernel $(H+1)^{-1}(z,z')$. By applying the \emph{harmonic annihilation method}, we obtain the following result.



\begin{mainthm}\label{thm2}
Let $d\ge 3$ and let $M = (0,\infty)_r \times Y$ be a metric cone of dimension $d$ with cross-section $Y$. Suppose $V_0$ is a smooth function on $Y$ satisfying \eqref{positivity}. Let $V = V_0/r^2$ be the corresponding inverse-square potential and let $H = \Delta + V$ be the associated Schr\"odinger operator. Suppose $1<p<\infty$. Then the reverse inequality \eqref{RRVp} holds if and only if 
\begin{align}\label{interval2}
    \frac{d}{\min\left( \frac{d+4}{2}+\mu_0, d\right)} < p < p_0' = \frac{d}{\max\left(\frac{d-2}{2}-\mu_0, 0 \right)}.
\end{align}
Equivalently,
\begin{align*}
    \|H^{1/2}f\|_p \le C \| \nabla_H f\|_p
\end{align*}
holds if and only if $p$ belongs to the range \eqref{interval2}.

In addition, the following endpoint estimate:
\begin{equation}\label{RRend}
    \| H^{1/2}f\|_{(p,\infty)} \le C \| \nabla_H f\|_{(p,q)},
\end{equation}
holds for $q=1$ and fails for every $1<q\le \infty$, where $p=\frac{d}{\min\left( \frac{d+4}{2}+\mu_0, d\right)}$ or $p=p_0'$ if $p_0'<\infty$. 
\end{mainthm}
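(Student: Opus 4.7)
The plan is to split the proof into three pieces: sufficiency of the range~\eqref{interval2}, sharpness at each endpoint, and the Lorentz endpoint~\eqref{RRend}. For the sufficient direction the main technique, as announced in Subsection~\ref{sec1.3}, is the \emph{harmonic annihilation method} applied to the explicit resolvent kernel from~\cite{HL}; for the subrange $p_1'<p<p_0'$ this can also be obtained more cheaply by duality, which I will use as a consistency check and as the key input for the Lorentz endpoint step.

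\emph{Core sufficient direction via harmonic annihilation.} Starting from the subordination identity
\begin{equation*}
 H^{1/2}f(z) = \frac{1}{\pi}\int_0^\infty \lambda^{-1/2}(H+\lambda)^{-1}(Hf)(z)\,d\lambda,
\end{equation*}
I would insert the Hassell--Lin separated-variable representation of the resolvent kernel. With $z=(r,y)$ and $\{\phi_\nu\}$ the $L^2(Y)$-orthonormal eigenbasis of $\Delta_Y+V_0+(d-2)^2/4$, indexed by eigenvalues $\nu^2$ (so that $\nu_0=\mu_0$), the kernel has the form
\begin{equation*}
 (H+\lambda)^{-1}\bigl((r,y),(r',y')\bigr)=\sum_{\nu}\phi_\nu(y)\phi_\nu(y')\,K_\nu(r,r',\lambda),
\end{equation*}
with $K_\nu$ a modified Bessel radial kernel. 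Integrating by parts in $z'$ against $Hf=-\mathrm{div}\,\nabla f+V_0 f/r^{2}$ trades one factor of $\nabla_H f$ against one derivative landing on the kernel, so that the problem is recast as an $L^p\to L^p$ kernel estimate applied to $\nabla_H f$. The harmonic annihilation step then consists in subtracting, angular mode by angular mode, the leading $\nu=\mu_0$ contribution; this cancellation gains an extra factor $r^{\mu_0}$ near the cone tip, which is precisely what pushes the admissible lower exponent from $p_1'$ down to $d/\min\bigl(\tfrac{d+4}{2}+\mu_0,\,d\bigr)$. A Schur-type off-diagonal bound on the improved radial kernel, uniform in both $\lambda$ and $\nu$, then permits Minkowski's inequality in $\lambda$ and yields \eqref{RRVp} throughout~\eqref{interval2}. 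I expect this to be the main technical obstacle: the Bessel asymptotics in the regimes $r\lesssim r'$ and $r\gtrsim r'$ must be tracked uniformly in $\nu$, the $\nu$-sum must be controlled (via Weyl-type estimates on $Y$), and the cancellation from the ground-state subtraction must be quantified sharply enough to saturate the endpoint.

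\emph{A duality shortcut for $p_1'<p<p_0'$.} In this subrange, and for the formulation of Lemma~\ref{lemma2}, the cleaner route is to write $\|H^{1/2}f\|_p=\sup_{\|g\|_{p'}=1}\langle H^{1/2}f,g\rangle$ and expand via the quadratic form of $H$,
\begin{equation*}
 \langle H^{1/2}f,g\rangle=\langle Hf,H^{-1/2}g\rangle=\langle \nabla f,\nabla H^{-1/2}g\rangle+\Bigl\langle \tfrac{V_0 f}{r},\tfrac{H^{-1/2}g}{r}\Bigr\rangle.
\end{equation*}
H\"older together with the $L^\infty$ boundedness of $V_0$ on $Y$ reduces to $\nabla H^{-1/2}\colon L^{p'}\to L^{p'}$ (Theorem~\ref{thmHL}, valid exactly for $p_0<p'<p_1$) and a Hardy-type bound $r^{-1}H^{-1/2}\colon L^{p'}\to L^{p'}$, which I would derive from the same resolvent kernel estimates in~\cite{HL}. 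This recovers the duality window and confirms that the harmonic annihilation analysis reproduces the correct upper endpoint $p_0'$.

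\emph{Sharpness and Lorentz endpoints.} For the sharpness of each endpoint I would test~\eqref{RRVp} against functions $f_R(r,y)=\chi(r/R)\,r^{-\alpha}\phi_0(y)$, where $\phi_0$ is the eigenfunction corresponding to $\mu_0^2$ and $\alpha$ is placed exactly at the critical exponent of the endpoint in question. Using the same eigenfunction expansion of the resolvent kernel, the leading asymptotics of $H^{1/2}f_R$ can be extracted and $\|H^{1/2}f_R\|_p/\|\nabla_H f_R\|_p$ shown to diverge as $R\to 0$ or $R\to\infty$, in the spirit of the sharpness analyses of~\cite{GH1,HL}. Finally, the restricted weak-type bound~\eqref{RRend} at $q=1$ is obtained by feeding the restricted weak-type endpoint estimate for $\nabla H^{-1/2}$ supplied by Theorem~\ref{thm1} into the bilinear form of the duality step and then real-interpolating against the interior strong bound; the failure for $q>1$ follows by concentrating the sharpness test functions on a thin radial annulus, so that the pointwise blow-up transfers into Lorentz-norm blow-up.
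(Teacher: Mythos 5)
Your outline captures the broad strokes — subordination, separation of variables on $Y$, some form of harmonic cancellation, duality for a subrange, and test-function counterexamples — but it diverges from the paper's argument in substance, and one of the divergences is a genuine gap. The paper's harmonic annihilation is not a ``ground-state subtraction'': one does not remove the $\nu=\mu_0$ mode. Rather, one applies $H$ directly to the off-diagonal kernel $G_2^\lambda$ (or $G_3^\lambda$) in separated variables, and then, using the eigenfunction relation $(\Delta_Y+V_0+(\tfrac{d-2}{2})^2)u_j=\mu_j^2 u_j$, the radial ODE satisfied by $\mathcal I_{2\mu_j+2}$ cancels the angular contribution \emph{in every mode $j$}, leaving a main term that is simply $-\lambda^2 G_2^\lambda$ plus cutoff-derivative errors supported near the diagonal (Lemma~\ref{HG}). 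The gain is the extra factor $\lambda^2$, which after the $\lambda$-integration turns into two extra powers of radial decay uniformly in $j$; this is what lowers the threshold from $p_1'$ to $d/\min(\tfrac{d+4}{2}+\mu_0,\,d)$. There is no extra $r^{\mu_0}$ near the tip, and the improvement does not single out the ground mode.

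The concrete gap is your Lorentz endpoint step. You propose to get \eqref{RRend} at $q=1$ by duality, feeding the restricted weak-type estimate of Theorem~\ref{thm1} for $\nabla H^{-1/2}$ into $\langle H^{1/2}f,g\rangle$ and then real-interpolating. But Theorem~\ref{thm1} gives restricted weak type at $p_0$ and $p_1$; by duality on Lorentz spaces $(L^{p,1})^*=L^{p',\infty}$, this produces endpoint bounds for the reverse inequality only at $p=p_0'$ and $p=p_1'$. Since $d/\min(\tfrac{d+4}{2}+\mu_0,\,d) < p_1'$ strictly whenever $\mu_0<d/2$, your route cannot reach the actual lower Lorentz endpoint of Theorem~\ref{thm2}; the improvement below $p_1'$ is exactly the output of the harmonic annihilation and must be proved directly (the paper does this through the kernel estimate for $\mathcal S = \int r' H_{z'}G_3^\lambda\,\lambda^{d-2}d\lambda$ in Lemma~\ref{S}, which yields the restricted weak-type bound at the lower endpoint without passing through Theorem~\ref{thm1}). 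Relatedly, your sharpness/failure-for-$q>1$ plan is underspecified: at the critical power $r^{-\alpha}$, $\|\nabla_H f_R\|_p$ itself diverges as $R\to 0$ or $R\to\infty$, so one has to compare divergence rates carefully; and ``concentrating on a thin radial annulus'' does not by itself produce the logarithmic separation between $L^{p,1}$ and $L^{p,q}$ for $q>1$. The paper instead takes $f(r,y)=u_0(y)\,r^{-\alpha}(1+|\log r|)^{-s}\eta(r)$ (and a truncated version $f_R$ at the upper endpoint) with $s\in(1/q,1]$: this lies in $L^{p,q}$ for any $q>1$, has $\|\nabla_H f\|_{(p,q)}<\infty$, and makes the dominant part of $H^{1/2}f$ blow up a.e.\ after extracting finitely many angular modes and using the explicit Bessel integral formula $6.576.5$ of \cite{GR}. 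Without a log-weighted test function (or an equivalent device) the sharpness on the Lorentz scale does not follow.
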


\begin{remark}
It is clear that
\begin{equation*}
    \frac{d}{\min\left( \frac{d+4}{2}+\mu_0, d\right)} \le p_1' = \frac{d}{\min\left( \frac{d}{2}+\mu_0, d\right)}.
\end{equation*}
In other words, the lower threshold in the “presumed dual range’’ for \eqref{RRVp} has been relaxed. Note that if $V_0 \ge 0$, then $\mu_0 > \frac{d-2}{2}$, and hence $p_0 = 1$, so that $p_0'=\infty$. Moreover, in this case the lower threshold in \eqref{interval2} equals $1$, which shows that \eqref{RRVp} holds for all $1<p<\infty$. 

On the other hand, if $V_0$ is negative (so $0<\mu_0\le \frac{d-2}{2}$), then $p_0$ always lies in $(1,2)$, and the upper threshold, i.e., $p_0'$, in \eqref{interval2} remains an insurmountable barrier. In addition, if $\frac{d-4}{2}<\mu_0\le \frac{d-2}{2}$, then the lower threshold in \eqref{interval2} is again $1$, whereas for sufficiently negative potentials (i.e., for $0<\mu_o\le \frac{d-4}{2}$) the lower threshold also lies strictly between $1$ and $2$. See table below.

\begin{table}[htbp]
\centering
\caption{Range of $p$ in \eqref{RRVp} depending on $\mu_0$.}
\begin{tabular}{ccc}
\hline
Assumption on $\mu_0$ & Sign / size of $V_0$ & Admissible $p$ in \eqref{RRVp} \\
\hline
$\mu_0 > \dfrac{d-2}{2}$ 
  & $V_0 \ge 0$ 
  & $1 < p < \infty$ \\[0.6em]

$\dfrac{d-4}{2} < \mu_0 \le \dfrac{d-2}{2}$ 
  & $V_0 < 0$ (mildly negative) 
  & $1 < p < \dfrac{2d}{d-2-2\mu_0}$ \\[0.9em]

$0 < \mu_0 \le \dfrac{d-4}{2}$ 
  & $V_0$ sufficiently negative 
  & \(
     \dfrac{2d}{d+4+2\mu_0} < 
     p < 
      \dfrac{2d}{d-2-2\mu_0}
    \) \\[0.4em]
\hline
\end{tabular}
\end{table}

\end{remark}


As a consequence of Hardy’s inequality (see Lemma~\ref{Hardy} below), the above result can be “improved’’ to the following.

\begin{corollary}\label{cor1}
Let $p\in (1,\infty) \setminus \{d\}$. Under the assumptions of Theorem~\ref{thm2}, the reverse inequality
\begin{equation*}
    \|H^{1/2}f\|_p \le C \|\nabla f\|_p,\quad \forall f\in C_c^\infty(M),
\end{equation*}
holds if and only if $p$ lies in the range \eqref{interval2}.
\end{corollary}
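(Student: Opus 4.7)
The proof is essentially a direct consequence of Theorem~\ref{thm2} combined with Hardy's inequality, with the restriction $p\ne d$ appearing precisely because Hardy's inequality degenerates at that exponent.

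For the sufficiency direction, suppose $p\in(1,\infty)\setminus\{d\}$ lies in the range \eqref{interval2}. Theorem~\ref{thm2} yields
\begin{equation*}
    \|H^{1/2}f\|_p \le C\,\|\nabla_H f\|_p \sim \|\nabla f\|_p + \|f/r\|_p,
\end{equation*}
so it only remains to absorb the Hardy term $\|f/r\|_p$ into $\|\nabla f\|_p$. This is precisely the content of Lemma~\ref{Hardy}, which provides the sharp Hardy inequality $\|f/r\|_p \le C_p \|\nabla f\|_p$ on the metric cone for every $p\in(1,\infty)\setminus\{d\}$. Combining the two bounds gives the claimed estimate $\|H^{1/2}f\|_p \le C\,\|\nabla f\|_p$ for all $f\in C_c^\infty(M)$.

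For the necessity direction, suppose the inequality $\|H^{1/2}f\|_p \le C\,\|\nabla f\|_p$ holds for all $f\in C_c^\infty(M)$. From the pointwise definition $|\nabla_H f|^2 = |\nabla f|^2 + |f/r|^2$ we get the trivial bound $|\nabla f| \le |\nabla_H f|$, whence $\|\nabla f\|_p \le \|\nabla_H f\|_p$. Consequently, the hypothesized inequality implies $\|H^{1/2}f\|_p \le C\,\|\nabla_H f\|_p$, i.e.\ \eqref{RRVp} holds. Theorem~\ref{thm2} then forces $p$ to lie in the interval \eqref{interval2}.

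There is no real obstacle here beyond correctly invoking the Hardy inequality; the only subtlety is remembering that the Hardy constant blows up at the critical exponent $p=d$, which is why the corollary must exclude this single value. Note that this exclusion has no bearing on the admissible range \eqref{interval2} itself, because in all cases \eqref{interval2} may or may not contain $d$ depending on the size of $\mu_0$—the corollary simply asserts equivalence on the set $(1,\infty)\setminus\{d\}$ rather than on the entire line.
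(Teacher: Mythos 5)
Your proof is correct and follows exactly the route the paper intends: the paper's own proof consists of the single line ``combine Theorem~\ref{thm2} with Lemma~\ref{Hardy}.'' You have correctly filled in both directions—using Hardy's inequality to absorb $\|f/r\|_p$ into $\|\nabla f\|_p$ for the sufficiency, and the trivial pointwise bound $|\nabla f|\le |\nabla_H f|$ to reduce the necessity to the ``only if'' half of Theorem~\ref{thm2}—and you have correctly located the source of the excluded exponent $p=d$ in the failure of the one-dimensional weighted Hardy inequality.
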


Meanwhile, we have the following consequence for \eqref{RVp}.

\begin{corollary}\label{cor2}
Under the assumptions of Theorem~\ref{thmHL}, \eqref{RVp} holds if and only if $p$ belongs to the range \eqref{interval}.
\end{corollary}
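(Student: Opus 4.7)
\emph{Proof proposal.} The ``only if'' direction is immediate: since $\|\nabla f\|_p\le \|\nabla_H f\|_p$ by \eqref{d_H}, any bound of the form \eqref{RVp} forces \eqref{Rp}, and Theorem~\ref{thmHL} then places $p$ in the interval \eqref{interval}.

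For the converse, fix $p$ in \eqref{interval}. Using the equivalence in \eqref{d_H}, the target \eqref{RVp} splits into the two estimates $\|\nabla f\|_p\lesssim \|H^{1/2}f\|_p$ and $\|f/r\|_p\lesssim \|H^{1/2}f\|_p$. The first is exactly Theorem~\ref{thmHL}, so the whole question reduces to the Hardy-type bound
\[
\|f/r\|_p\lesssim \|H^{1/2}f\|_p,\qquad \forall\,f\in C_c^\infty(M). \tag{$*$}
\]
Whenever $p\ne d$, my plan is to chain the cone Hardy inequality $\|f/r\|_p\lesssim\|\nabla f\|_p$ (Lemma~\ref{Hardy}) with Theorem~\ref{thmHL}:
\[
\|f/r\|_p\;\lesssim\;\|\nabla f\|_p\;\lesssim\;\|H^{1/2}f\|_p,
\]
which handles every exponent in \eqref{interval} except possibly $p=d$ itself.

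The remaining case is $p=d$, which lies inside \eqref{interval} precisely when $\mu_0>(d-2)/2$; in that regime one automatically has $p_0=1$ and $p_1>d$, so I can pick auxiliary exponents $q_1\in(1,d)$ and $q_2\in(d,p_1)$, both contained in $\eqref{interval}\setminus\{d\}$. The previous step then delivers $L^{q_1}$ and $L^{q_2}$ boundedness of the \emph{linear} operator $T:=r^{-1}H^{-1/2}$, and Riesz--Thorin interpolation immediately upgrades this to $L^d$ boundedness, which is exactly $(*)$ at $p=d$.

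I expect the main technical point to be the degeneracy of Hardy's inequality at $p=d$; the interpolation workaround above sidesteps this provided Lemma~\ref{Hardy} delivers the full range $(1,\infty)\setminus\{d\}$. Should Lemma~\ref{Hardy} turn out to apply only in some smaller subrange, my fallback plan is to estimate $r^{-1}H^{-1/2}$ directly from the off-diagonal expansion of $(H+1)^{-1}(z,z')$ used in the proof of Theorem~\ref{thmHL}: the weight $r^{-1}$ has the same homogeneity as the factor $\nabla$ that produces the Riesz transform, so the same range-of-$p$ analysis should control $r^{-1}H^{-1/2}$ on the whole of \eqref{interval}.
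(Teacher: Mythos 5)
Your proposal is correct, but it takes a genuinely different route from the paper's. The paper reduces the corollary, via \eqref{d_H} and Theorem~\ref{thmHL}, to the $L^p$-boundedness of $T=r^{-1}H^{-1/2}$ on the interval \eqref{interval}, and then proves that boundedness directly from the kernel estimates of Lemma~\ref{leHL1} through the decomposition $T=T_1+T_2+T_3$ (Lemma~\ref{T1} giving $T_1$ bounded on all of $(1,\infty)$, Lemma~\ref{T2T3} giving $T_2$ bounded on $(1,p_1)$ and $T_3$ on $(p_0,\infty)$). You instead obtain the same reduction and then control $\|f/r\|_p$ by chaining the weighted Hardy inequality of Lemma~\ref{Hardy} with Theorem~\ref{thmHL}, filling the single excluded exponent $p=d$ by Riesz--Thorin interpolation between two auxiliary exponents $q_1<d<q_2$ inside the interval; your observation that $p=d$ can lie in \eqref{interval} only when $\mu_0>(d-2)/2$, forcing $p_0=1$ and $p_1>d$ and thus leaving room for $q_1,q_2$, is exactly right. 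Your argument is softer and avoids any new kernel analysis, at the cost of needing a density step to upgrade the inequality $\|f/r\|_q\lesssim\|H^{1/2}f\|_q$ for $f\in C_c^\infty(M)$ to operator boundedness of $T$ on $L^{q_i}$ before interpolating — one must know (or argue) that $H^{1/2}C_c^\infty(M)$ is dense in $L^{q_i}$; the paper's kernel-level approach establishes the operator bounds outright and so never meets this question. The fallback you sketch in the last paragraph is in fact precisely the route the paper takes.
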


Immediately, we get the following norm equivalent property.

\begin{corollary}\label{Ep}
Under the assumptions of Theorem~\ref{thmHL}, the inequality:
\begin{equation}
    C_1 \| H^{1/2} f \|_p \le  \| \nabla_H f \|_p \le C_2 \| H^{1/2} f \|_p, \quad \forall f\in C_c^\infty(M),
\end{equation}
holds if and only if 
\begin{equation*}
    \max \left( \frac{d}{\min \left(\frac{d+4}{2}+\mu_0, d\right)}, p_0 \right) < p < \min \left( p_0', p_1 \right).
\end{equation*}
\end{corollary}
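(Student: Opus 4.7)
The plan is to combine Corollary~\ref{cor2} with Theorem~\ref{thm2}. The two-sided estimate asserted in Corollary~\ref{Ep} is, by definition, the conjunction of the forward inequality $\|\nabla_H f\|_p \le C_2 \|H^{1/2}f\|_p$, which is precisely \eqref{RVp}, together with the reverse inequality $\|H^{1/2}f\|_p \le C_1^{-1}\|\nabla_H f\|_p$, which is precisely \eqref{RRVp}. Consequently, the norm equivalence holds for exactly those exponents $p$ belonging to the intersection of the two ranges governing \eqref{RVp} and \eqref{RRVp}.

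By Corollary~\ref{cor2}, \eqref{RVp} is valid if and only if $p$ lies in the interval \eqref{interval}, namely $(p_0, p_1)$. By Theorem~\ref{thm2}, \eqref{RRVp} is valid if and only if $p$ lies in the interval \eqref{interval2}. Intersecting these two open intervals immediately yields
\[
    \max\left(\frac{d}{\min\left(\frac{d+4}{2}+\mu_0,\, d\right)},\, p_0\right) < p < \min(p_0',\, p_1),
\]
which is the announced range. The ``if'' direction then follows by concatenating the two inequalities with their respective constants. Conversely, if $p$ lies outside this intersection, then at least one of \eqref{RVp} or \eqref{RRVp} must fail by the sharpness clauses of the two cited results, whence the norm equivalence itself cannot hold either.

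In short, there is no substantive analytic obstacle at this step: Corollary~\ref{Ep} is purely a logical consequence of the two preceding characterizations. The only small point worth recording is that the intersection is always a non-degenerate interval. Indeed, since $\mu_0>0$ and $d\ge 3$, the four threshold exponents satisfy
\[
    p_0,\ \frac{d}{\min\left(\frac{d+4}{2}+\mu_0,\, d\right)} \;<\; 2 \;<\; p_0',\ p_1,
\]
so that $p=2$ always lies in the prescribed range, confirming that the statement is non-vacuous regardless of the size or sign of $V_0$ allowed by \eqref{positivity}.
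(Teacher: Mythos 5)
Your proposal is correct and follows exactly the paper's argument: Corollary~\ref{Ep} is obtained by intersecting the range of validity of \eqref{RVp} from Corollary~\ref{cor2} with that of \eqref{RRVp} from Theorem~\ref{thm2}, both of which are sharp characterizations. The added observation that $p=2$ always lies in the resulting interval (so it is never empty) is a correct and harmless bonus.
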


\begin{proof}[Proof of Corollary~\ref{Ep}]
This is a direct consequence of Corollary~\ref{cor2} and Theorem~\ref{thm2}.
\end{proof}

By applying our theorem to the special case where $V_0 = c \ne 0$, we obtain the following result.

\begin{corollary}\label{cor3}
Let $d\ge 3$ and let $M = (0,\infty)_r \times Y$ be a metric cone of dimension $d$ with cross section $Y$. Suppose $c>- \left(\frac{d-2}{2}\right)^2$ and $c\ne 0$. Then the reverse inequality \eqref{RRVp} for the Schrödinger operator $H = \Delta+ \frac{c}{r^2}$ holds if and only if
\begin{equation*}
    \frac{2d}{\min \left( d+4+\sqrt{(d-2)^2+4c}, 2d \right)} < p < \frac{2d}{\max\left( d-2-\sqrt{(d-2)^2 + 4c}, 0 \right)}.
\end{equation*}
In addition, a similar endpoint estimate as in Theorem~\ref{thm2} holds.
\end{corollary}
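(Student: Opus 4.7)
The plan is to apply Theorem~\ref{thm2} directly to the constant potential $V_0 \equiv c$ and to translate the abstract description of $\mu_0$ into an explicit formula in $c$ and $d$. The bulk of the work is algebraic simplification; there is essentially no analytic obstacle.

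First I would verify the hypotheses of Theorem~\ref{thm2}. With $V_0 \equiv c$, the operator $\Delta_Y + c + \left(\frac{d-2}{2}\right)^2$ is just $\Delta_Y$ shifted by the constant $c + (d-2)^2/4$. Since $Y$ is a closed Riemannian manifold, the smallest eigenvalue of $\Delta_Y$ is $0$ (attained by constants), so the spectrum of $\Delta_Y + c + (d-2)^2/4$ lies in $[c + (d-2)^2/4,\infty)$. The positivity hypothesis \eqref{positivity} therefore reduces to $c + (d-2)^2/4 > 0$, which is exactly $c > -(d-2)^2/4$. The condition $V_0 \not\equiv 0$ becomes $c \neq 0$, matching the assumption of the corollary.

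Next I would compute $\mu_0$. By the previous step, the smallest eigenvalue of $\Delta_Y + c + (d-2)^2/4$ is precisely $c + (d-2)^2/4$, so
\[
\mu_0 = \sqrt{c + (d-2)^2/4} = \tfrac{1}{2}\sqrt{(d-2)^2 + 4c}.
\]
It is worth noting that $c \neq 0$ ensures $\mu_0 \neq (d-2)/2$, which is the relevant nondegeneracy used implicitly in Theorem~\ref{thm2} (the same nondegeneracy hidden in the hypothesis $V_0 \not\equiv 0$ of Theorem~\ref{thmHL}).

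Finally, I would substitute this value of $\mu_0$ into the thresholds of \eqref{interval2} and clear denominators. For the lower endpoint,
\[
\frac{d}{\min\!\left(\frac{d+4}{2}+\mu_0,\,d\right)} = \frac{2d}{\min\!\left(d+4+\sqrt{(d-2)^2+4c},\,2d\right)},
\]
and analogously for the upper endpoint
\[
\frac{d}{\max\!\left(\frac{d-2}{2}-\mu_0,\,0\right)} = \frac{2d}{\max\!\left(d-2-\sqrt{(d-2)^2+4c},\,0\right)}.
\]
This yields the interval claimed in Corollary~\ref{cor3}. The endpoint estimate \eqref{RRend} of Theorem~\ref{thm2} transfers without modification under the same substitution, giving the advertised endpoint statement. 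Since every step is a direct specialization, no genuine obstacle is expected; the only thing to be careful about is correctly handling the $\min/\max$ with the thresholds $d$ and $0$, which correspond to the geometric cases $\mu_0 \geq (d+4)/2$ is impossible (so the $\min$ with $d$ is active only when $\sqrt{(d-2)^2+4c} \geq d-4$, i.e., $c \geq -3$ when $d=4$, etc.) and $\mu_0 \geq (d-2)/2$ (equivalently $c \geq 0$), in which the $\max$ collapses to $0$ and the upper endpoint becomes $+\infty$.
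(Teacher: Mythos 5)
Your proof is correct and matches the paper's intended direct specialization: since $Y$ is closed, the bottom eigenvalue of $\Delta_Y+c+\left(\frac{d-2}{2}\right)^2$ is $c+\left(\frac{d-2}{2}\right)^2$, so $\mu_0=\tfrac12\sqrt{(d-2)^2+4c}$, and clearing denominators in \eqref{interval2} yields the stated interval, with the endpoint statement transferring verbatim from Theorem~\ref{thm2}. (One minor arithmetic slip in your closing aside: for $d=4$ the condition $\sqrt{(d-2)^2+4c}\ge d-4$ reads $\sqrt{4+4c}\ge 0$, which holds automatically for all admissible $c>-1$; the figure $c\ge -3$ corresponds to $d=6$, not $d=4$, but this has no bearing on the argument.)
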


\subsection{Motivation}\label{sec1.3}

In this subsection, we discuss the idea behind the proof of Theorem~\ref{thm2}, namely the
so–called \emph{harmonic annihilation method}; see also
\cite{He2,HeQD}.  From \cite{CCH}, it is known that on a
compactification $\overline{M}$ of the connected sum
$M=\mathbb{R}^n \# \mathbb{R}^n$ ($n\ge 3$), the Riesz potential has
an asymptotic expansion in the off–diagonal region
\begin{equation}\label{asy1}
    \Delta^{-1/2}(x,y) \sim \sum_{j=n-1}^\infty a_j(x)\, |y|^{-j},
    \qquad x\in \overline{M},\quad y \to \partial \overline{M},
\end{equation}
where the leading coefficient $a_{n-1}$ is a bounded, non-trivial
harmonic function on $\overline{M}$.  By the maximum principle,
$a_{n-1}$ is non-constant, so in particular its gradient does not
vanish identically.  Consequently, the leading term decays like
$|y|^{-(n-1)}$ in the $y$–variable, which prevents the Riesz transform
from being $L^p$–bounded for $p\ge n$.

Although the expansion \eqref{asy1} has only been verified in the setting of $\mathbb{R}^n \# \mathbb{R}^n$, it is natural to expect an analogous statement on a broader class of manifolds. In fact, on a compactification $\overline{M}$ of the metric
cone $M=(0,\infty)_r \times Y$, the Riesz potential associated with
$H=\Delta+V$ admits an expansion of the form (we write $z=(r,y)$ and $z'=(r',y')$)
\begin{equation}\label{asy2}
    H^{-1/2}(z,z') \sim \sum_{j=0}^\infty \sum_{n=0}^\infty \mathcal{A}_{n,j}(z,y')\, \rho(z')^{\phi(j,n)},
    \qquad \mathcal{A}_{n,j}\in C^\infty(\overline{M} \times \partial \overline{M}), \quad z\in \overline{M},\quad z'\to \partial \overline{M},
\end{equation}
for some non-decreasing function $\phi$ (in both $j$ and $n$), where $\rho$ is a boundary defining function near this regime. In particular, for each $j\ge 0$, coefficient $\mathcal{A}_{0,j}$ is $H$–harmonic, i.e.\ $H\mathcal{A}_{0,j}=0$.

Next, for $f,g\in C_c^\infty(\overline{M})$, consider the bilinear form
\begin{equation*}
    \left\langle H^{1/2}f, g \right\rangle
    = \left\langle Hf, H^{-1/2}g \right\rangle
    = \left\langle \nabla f, \nabla H^{-1/2}g \right\rangle
      + \left\langle r^{-1} V_0 f,\, r^{-1} H^{-1/2}g \right\rangle.
\end{equation*}
Assume that $g$ is supported near the boundary $\partial\overline{M}$.
By \eqref{asy2} and integration by parts, the above bilinear form
admits an asymptotic expansion 
\begin{align}\label{asy3}
    \Big\langle f,\,
       \int \sum_{j,n=0}^\infty 
       H \mathcal{A}_j(z,y')\, \rho(z')^{\phi(j,n)} g(z')\, dz' \Big\rangle
    &= \Big\langle \frac{f}{r},\,
       r \int \sum_{j=0}^\infty \sum_{n=1}^\infty
       H \mathcal{A}_j(z,y')\, \rho(z')^{\phi(j,n)} g(z')\, dz' \Big\rangle,
\end{align}
since $\mathcal{A}_{0,j}$ is $H$–harmonic. The new leading term has better decay in
the $z'$–variable (since $\phi$ is non-decreasing and $\rho(z')$ can be taken as $1/r'$ near this boundary), and one may therefore expect that
\eqref{asy3} is $O(\|\nabla_H f\|_p \|g\|_{p'})$ for some range of $p$
strictly larger than the ``dual range'' $(p_1',p_0')$ associated with
the Riesz transform.

The above heuristic has already been mentioned and used in
\cite{He2,HeQD}. Motivated by the appearance of a lower threshold in
the range of $p$ for which the Riesz transform is bounded, we now look
at the reverse Riesz inequality from a different point of view. By
symmetry, \eqref{asy2} also suggests that 
\begin{equation*}
    H^{-1/2}(z,z') \sim \sum_{j=0}^\infty \sum_{n=0}^\infty \rho(z)^{\phi(j,n)} \mathcal{A}_{n,j}(z',y),
    \qquad z\to \partial \overline{M},\quad z'\in \overline{M},
\end{equation*}
where near this regime we may define $\rho(z)=1/r$. Proceeding with $z\to \partial\overline{M}$, we deduce (using the locality of $H$) that
\begin{align}\label{asy4}
    H^{1/2}f(z)
    = H^{-1/2}Hf(z)
    &\sim \int_{\overline{M}} \sum_{j,n=0}^\infty
         \rho(z)^{\phi(j,n)} \mathcal{A}_{n,j}(z',y)\, Hf(z')\, dz'\\ \nonumber
    &\sim \int_{\overline{M}} \sum_{j=0}^\infty \sum_{n=1}^\infty
         \rho(z)^{\phi(j,n)} r' H \mathcal{A}_{n,j}(z',y)\, \frac{f(z')}{r'}\, dz',
\end{align}
and here again a \emph{harmonic annihilation} phenomenon should occur: the
leading problematic term disappears after integration by parts.

\subsection{Outline}

The outline of this article is as follows.  In Section~\ref{sec2}, we
briefly recall the estimate for the resolvent $(H+1)^{-1}$ built in
\cite{HL,GH1} and list several critical formulas.
Moreover, we verify the duality property on metric cones with
inverse-square potentials, namely that \eqref{RVp} implies
$(\textrm{RRV}_{p'})$; see Subsection~\ref{sec2.3}.

In Section~\ref{sec3}, we give a direct proof of
Theorem~\ref{thm1}, namely the Lorentz-type endpoint estimate for the
Riesz transform.  In Section~\ref{sec4}, we prove the reverse
inequality, i.e.\ Theorem~\ref{thm2}, by two different methods.  The
approach inspired by the observation \eqref{asy3} (the
\emph{exterior harmonic annihilation}) is presented in
Subsection~\ref{sec4.1}, while the \emph{interior harmonic
annihilation} method (inspired by \eqref{asy4}) and the endpoint estimates are developed in
Subsection~\ref{sec4.2}.  Finally, we analyze the formula obtained in
Subsection~\ref{sec4.2} and use it to prove the unboundedness of the
reverse inequality in Subsection~\ref{sec4.3}. Finally, in Appendix~\ref{sec5} we carry out the calculation of the asymptotic expansion mentioned in Subsection~\ref{sec1.3}.

\section{Preliminaries}\label{sec2}

\subsection{Resolvent on metric cone}\label{sec2.1}

Recall that on the metric cone $M = (0, \infty)_r \times Y$, the gradient operator can be written in the form
\begin{equation*}
    \nabla = \bigl( \partial_r,\, r^{-1} \nabla_Y \bigr).
\end{equation*}
Consider the quadratic form 
\begin{equation*}
    \mathcal{Q}f = \int_M \nabla f \cdot \nabla f \, d\mu,
\end{equation*}
where $\mu$ is the metric-induced measure $d\mu = r^{d-1} \, dr \, dh(y)$.

By the Friedrichs extension, there exists a unique non-negative self-adjoint operator $\Delta$ such that
\begin{equation*}
    \mathcal{Q}f = \int_M \Delta f \cdot f \, d\mu.
\end{equation*}
In particular, for $f\in C_c^\infty(M)$,
\begin{equation*}
    \Delta f(z) = \Delta f(r,y) = -\partial_r^2 f - \frac{d-1}{r}\,\partial_r f + \frac{\Delta_Y}{r^2} f,
\end{equation*}
where $\Delta_Y$ is the Laplace--Beltrami operator on $Y$.

Let $V_0$ be a smooth function on $Y$ satisfying \eqref{positivity}, and define the Schrödinger operator $H=\Delta+\frac{V_0}{r^2}$. We analyze the Riesz transform via the formula (with the inessential constant $2/\pi$ omitted)
\begin{equation*}
    \nabla H^{-1/2} = \int_0^\infty \nabla (H+\lambda^2)^{-1} \, d\lambda.
\end{equation*}
The crux, therefore, is to understand the resolvent operator $(H+\lambda^2)^{-1}$. Note that $H$ is homogeneous of degree $-2$. It follows that
\begin{equation*}
    (H+\lambda^2)^{-1}(z,z') = (H+\lambda^2)^{-1}(r,y; r',y') 
    = \lambda^{d-2} (H+1)^{-1}(\lambda r, y; \lambda r', y').
\end{equation*}

Let $\mu_j^2$ be the eigenvalues of $\Delta_Y+V_0(y)+\left(\frac{d-2}{2}\right)^2$, and let $u_j$ be the corresponding $L^2$-normalized eigenfunctions. By \cite[Section~4.3]{HL}, for $(z,z')$ sufficiently far away from the diagonal, the kernel $(H+1)^{-1}(\lambda r,y; \lambda r',y')$ has the explicit form
\begin{align}\label{formula1}
    \lambda^{2-d} (rr')^{1-d/2} \sum_{j\ge 0} u_j(y) \overline{u_j(y')} I_{\mu_j}(\lambda r) K_{\mu_j}(\lambda r'), \quad & 4r < r',\\ \label{formula2}
    \lambda^{2-d} (rr')^{1-d/2} \sum_{j\ge 0} u_j(y') \overline{u_j(y)} I_{\mu_j}(\lambda r') K_{\mu_j}(\lambda r), \quad & 4r' < r,
\end{align}
where $I,K$ are modified Bessel functions of the second type.

We emphasize that the original formula in \cite[Equation~4.11]{HL} is written in terms of half-densities, and it differs from the true resolvent kernel by the factor $(rr')^{1-d/2}$; see \cite[Equation~4.7, p.~495]{HL}. 

However, the above formulas do not converge near the diagonal. To remedy this, Hassell and Lin glue the formulas \eqref{formula1} and \eqref{formula2} together with a parametrix, to determine the behaviour near the diagonal. For details of the parametrix construction, we refer the reader to \cite[Sections~4.5--4.7]{HL}; see also \cite{GH1}.

Throughout the article, we fix a non-negative smooth function $\mathcal{X}:[0,\infty) \to [0,1]$ supported in $[0,8/9]$ such that $\mathcal{X}(t)=1$ for all $0\le t\le 1/2$. We define
\begin{align*}
    G_1(z,z') &= (H+1)^{-1}(z,z') \left( 1 - \mathcal{X}\left( \frac{4r}{r'} \right) - \mathcal{X}\left( \frac{4r'}{r} \right) \right),\\
    G_2(z,z') &= (H+1)^{-1}(z,z') \,\mathcal{X}\left( \frac{4r}{r'} \right),\\
    G_3(z,z') &= (H+1)^{-1}(z,z') \,\mathcal{X}\left( \frac{4r'}{r} \right).
\end{align*}
According to this decomposition, we split the Riesz transform $R$ into three parts, i.e.\ $R = R_1+R_2+R_3$, where 
\begin{equation}\label{Ri}
    R_i f(z) = \int_0^\infty \int_M \nabla_z G_i^\lambda(z,z') f(z') \,\lambda^{d-2} \, d\mu(z') \, d\lambda,
\end{equation}
and
\begin{equation*}
    G_i^\lambda(z,z') = G_i(\lambda z, \lambda z') = G_i(\lambda r, y; \lambda r', y').
\end{equation*}

We summarize the estimates from \cite{HL} in the following lemmata, which we will use throughout the paper.

\begin{lemma}\cite[Lemma~5.4, Prop~5.10 and~5.12]{HL}\label{leHL1}
We have the following kernel estimates:
\begin{enumerate}[label=\arabic*)]
    \item For $G_1$, 
    \begin{align*}
    \bigl|\nabla_{z,z'}^j G_1(\lambda r,y;\lambda r',y')\bigr| \lesssim 
    \begin{cases}
        \lambda^{2-d} \, d(z,z')^{2-d-j}, & d(z,z')\le \lambda^{-1},\\
        \lambda^{-N+j} \, d(z,z')^{-N}, & d(z,z')\ge \lambda^{-1},
    \end{cases}
    \end{align*}
    for all $N>0$ and $j\ge 0$.

    In addition, $R_1$ is of weak type $(1,1)$ and bounded on $L^p$ for all $1<p<\infty$.
    
    \item For $G_2$,
    \begin{align*}
        \bigl|G_2(\lambda r, y; \lambda r',y')\bigr| &\lesssim 
        \begin{cases}
            \lambda^{2-d} \, r^{1-d/2+\mu_0} r'^{1-d/2-\mu_0}, & \lambda \le r'^{-1},\\
            \lambda^{1-d/2+\mu_0-N} \, r^{1-d/2+\mu_0} r'^{-N}, & \lambda \ge r'^{-1},
        \end{cases} \\
        \bigl|\nabla_z G_2(\lambda r, y; \lambda r', y')\bigr| &\lesssim 
        \begin{cases}
            \lambda^{2-d} \, r^{-d/2+\mu_0} r'^{1-d/2-\mu_0}, & \lambda \le r'^{-1},\\
            \lambda^{1-d/2+\mu_0-N} \, r^{-d/2+\mu_0} r'^{-N}, & \lambda \ge r'^{-1},
        \end{cases}
    \end{align*}
    for all $N>0$.

    In addition, $R_2$ is $L^p$-bounded for all $p<p_1$.
    
    \item For $G_3$,
    \begin{align*}
        \bigl|G_3(\lambda r, y; \lambda r',y')\bigr| &\lesssim 
        \begin{cases}
            \lambda^{2-d} \, r^{1-d/2-\mu_0} r'^{1-d/2+\mu_0}, & \lambda \le r^{-1},\\
            \lambda^{1-d/2+\mu_0-N} \, r^{-N} r'^{1-d/2+\mu_0}, & \lambda \ge r^{-1},
        \end{cases} \\
        \bigl|\nabla_z G_3(\lambda r, y; \lambda r', y')\bigr| &\lesssim 
        \begin{cases}
            \lambda^{2-d} \, r^{-d/2-\mu_0} r'^{1-d/2+\mu_0}, & \lambda \le r^{-1},\\
            \lambda^{1-d/2+\mu_0-N} \, r^{-N-1} r'^{1-d/2+\mu_0}, & \lambda \ge r^{-1},
        \end{cases}
    \end{align*}
    for all $N>0$.

    In addition, $R_3$ is $L^p$-bounded for all $p>p_0$.
\end{enumerate}

\end{lemma}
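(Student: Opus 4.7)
Since this lemma quotes results from \cite{HL,GH1}, my proposal is to reconstruct the three kernel estimates through the resolvent calculus on conic manifolds, and then derive the $L^p$ mapping properties from them. For $G_1$, whose support lies where $r$ and $r'$ are comparable, the analysis is essentially an interior parametrix problem. The plan is to build a parametrix for $(H+\lambda^2)^{-1}$ by gluing a Hadamard-type fundamental solution of $\Delta_g$ near the diagonal, treating the inverse-square potential $V_0/r^2$ as a lower-order perturbation at the relevant scale $d(z,z')\sim\lambda^{-1}$. After the scaling $z \mapsto \lambda z$, this yields the near-diagonal bound $\lambda^{2-d} d(z,z')^{2-d}$ when $d(z,z') \lesssim \lambda^{-1}$; the rapid decay $\lambda^{-N} d(z,z')^{-N}$ beyond this scale follows either from the parametrix's error term together with elliptic regularity, or from finite-propagation-speed arguments combined with the spectral representation of $(H+\lambda^2)^{-1}$.

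For the off-diagonal parts $G_2$ and $G_3$, I would exploit the explicit Bessel expansions \eqref{formula1}--\eqref{formula2}. Using the standard asymptotics $I_\mu(x) \sim c_\mu x^\mu$ and $K_\mu(x) \sim c_\mu' x^{-\mu}$ as $x \to 0^+$, and $I_\mu(x), K_\mu(x) \sim x^{-1/2} e^{\pm x}$ as $x \to \infty$, each sum should be split according to whether $\lambda r'$ (for $G_2$) is smaller or larger than $1$. The leading behavior in $r$ is dictated by the smallest index $\mu_0$, producing factors $r^{1-d/2+\mu_0}$ and (after differentiating in $z$) $r^{-d/2+\mu_0}$, while the tail in $j$ is controlled by Weyl-type asymptotics for $\mu_j$ on $(Y,h)$ together with uniform bounds on the eigenfunctions $u_j$.

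Turning to the mapping properties, $R_1$ should be handled by the general theory of singular integrals on spaces of homogeneous type: the pointwise estimates together with their gradient analogues give, after integration in $\lambda$, a Calderón--Zygmund kernel of order $d$ satisfying the Hörmander condition, so the Coulhon--Duong framework \cite{CoDu} immediately delivers the weak type $(1,1)$ and $L^p$-boundedness for $1<p<\infty$. For $R_2$ and $R_3$, my plan is to first integrate the pointwise bounds over the spectral parameter $\lambda$, collapsing them into an effective radial kernel of the form $r^{a}(r')^{b}$ with a cutoff enforcing $r \ll r'$ or $r' \ll r$, and then to apply a weighted Schur test in the radial variable against the cone measure $r^{d-1}dr$. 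The sharp thresholds $p_0$ and $p_1$ should emerge as precisely the exponents at which the Schur weights just fail to integrate in the relevant radial direction.

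The main technical obstacle I anticipate is the parametrix construction for $G_1$ with estimates that are genuinely uniform in the spectral parameter $\lambda$, since one has to simultaneously track the semiclassical scale $\lambda^{-1}$ and the conic geometry near the tip $r=0$; this is precisely what \cite{HL,GH1} achieve through their $b$-calculus / scattering-calculus framework. The Bessel analysis for $G_2$ and $G_3$ is conceptually more explicit but requires careful bookkeeping when summing over $j$, so that the tails in the eigenvalue index do not destroy the sharp leading-order decay dictated by $\mu_0$.
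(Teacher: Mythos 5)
This lemma is not proved in the paper at all; it is quoted verbatim from Hassell--Lin, with the paper merely listing the relevant references ([HL, Lemma 5.4, Prop.~5.10, 5.12] and [GH1]) and pointing the reader to [HL, Sections 4.5--4.7] for the parametrix construction. So your task was effectively to reconstruct a proof that lives in a different paper. Judged on those terms, your sketch captures the architecture of the original argument quite faithfully: the separation-of-variables Bessel formulas \eqref{formula1}--\eqref{formula2} for the off-diagonal pieces $G_2, G_3$; control of the eigenfunction sum over $j$ by Weyl-type growth of $\mu_j$ together with the exponential gain from $(r/r')^{\mu_j}$, which is exactly the content of Lemma~\ref{leHL2}; a parametrix near the diagonal for $G_1$ glued to the Bessel expansions; and for the mapping properties, Calder\'on--Zygmund theory for $R_1$ and a radial weighted mapping criterion for $R_2, R_3$. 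Your ``weighted Schur test against $r^{d-1}dr$'' is in substance the criterion recorded as Lemma~\ref{leHL3} (Hassell--Lin's Corollary 5.9), and the thresholds $p_0, p_1$ do indeed arise from the radial integrability breakdown exactly as you describe.

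Two points where your sketch departs from what the cited works actually do. First, you propose to treat $V_0/r^2$ as a ``lower-order perturbation at scale $d(z,z')\sim\lambda^{-1}$''; that perturbative picture is misleading, because the inverse-square potential is scale-critical, of the same homogeneity as $\Delta$ on the cone, and cannot be absorbed into an error term near $r=0$. What [HL] and [GH1] do instead is build $V_0$ directly into the cross-section operator $\Delta_Y + V_0 + \bigl(\tfrac{d-2}{2}\bigr)^2$, whose eigenvalues $\mu_j^2$ then set the Bessel indices in \eqref{formula1}--\eqref{formula2}; the diagonal parametrix in the $b$/scattering calculus is constructed for the full operator $H$ from the start, and the positivity condition \eqref{positivity} is what makes this possible. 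Second, the rapid decay $\lambda^{-N}d(z,z')^{-N}$ for $d(z,z')\gtrsim\lambda^{-1}$ is not obtained via finite propagation speed in [HL]; it is read off from the conormal order of the resolvent kernel at the appropriate boundary face of the blown-up double space $M^2_{b,sc}$. Your alternative is not wrong in principle, but it is not the route taken, and it would require additional work to transfer wave-equation bounds to the resolvent uniformly in $\lambda$. With those caveats, your reconstruction is a sound high-level account of the cited proof, eliding (as it must) the substantial microlocal bookkeeping on the blown-up space.
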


\begin{remark}\label{remark1}
For later use, we give a similar estimate for $\nabla_{z'}G_2^\lambda$. In fact, by \cite[Prop~4.6, Theorem~4.11]{HL}, we know that $G_2$ is conormal at all boundary hypersurfaces. Hence, by conormality,
\begin{align}\label{dG2}
\bigl|\nabla_{z'} G_2(\lambda r, y; \lambda r', y')\bigr| &\lesssim 
        \begin{cases}
            \lambda^{2-d} \, r^{1-d/2+\mu_0} r'^{-d/2-\mu_0}, & \lambda \le r'^{-1},\\
            \lambda^{1-d/2+\mu_0-N} \, r^{1-d/2+\mu_0} r'^{-N-1}, & \lambda \ge r'^{-1},
        \end{cases}
\end{align}
for all $N>0$.
\end{remark}

\begin{lemma}\cite[Lemma~4.4]{HL}\label{leHL2}
For any $0<\beta<1$ and any $M,N>0$, the series
\begin{equation*}
    \sum_{\mu_j \ge M} \mu_j^N \alpha^{\mu_j-M}
\end{equation*}
converges for all $0<\alpha \le \beta$, and it is bounded uniformly in $\alpha$.
\end{lemma}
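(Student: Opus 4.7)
The plan is to reduce the statement to the polynomial eigenvalue counting law for the elliptic self-adjoint operator $P := \Delta_Y + V_0 + \bigl(\tfrac{d-2}{2}\bigr)^2$ on the closed $(d-1)$-dimensional Riemannian manifold $(Y,h)$. First I would invoke Weyl's law (applicable because $V_0$ is smooth on the compact $Y$, hence a bounded perturbation of $\Delta_Y$, and \eqref{positivity} makes $P$ strictly positive) to obtain
\begin{equation*}
    N(\lambda) := \#\bigl\{ j : \mu_j \le \lambda \bigr\} \le C(1+\lambda)^{d-1}, \qquad \lambda \ge 0.
\end{equation*}
In particular, the number of indices $j$ whose $\mu_j$ falls into a unit window $[k,k+1)$ is at most $C(1+k)^{d-1}$.

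Next I would use the monotonicity $\alpha^{\mu_j - M} \le \beta^{\mu_j - M}$, valid because $0<\alpha\le\beta<1$ and $\mu_j \ge M$, to replace $\alpha$ by $\beta$ uniformly in the summation. Splitting the series along integer windows $k \le \mu_j < k+1$ with $k \ge \lfloor M \rfloor$ and combining with the counting bound gives
\begin{equation*}
    \sum_{\mu_j \ge M} \mu_j^N \alpha^{\mu_j - M} \;\le\; C \sum_{k \ge \lfloor M \rfloor} (1+k)^{d-1+N}\,\beta^{\,k-1-M}.
\end{equation*}
The right-hand side is independent of $\alpha$ and converges absolutely because $\beta<1$ makes the geometric factor swallow the polynomial $(1+k)^{d-1+N}$. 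This yields both convergence and the desired uniform bound in $\alpha$.

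There is no serious obstacle: the only item one must check carefully is that Weyl's polynomial counting estimate is indeed available for the \emph{perturbed} operator $P$ and not only for $\Delta_Y$. This follows immediately from the min-max principle (since $V_0 \in L^\infty(Y)$ merely shifts each eigenvalue by at most $\|V_0\|_\infty$), or alternatively from the standard Weyl law for elliptic self-adjoint operators on closed Riemannian manifolds. All remaining steps are elementary summation.
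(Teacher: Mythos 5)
Your argument is correct. The paper quotes this lemma from \cite[Lemma~4.4]{HL} without reproving it, and your route via Weyl's law is the natural and, as far as I know, the same argument used there: the eigenvalue counting function for the elliptic self-adjoint operator $\Delta_Y+V_0+\bigl(\tfrac{d-2}{2}\bigr)^2$ on the closed $(d-1)$-manifold $Y$ grows polynomially, the monotonicity $\alpha^{\mu_j-M}\le\beta^{\mu_j-M}$ (valid since $\mu_j\ge M$) removes the $\alpha$-dependence, and the resulting series $\sum_k (1+k)^{d-1+N}\beta^{k-M}$ is absolutely convergent. The only point worth checking, which you correctly flag and dispose of by min-max or by citing Weyl's law directly for elliptic self-adjoint operators, is that the polynomial counting estimate survives the zeroth-order perturbation $V_0+\bigl(\tfrac{d-2}{2}\bigr)^2$.
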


The following lemma was also studied in \cite{GH1}.

\begin{lemma}\cite[Corollary~5.9]{HL}\label{leHL3}
Let $K$ be a kernel defined on $M=(0,\infty)_r \times Y$. Suppose 
\begin{equation*}
    |K(r,y;r',y')| \lesssim 
    \begin{cases}
        r^{-\alpha} r'^{-\beta}, & r\le r',\\
        0, & r>r',
    \end{cases}
\end{equation*}
and assume that $\alpha+\beta=d$, $\beta>0$, and $p<\dfrac{d}{\max(\alpha,0)}$. Then $K$ acts as a bounded operator on $L^p(M, r^{d-1}dr\,dh)$.

Similarly, if 
\begin{equation*}
    |K(r,y;r',y')| \lesssim 
    \begin{cases}
        0, & r\le r',\\
        r^{-\gamma} r'^{-\delta}, & r>r',
    \end{cases}
\end{equation*}
and $\gamma+\delta=d$, $\gamma>0$ and $p>\dfrac{d}{\min(\gamma,d)}$, then $K$ acts as a bounded operator on $L^p(M, r^{d-1}dr\,dh)$.
\end{lemma}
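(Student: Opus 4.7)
The plan is a Schur-type reduction to a one-dimensional weighted Hardy inequality. The key point is that the hypothesized pointwise bound on $|K(r,y;r',y')|$ is uniform in the cross-section variables, which lets us integrate out the angular factors and reduce matters to a purely radial operator on $(0,\infty)$.

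For the first claim, set $\tilde K(r,r')=r^{-\alpha}r'^{-\beta}\mathbf{1}_{r\le r'}$ and let $F(r'):=\int_Y|f(r',y')|\,dh(y')$, which by H\"older's inequality on the compact cross-section satisfies $F(r')\le C\,\|f(r',\cdot)\|_{L^p(Y)}$. Since $\tilde K$ is independent of $(y,y')$, we obtain the pointwise domination
\begin{equation*}
    |Kf(r,y)|\le \int_0^\infty \tilde K(r,r')\,F(r')\,r'^{d-1}\,dr',
\end{equation*}
whose right-hand side does not depend on $y$, so integration in $y$ contributes only a bounded constant. The claim therefore reduces to showing that
\begin{equation*}
    T_0g(r):=r^{-\alpha}\int_r^\infty r'^{\alpha-1}\,g(r')\,dr'
\end{equation*}
(using $\beta=d-\alpha$) is bounded on $L^p((0,\infty),\,r^{d-1}dr)$.

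I would next straighten the measure via the isometry $r=e^u$, $h(u):=g(e^u)e^{du/p}$, which identifies $L^p(r^{d-1}dr)$ with $L^p(\mathbb{R},du)$. A brief computation shows that under this change of variables $T_0$ transforms into convolution on $\mathbb{R}$ with the kernel $\psi(w)=e^{(d/p-\alpha)w}\mathbf{1}_{w<0}$. Young's convolution inequality then delivers boundedness on $L^p$ precisely when $\psi\in L^1(\mathbb{R})$, that is, when $d/p-\alpha>0$, recovering the stated range $p<d/\max(\alpha,0)$; the case $\alpha\le 0$ places no further constraint, since $\psi$ is automatically integrable on the negative half-line.

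The second assertion can be handled by the same argument with the roles of the two half-lines swapped---the transformed operator is convolution with $e^{-(d/p-\gamma)w}\mathbf{1}_{w>0}$, an $L^1$ kernel iff $p>d/\gamma$---or more cheaply by taking the formal adjoint of the first assertion and rewriting the resulting threshold in terms of $p'$. The $\min(\gamma,d)$ in the lemma simply absorbs the degenerate case $\gamma\ge d$, where $\delta\le 0$ and the $r'^{-\delta}$ factor becomes a non-negative power so that the operator is trivially bounded for all $p>1$. The only technical points worth care are the bookkeeping of signs in the logarithmic substitution and the correct treatment of the two degenerate corners ($\alpha\le 0$ in the first part, $\gamma\ge d$ in the second); beyond that, no oscillation is lost, and sharpness of the thresholds is forced by the non-integrability of $\psi$ at the forbidden endpoints.
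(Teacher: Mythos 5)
Your proof is correct, and the overall strategy is sound. The paper itself does not prove this lemma but cites it directly from Hassell--Lin \cite[Corollary~5.9]{HL}, so there is no in-house argument to compare against. Your reduction---integrate out the compact cross-section $Y$ by H\"older to pass to a purely radial operator, then use the logarithmic substitution $r=e^u$ to convert the weighted Hardy-type operator on $L^p(r^{d-1}dr)$ into a convolution on $L^p(\mathbb{R},du)$, and finish with Young's inequality---is a standard and transparent way to obtain exactly this statement; it is morally equivalent to the Schur-test-with-power-weight argument that produces the same thresholds.

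One small slip to fix: in the second assertion, redoing your own substitution gives the kernel $\tilde\psi(w)=e^{(d/p-\gamma)w}\mathbf 1_{w>0}$, not $e^{-(d/p-\gamma)w}\mathbf 1_{w>0}$ as written. With the correct sign, $\tilde\psi\in L^1$ precisely when $d/p-\gamma<0$, i.e.\ $p>d/\gamma$, which is the condition you state; with the sign as written you would get the opposite inequality, so the displayed kernel and the stated $L^1$ threshold are inconsistent even though your final conclusion is right. Also, a small imprecision: the case $\gamma\ge d$ is not ``trivially bounded'' merely because $r'^{-\delta}$ becomes a nonnegative power; the point is exactly that $d/p-\gamma\le d/p-d<0$ for all $p>1$ so the $L^1$ condition holds automatically, which your convolution picture already shows---you don't need a separate triviality claim. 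The rest of the bookkeeping (the reduction $\|F\|_{L^p(r^{d-1}dr)}\lesssim\|f\|_{L^p(M)}$ using compactness of $Y$, and the duality route to the second half) is fine.
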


\subsection{Hardy’s inequality}

\begin{lemma}\label{Hardy}
On $M$, the following Hardy inequality holds:
\begin{align*}
    \int_{M} \left| \frac{f(r,y)}{r} \right|^p \, d\mu 
    \le C \int_M |\nabla f(r,y)|^p \, d\mu,\quad \forall f\in C_c^\infty(M),
\end{align*}
for all $p\in (1,\infty)\setminus \{d\}$. 
\end{lemma}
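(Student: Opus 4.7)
The plan is to reduce the cone Hardy inequality to the classical one-dimensional weighted Hardy inequality by exploiting the product structure of the metric. Since the cone gradient in the orthonormal frame for $g = dr^2 + r^2 h$ decomposes as $\nabla = (\partial_r, r^{-1}\nabla_Y)$, one has the pointwise bound $|\nabla f|^2 = |\partial_r f|^2 + r^{-2}|\nabla_Y f|^2 \ge |\partial_r f|^2$. Thus it will be enough to prove
\[
    \int_M \Bigl|\frac{f(r,y)}{r}\Bigr|^p d\mu \le C \int_M |\partial_r f(r,y)|^p \, d\mu.
\]

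Using $d\mu = r^{d-1}\,dr\,dh(y)$ and Fubini, the problem reduces to proving the one-dimensional weighted Hardy inequality
\[
    \int_0^\infty |F(r)|^p\, r^{d-1-p}\,dr \;\le\; \Bigl(\tfrac{p}{|d-p|}\Bigr)^p \int_0^\infty |F'(r)|^p\, r^{d-1}\,dr
\]
for each fixed $y \in Y$, applied to $F(r):= f(r,y) \in C_c^\infty((0,\infty))$, and then integrating in $y$ against $dh(y)$. This one-dimensional inequality is classical for $p \in (1,\infty)$ with $d \ne p$: one writes $r^{d-1-p} = (d-p)^{-1}\partial_r(r^{d-p})$, integrates by parts (boundary terms vanish since $F$ is compactly supported in $(0,\infty)$), and applies H\"older's inequality to the resulting cross term $\int_0^\infty |F|^{p-1}|F'|\, r^{d-p}\,dr$ to close the estimate.

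No serious obstacle is expected; the argument is a standard reduction. The only real subtlety is the exclusion $p = d$: at this critical exponent the constant $(p/|d-p|)^p$ blows up and the inequality genuinely fails, as witnessed by suitably truncated logarithmic profiles. Note that compact support of $f \in C_c^\infty(M)$ inside $(0,\infty)_r \times Y$ is precisely what makes the relevant boundary terms vanish for both regimes $p < d$ and $p > d$, so no extra decay condition on $f$ near $r = 0$ or $r = \infty$ is required.
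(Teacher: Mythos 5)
Your proposal is correct and follows exactly the paper's route: reduce to the radial direction via $|\partial_r f|\le|\nabla f|$ and Fubini, then invoke the classical one-dimensional weighted Hardy inequality $\int_0^\infty |F|^p r^{d-1-p}\,dr \le (p/|d-p|)^p\int_0^\infty |F'|^p r^{d-1}\,dr$ for $F\in C_c^\infty((0,\infty))$, $p\ne d$. The only difference is cosmetic — you spell out the integration-by-parts and H\"older argument that the paper merely cites from the literature.
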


\begin{proof}[Proof of Lemma~\ref{Hardy}]
Since $Y$ is compact, it suffices to prove that for each $y\in Y$,
\begin{align*}
    \int_0^\infty \left| \frac{f(r,y)}{r} \right|^p r^{d-1}\,dr 
    \lesssim \int_0^\infty |\partial_r f(r,y)|^p r^{d-1} \, dr,
\end{align*}
with an implicit constant independent of $y$. The one-dimensional Hardy's inequality with weight (see, for example, \cite{Grafakos1}) gives precisely this estimate, provided $p\neq d$.
\end{proof}

\begin{remark}
    The exclusion of the case $p=d$ is sharp in the weighted one-dimensional Hardy's inequality with weight: at $p=d$ the inequality fails in general, which explains why $p=d$ is omitted from Lemma~\ref{Hardy} and Corollary~\ref{cor1}.
\end{remark}

It follows that:

\begin{proof}[Proof of Corollary~\ref{cor1}]
The result follows directly by combining Theorem~\ref{thm2} with Lemma~\ref{Hardy}.
\end{proof}

\subsection{Duality property between Riesz and reverse Riesz inequalities}\label{sec2.3}

In this subsection, we identify the most natural conjectural range for \eqref{RRVp} to hold, namely the dual range associated with the $L^p$-boundedness of the Riesz transform. 

To proceed, we define
\begin{align*}
    T u := r^{-1} H^{-1/2}u 
    = r^{-1} \int_0^\infty (H+\lambda^2)^{-1} u \, d\lambda.
\end{align*}
Using the parametrix construction, we decompose $T$ into three pieces, $T = \sum_{j=1}^3 T_j$, where
\begin{align*}
    T_j u(z) 
    := r^{-1} \int_0^\infty \int_M \lambda^{d-2} G_j(\lambda z, \lambda z') \, u(z') \, d\mu(z') \, d\lambda,
\end{align*}
for $j=1,2,3$.

\begin{lemma}\label{T1}
$T_1$ is bounded on $L^p$ for all $1 < p < \infty$.
\end{lemma}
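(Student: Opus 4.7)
The plan is to bound $T_1$ via Schur's test applied to a pointwise kernel estimate, exploiting the fact that the cutoff built into $G_1$ forces $r\sim r'$ on the support of the kernel.

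First, I would extract a pointwise bound on the integral kernel
\[
    K_{T_1}(z,z') = r^{-1}\int_0^\infty \lambda^{d-2}\,G_1(\lambda z,\lambda z')\,d\lambda.
\]
Splitting the $\lambda$-integral at the scale $\lambda = d(z,z')^{-1}$ and applying Lemma~\ref{leHL1}(1) in each regime, the near-field piece contributes
\[
\int_0^{1/d(z,z')} \lambda^{d-2}\cdot \lambda^{2-d}\,d(z,z')^{2-d}\,d\lambda \sim d(z,z')^{1-d},
\]
and for any $N>d-1$ the far-field piece $\int_{1/d(z,z')}^\infty \lambda^{d-2-N}d(z,z')^{-N}\,d\lambda$ contributes $\sim d(z,z')^{1-d}$ as well. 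Combined with the support property $r/r'\in[1/8,8]$ inherited from the cutoff $\mathcal{X}$, this yields
\[
    |K_{T_1}(z,z')| \lesssim r^{-1}\,d(z,z')^{1-d}\,\mathbf{1}_{\{r/r'\in[1/8,8]\}}.
\]

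Second, I would apply Schur's test with constant weights. Fix $z=(r,y)$; the support condition forces $r'\sim r$, and in particular $d(z,z')\lesssim r$ on the support. Using the Ahlfors-type volume growth $V(z,s)\sim s^d$ of the cone for $s\lesssim r$, a standard layer-cake computation yields
\[
    \int_{r'/r\in[1/8,8]} d(z,z')^{1-d}\,d\mu(z') \lesssim r,
\]
so that $\int_M |K_{T_1}(z,z')|\,d\mu(z')\lesssim r^{-1}\cdot r = 1$ uniformly in $z$. Since $r\sim r'$ on the support, the factor $r^{-1}$ can be replaced by $r'^{-1}$ up to constants, and the symmetric computation gives $\int_M |K_{T_1}(z,z')|\,d\mu(z)\lesssim 1$ uniformly in $z'$. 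Schur's test then yields $\|T_1\|_{L^p\to L^p}\le C$ for every $1\le p\le\infty$, which in particular covers the claimed range $1<p<\infty$.

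The main technical step is the layer-cake estimate $\int d(z,z')^{1-d}\,d\mu(z')\lesssim r$ on the support of $G_1$; it reflects the fact that, on this support, the cone looks like a piece of $d$-dimensional Euclidean space on scales $\lesssim r$. The factor $r^{-1}$ in the kernel of $T_1$ is then precisely compensated by this $r$ from the volume growth, which is exactly what forces $T_1$ to behave like a bounded, dilation-invariant integral operator for every $1<p<\infty$.
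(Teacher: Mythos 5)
Your proof is correct and takes a genuinely different (though closely related) route from the paper's. Both arguments begin with the same reduction: splitting the $\lambda$-integral at scale $d(z,z')^{-1}$ and using the two-regime bounds on $G_1$ from Lemma~\ref{leHL1}(1) to get $|K_{T_1}(z,z')|\lesssim r^{-1} d(z,z')^{1-d}\,\mathbf{1}_{\{r/8\le r'\le 8r\}}$, and then noting that the support condition forces $d(z,z')\lesssim r$. Where you diverge is in the final step: the paper dominates $|T_1 u(z)|$ pointwise by a dyadic sum that it recognizes as $\lesssim \mathcal{M}u(z)$, and then invokes the Hardy--Littlewood maximal theorem, giving $L^p$-boundedness for $1<p\le\infty$ and weak type $(1,1)$. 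You instead run Schur's test with constant weights: the layer-cake estimate $\int_{B(z,10r)} d(z,z')^{1-d}\,d\mu(z')\lesssim r$ (using $V(z,s)\sim s^d$) gives $\sup_z\int|K_{T_1}(z,z')|\,d\mu(z')\lesssim 1$, and the symmetry $r\sim r'$ on the support gives the column bound $\sup_{z'}\int|K_{T_1}(z,z')|\,d\mu(z)\lesssim 1$. This is a slightly more elementary route (no maximal theorem needed) and actually yields a nominally stronger conclusion, namely strong $L^p$-boundedness for all $1\le p\le\infty$ including the endpoints $p=1$ and $p=\infty$; the paper's maximal-function route buys only $1<p\le\infty$ strongly plus weak $(1,1)$, which is all the paper needs. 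Both approaches ultimately rest on the same geometric fact, the Ahlfors $d$-regularity of the cone, so this is a difference of packaging rather than substance, but your version is cleaner for this particular operator.
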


\begin{proof}[Proof of Lemma~\ref{T1}]
By Lemma~\ref{leHL1}, the on-diagonal part $G_1$ has estimate
\begin{equation*}
    |G_1(\lambda z, \lambda z')|\lesssim \begin{cases}
        \lambda^{2-d} d(z,z')^{2-d}, & \lambda \le d(z,z')^{-1},\\
        \lambda^{-N} d(z,z')^{-N}, & \lambda \ge d(z,z')^{-1},
    \end{cases}
\end{equation*}
for all $N>0$.

Therefore, by choosing $N>d-1$, the kernel of $T_1$ can be estimated by
\begin{align*}
    \left|r^{-1} \int_0^\infty \lambda^{d-2} G_1(\lambda z, \lambda z') d\lambda \right| &\lesssim r^{-1} \int_0^{d(z,z')^{-1}} d(z,z')^{2-d} d\lambda\\
    &+ r^{-1} \int_{d(z,z')^{-1}}^\infty \lambda^{d-2-N} d(z,z')^{-N} d\lambda  \\
    &\lesssim r^{-1} d(z,z')^{1-d}.
\end{align*}
Combining this with the fact that $G_1$ is supported in the range $\{r/8 \le r' \le 8r\}$, we conclude 
\begin{equation*}
    |T_1u(z)|\lesssim r^{-1} \int_{\{r/8 \le r' \le 8r\}} \frac{|u(z')|}{d(z,z')^{d-1}} dz'.
\end{equation*}
Next, we replace the domain of integration by $B(z,10r)$. Indeed, for $z'=(r',y')$ such that $r/8 \le r' \le 8r$, we have
\begin{equation*}
    d(z,z')^2 = \begin{cases}
        |r-r'|^2 + 2rr' \left(1-\cos{d_Y(y,y')}\right), & d_Y(y,y')\le \pi,\\
        (r+r')^2, & d_Y(y,y')\ge \pi,
    \end{cases} < 100 r^2.
\end{equation*}
To this end, we proceed with the following standard argument.
\begin{align*}
    |T_1u(z)|&\lesssim r^{-1} \int_{B(z,10r)} \frac{|u(z')|}{d(z,z')^{d-1}} dz' \lesssim r^{-1} \sum_{i=0}^\infty \int_{10r/2^{i+1}\le d(z,z')<10r/2^i} \left(\frac{r}{2^i}\right)^{1-d} |u(z')| dz'\\
    &\lesssim r^{-d} \sum_{i\ge 0} 2^{i(d-1)} \int_{B(z,10r/2^i)}|u(z')| dz' \lesssim r^{-d} \sum_{i\ge 0} 2^{i(d-1)} \left(\frac{r}{2^i}\right)^d \mathcal{M}u(z)\lesssim \mathcal{M}u(z),
\end{align*}
where $\mathcal{M}u(z) = \sup_{B \ni z} \textrm{vol}(B)^{-1} \int_B |u|$ is the usual Hardy-Littlewood maximal operator and the second-to-last inequality is due to the fact that $\textrm{vol}(B(z,r)) \sim r^d$ for all $z\in M$ and $r>0$. The proof follows directly by the maximal theorem.

\end{proof}

\begin{lemma}\label{T2T3}
$T_2$ is bounded on $L^p$ for $1<p<p_1$ and $T_3$ is bounded on $L^p$ for $p_0<p<\infty$, where $p_0,p_1$ are defined as in \eqref{p0p1}.
\end{lemma}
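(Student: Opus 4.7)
The plan is to integrate the resolvent kernel estimates of Lemma~\ref{leHL1} in the variable $\lambda$ to produce an explicit pointwise bound for the off-diagonal pieces of the Riesz potential $H^{-1/2}$, multiply by $r^{-1}$, and then apply the Schur-type criterion of Lemma~\ref{leHL3}. The key observation is that the Bessel-type estimates for $G_2$ and $G_3$ are scale-adapted to the variable $r'$ (resp.\ $r$), so splitting the $\lambda$-integral at $\lambda = r'^{-1}$ (resp.\ $\lambda = r^{-1}$) yields comparable contributions that sum to a clean homogeneous bound.

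For $T_2$, which is supported in $\{4r < r'\}$, split
\[
    \int_0^\infty \lambda^{d-2}\, G_2(\lambda z, \lambda z')\, d\lambda
    = \int_0^{r'^{-1}} + \int_{r'^{-1}}^\infty.
\]
The first regime, using $|G_2| \lesssim \lambda^{2-d}\, r^{1-d/2+\mu_0}\, r'^{1-d/2-\mu_0}$, produces $r^{1-d/2+\mu_0}\, r'^{-d/2-\mu_0}$. The second regime, using $|G_2| \lesssim \lambda^{1-d/2+\mu_0-N}\, r^{1-d/2+\mu_0}\, r'^{-N}$ for any $N > d/2+\mu_0$, gives exactly the same order. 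Multiplying by the outer factor $r^{-1}$, the kernel of $T_2$ satisfies
\[
    |K_{T_2}(z,z')| \lesssim r^{-(d/2-\mu_0)}\, r'^{-(d/2+\mu_0)}, \qquad 4r<r'.
\]
With $\alpha = d/2 - \mu_0$ and $\beta = d/2 + \mu_0$, one has $\alpha+\beta = d$ and $\beta > 0$, so the first part of Lemma~\ref{leHL3} gives boundedness on $L^p(M, r^{d-1}dr\,dh)$ whenever $p < d/\max(\alpha,0) = p_1$.

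For $T_3$, supported in $\{4r' < r\}$, we run the analogous argument, splitting at $\lambda = r^{-1}$ and using the pair of bounds for $G_3$ from Lemma~\ref{leHL1}. Both regimes contribute $r^{-d/2-\mu_0}\, r'^{1-d/2+\mu_0}$ to $\int_0^\infty \lambda^{d-2} G_3\, d\lambda$, and multiplying by $r^{-1}$ yields
\[
    |K_{T_3}(z,z')| \lesssim r^{-(1 + d/2 + \mu_0)}\, r'^{-(d/2 - \mu_0 - 1)}, \qquad 4r' < r.
\]
Now $\gamma := 1 + d/2 + \mu_0$ and $\delta := d/2 - \mu_0 - 1$ satisfy $\gamma + \delta = d$ and $\gamma > 0$ (since $\mu_0 > 0$ and $d \ge 3$), so the second part of Lemma~\ref{leHL3} yields $L^p$-boundedness for
\[
    p > \frac{d}{\min(\gamma, d)} = \frac{d}{\min\bigl(\tfrac{d+2}{2}+\mu_0,\, d\bigr)} = p_0.
\]

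No single step here is delicate, but the main bookkeeping point is ensuring that the two regimes of each $\lambda$-integral produce matching powers of $r$ and $r'$. This is automatic once $N$ is chosen strictly larger than $d/2 + \mu_0$, so that the large-$\lambda$ tail converges and reproduces the same homogeneous exponents as the small-$\lambda$ piece. Note also that $\delta$ in the $T_3$ estimate is allowed to be negative (e.g.\ when $\mu_0$ is large), which is consistent with the hypotheses of Lemma~\ref{leHL3}, as only $\gamma > 0$ is required.
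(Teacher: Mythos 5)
Your proof is correct and follows essentially the same route as the paper's: integrate the $G_2$, $G_3$ kernel bounds from Lemma~\ref{leHL1} in $\lambda$ (splitting at $r'^{-1}$ resp.\ $r^{-1}$ and taking $N>d/2+\mu_0$), tack on the $r^{-1}$ factor, and feed the resulting homogeneous bounds $r^{-d/2+\mu_0}r'^{-d/2-\mu_0}$ and $r^{-1-d/2-\mu_0}r'^{1-d/2+\mu_0}$ into Lemma~\ref{leHL3}. The exponent bookkeeping and the identification of the resulting ranges with $p_1$ and $p_0$ all check out, and your remark about $\delta$ being permitted to be negative is a correct reading of Lemma~\ref{leHL3}.
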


\begin{proof}[Proof of Lemma~\ref{T2T3}]
By Lemma~\ref{leHL1}, $G_2$ has upper bound:
\begin{equation*}
|G_2(\lambda r, \lambda r',y,y')|  \lesssim \begin{cases}
    \lambda^{2-d} r^{1-d/2+\mu_0} r'^{1-d/2-\mu_0}, & \lambda \le r'^{-1},\\
    \lambda^{1-d/2+\mu_0-N} r^{1-d/2+\mu_0} r'^{-N}, & \lambda \ge r'^{-1},
\end{cases}
\end{equation*}
for all $N>0$. It follows that
\begin{align*}
    |T_2(z,z')| \lesssim &r^{-1} \int_0^{r'^{-1}} r^{1-d/2+\mu_0} r'^{1-d/2-\mu_0} d\lambda\\
    &+ r^{-1} \int_{r'^{-1}}^\infty  \lambda^{d/2-1+\mu_0-N} r^{1-d/2+\mu_0} r'^{-N} d\lambda\\
    &\lesssim r^{-d/2+\mu_0} r'^{-d/2-\mu_0},
\end{align*}
where we choose $N>d/2+\mu_0$.

Similarly, a parallel argument yields 
\begin{align*}
    |T_3(z,z')|\lesssim &r^{-1} \int_0^{r^{-1}} r^{1-d/2-\mu_0} r'^{1-d/2+\mu_0} d\lambda \\
    &+ r^{-1} \int_{r^{-1}}^\infty \lambda^{d/2-1+\mu_0-N} r^{-N} r'^{1-d/2+\mu_0} d\lambda\\
    &\lesssim r^{-1-d/2-\mu_0} r'^{1-d/2+\mu_0}.
\end{align*}
Recall that $G_2$ is supported in the range where $r\le r'$ and $G_3$ is supported in $\{r\ge r'\}$. $T_2u$ and $T_3u$ are therefore bounded by
\begin{align*}
    r^{-d/2+\mu_0} \int_{\{r\le r'\}} r'^{-d/2-\mu_0} |u(z')| dz' \quad \textrm{and} \quad r^{-1-d/2-\mu_0} \int_{\{r\ge r'\}} r'^{1-d/2+\mu_0} |u(z')| dz',
\end{align*}
respectively. The proof follows from Lemma~\ref{leHL3} immediately.

\end{proof}

\begin{remark}
It is not surprising that $T_2$ and $T_3$ have the same range of $L^p$-boundedness as $R_2$ and $R_3$ in the Riesz transform. Indeed, since $G_2$ and $G_3$ are polyhomogeneous conormal at all boundary hypersurfaces, there is essentially no difference, at the level of $L^p$-mapping properties, between applying $\nabla_z$ to the kernel and multiplying it by $r^{-1}$.
\end{remark}

Next, we verify the duality property between \eqref{RVp} and $(\textrm{RRV}_{p'})$.

\begin{proposition}\label{lemma2}
Under the assumptions of Theorem~\ref{thm2}, the reverse inequality holds for $p\in (p_1', p_0')$. That is 
\begin{align*}
    \|H^{1/2}f\|_p \le C \|\nabla_H f\|_p, \quad p_1'<p<p_0',
\end{align*}
for all $f\in C_c^\infty(M)$.
\end{proposition}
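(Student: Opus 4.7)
The plan is a standard $L^p$--$L^{p'}$ duality argument, combined with the $L^{p'}$-boundedness of the two auxiliary operators $R = \nabla H^{-1/2}$ (Theorem~\ref{thmHL}) and $T = r^{-1} H^{-1/2}$ (Lemmas~\ref{T1} and~\ref{T2T3}). Fix $p \in (p_1', p_0')$, so that $p' \in (p_0, p_1)$. It suffices to prove
\[
    |\langle H^{1/2}f, g\rangle| \lesssim \|\nabla_H f\|_p \, \|g\|_{p'}
\]
for all $f, g \in C_c^\infty(M)$, since then the conclusion follows by density and duality $\|H^{1/2}f\|_p = \sup\{|\langle H^{1/2}f,g\rangle| : g\in C_c^\infty,\, \|g\|_{p'}\le 1\}$.

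To unfold the left-hand side, I use that $H$ is self-adjoint and positive on $L^2(M)$ and that, for $g\in C_c^\infty(M)$, the function $H^{-1/2}g$ is well-defined via the resolvent representation and belongs to the domain of $H^{1/2}$. Writing $H^{1/2} = H \cdot H^{-1/2}$ and integrating by parts on the $\Delta$-part of $H = \Delta + V_0/r^2$, I get
\[
    \langle H^{1/2}f, g\rangle
    = \langle Hf,\, H^{-1/2}g\rangle
    = \langle \nabla f,\, \nabla H^{-1/2}g\rangle
      + \Bigl\langle V_0 \tfrac{f}{r},\, r^{-1} H^{-1/2}g \Bigr\rangle.
\]
Since $V_0 \in L^\infty(Y)$ (because $Y$ is compact), H\"older's inequality then bounds the right-hand side by
\[
    \|\nabla f\|_p \, \|\nabla H^{-1/2}g\|_{p'}
    + \|V_0\|_\infty \, \|f/r\|_p \, \|r^{-1} H^{-1/2}g\|_{p'}.
\]
For $p' \in (p_0, p_1)$, Theorem~\ref{thmHL} gives $\|\nabla H^{-1/2}g\|_{p'} \lesssim \|g\|_{p'}$, while Lemmas~\ref{T1} and~\ref{T2T3} (applied to $T = T_1 + T_2 + T_3$) give $\|r^{-1} H^{-1/2}g\|_{p'} \lesssim \|g\|_{p'}$, since the intersection of the three ranges $1<p'<\infty$, $1<p'<p_1$ and $p_0<p'<\infty$ is exactly $(p_0, p_1)$. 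Combining these two bounds yields the required estimate with $\|\nabla_H f\|_p \sim \|\nabla f\|_p + \|f/r\|_p$.

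The main (and essentially only) obstacle is to justify the formal integration by parts: one needs $H^{-1/2}g$ and $\nabla H^{-1/2}g$ to have enough decay at $r\to 0$ and $r\to\infty$ that the boundary terms vanish when tested against the compactly supported $f$. This, however, follows directly from the kernel estimates in Lemma~\ref{leHL1} and Remark~\ref{remark1}: for $g \in C_c^\infty(M)$ supported in an annulus $\{a \le r \le b\}$, integrating the estimates for $G_1, G_2, G_3$ in $\lambda$ and then over the support of $g$ produces pointwise bounds on $H^{-1/2}g(z)$ and $\nabla H^{-1/2}g(z)$ with controlled polynomial decay in $r$ at both ends, so Fubini and the usual cut-off approximation of the identity justify the computation above. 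Once this is in place, the argument is purely formal and the proposition follows.
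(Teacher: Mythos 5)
Your proof is correct and follows essentially the same duality argument as the paper: unfold $\langle H^{1/2}f,g\rangle=\langle Hf,H^{-1/2}g\rangle$, integrate by parts, and apply H\"older together with the $L^{p'}$-boundedness of $\nabla H^{-1/2}$ (Theorem~\ref{thmHL}) and of $T=r^{-1}H^{-1/2}$ (Lemmas~\ref{T1} and~\ref{T2T3}) on the dual range $p'\in(p_0,p_1)$. The extra paragraph justifying the formal integration by parts via the kernel bounds is a welcome addition the paper leaves implicit, but it does not change the route.
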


\begin{proof}[Proof of Proposition~\ref{lemma2}]
Let $g\in C_c^\infty(M)$ with $\|g\|_{p'}\le 1$. By duality, it is enough to consider bilinear form $\langle H^{1/2}f, g\rangle$. Note that
\begin{align*}
    \langle H^{1/2} f, g \rangle &= \left\langle \int_0^\infty (H+\lambda^2)^{-1} Hf d\lambda, g \right\rangle = \left\langle Hf, \int_0^\infty (H+\lambda^2)^{-1} g d\lambda \right\rangle\\
    &= \left\langle \nabla f, \nabla \int_0^\infty (H+\lambda^2)^{-1} g d\lambda \right\rangle + \left\langle \frac{V_0}{r} f, r^{-1}\int_0^\infty (H+\lambda^2)^{-1} g d\lambda \right\rangle\\
    &\le \|\nabla f\|_p \|Rg\|_{p'} + C \left\|\frac{f}{r} \right\|_p \sum_{j=1}^3 \|T_j g\|_{p'}\\
    &\le C \|\nabla_H f\|_p \|g\|_{p'},
\end{align*}
where the last inequality follows by Theorem~\ref{thmHL}, Lemma~\ref{T1} and Lemma~\ref{T2T3}. The proof is complete. 

\end{proof}

To end this section, we give a proof for Corollary~\ref{cor2}. 

\begin{proof}[Proof of Corollary~\ref{cor2}]
By \eqref{d_H} and Theorem~\ref{thmHL}, it suffices to consider the $L^p$-boundedness of $T$. The result follows immediately from Lemma~\ref{T1} and Lemma~\ref{T2T3}.
\end{proof}

\section{Endpoint estimates: Proof of Theorem~\ref{thm1}}\label{sec3}

In this section we prove Theorem~\ref{thm1}; namely, we show that the Riesz transform $R$ is of \emph{restricted weak type} at both endpoint exponents $p_0$ and $p_1$. 
Throughout, “restricted weak type $(p,p)$’’ means that for all measurable sets $E\subset M$ and all $\alpha>0$,
\begin{equation*}
\mu\left( \left\{z: |R \mathbf{1}_{E}(z)| > \alpha \right\} \right) \le C \alpha^{-p} \mu(E),
\end{equation*}
or equivalently,
\begin{equation*}
    \| R f \|_{(p_i,\infty)} \lesssim \| f \|_{(p_i,1)},\quad \textrm{for}\quad i=0,1.
\end{equation*}

We use the parametrix decomposition from Section~\ref{sec2}: write
\[
R=\int_0^\infty \nabla (H+\lambda^2)^{-1}\,d\lambda
= R_1+R_2+R_3,
\]
where $R_i$ corresponds to $G_i$ and inherits the kernel bounds in Lemma~\ref{leHL1}. 
The near-diagonal term $R_1$ is a Calderón–Zygmund operator: the size and smoothness estimates for $\nabla G_1$ yield weak type $(1,1)$ and $L^2$-boundedness; hence $R_1$ is harmless for our endpoint analysis.

The off-diagonal terms $R_2$ and $R_3$ capture the conic asymptotics and determine the endpoints. Using the factorized kernel bounds for $G_2$ and $G_3$ in Lemma~\ref{leHL1} together with the Hardy-Littlewood-P\'olya type mapping criteria in Lemma~\ref{leHL3}, we obtain restricted weak type $(p_1,p_1)$ and $(p_0,p_0)$, respectively. 

Combining these estimates for $R_2$ and $R_3$ with the Calderón–Zygmund bounds for $R_1$ completes the proof of Theorem~\ref{thm1}.

\subsection{Lorentz-type endpoint estimate}

\begin{lemma}\label{leR}
$R$ is of restricted weak type $(p_0,p_0)$ and $(p_1,p_1)$.
\end{lemma}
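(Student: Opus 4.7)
The plan is to follow the parametrix decomposition $R=R_1+R_2+R_3$ introduced in Section~\ref{sec2} and treat each term separately. By Lemma~\ref{leHL1}, $R_1$ is of weak type $(1,1)$ and bounded on every $L^p$ for $1<p<\infty$; combined with the embeddings $L^p\hookrightarrow L^{p,\infty}$ and $L^{p,1}\hookrightarrow L^p$, this already yields restricted weak type at both endpoints $p_0$ and $p_1$, so $R_1$ contributes nothing new. The real work is concentrated on $R_3$ at the lower endpoint $p_0$ and $R_2$ at the upper endpoint $p_1$ (when $p_1<\infty$), and the two arguments are symmetric; I focus on $R_3$.

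For $R_3$ I would integrate the pointwise bound on $\nabla_z G_3$ from Lemma~\ref{leHL1} against $\lambda^{d-2}\,d\lambda$, splitting at $\lambda=r^{-1}$ and choosing $N>d/2+\mu_0$ so the tail converges. This yields a Hardy-type kernel estimate
\[
    |K_{R_3}(z,z')| \lesssim r^{-\alpha_1} r'^{-\beta_1}, \qquad \alpha_1 := 1+\tfrac{d}{2}+\mu_0,\quad \beta_1 := d-\alpha_1,
\]
supported where $r'\lesssim r$; here $p_0=d/\alpha_1$ is exactly the critical exponent appearing in Lemma~\ref{leHL3}. Crucially, this bound is pointwise in the angular variables $y,y'$, so integrating against $|f|$ collapses to the scalar function $F(r'):=\int_Y |f(r',y')|\,dh(y')$ and produces a one-dimensional weighted Hardy operator
\[
    |R_3 f(z)| \lesssim r^{-\alpha_1}\int_0^{r/4} r'^{\,\alpha_1-1}\, F(r')\,dr',
\]
whose right-hand side depends only on $r$. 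Consequently the distribution function of $R_3 f$ on $(M,d\mu)$ reduces, up to the factor $|Y|$, to that of a one-dimensional operator acting on $F$ with respect to $r^{d-1}dr$.

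The remaining task is to establish restricted weak type $(p_0,p_0)$ for this weighted Hardy operator. After the substitution $t=r^d$ (which turns $r^{d-1}dr$ into Lebesgue measure on $(0,\infty)$), the operator takes the form $\tilde G\tilde F(t)\sim t^{-\sigma}\int_0^{ct} s^{\sigma-1}\tilde F(s)\,ds$ with $\sigma=\alpha_1/d=1/p_0$. When $p_0>1$ one has $\sigma<1$, so $s^{\sigma-1}$ is decreasing; the Hardy--Littlewood rearrangement inequality applied to an indicator $\tilde F=\mathbf{1}_A$ yields $\tilde G\mathbf{1}_A(t)\lesssim (|A|/t)^{\sigma}$, whence $|\{\tilde G\mathbf{1}_A>\alpha\}|\lesssim |A|\alpha^{-p_0}$. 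For the general $\tilde F$ arising from an indicator $\mathbf{1}_E$ on $M$ (which is bounded on $(0,\infty)$ but not itself an indicator), the same rearrangement together with the Markov-type bound $\tilde F^*(s)\le \min(\|\tilde F\|_\infty,\|\tilde F\|_1/s)$ gives $\tilde G\tilde F(t)\lesssim (\mu(E)/t)^\sigma$, which produces the required estimate. The degenerate case $p_0=1$ (which occurs when $\mu_0\ge d/2-1$) is easier: the kernel bound specializes to $|K_{R_3}|\lesssim r^{-d}$ on its support, and weak type $(1,1)$ follows directly from $|R_3 \mathbf{1}_E(z)|\lesssim r^{-d}\mu(E)$.

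The argument for $R_2$ at $p_1$ (when $p_1<\infty$, i.e.\ $\mu_0<d/2$) is symmetric: integration in $\lambda$ produces the dual Hardy kernel $r^{-(d/2-\mu_0)}r'^{-(d/2+\mu_0)}$ supported in $\{r'\gtrsim r\}$, and the 1D reduction yields the dual Hardy operator $t^{-\sigma_2}\int_{ct}^\infty s^{\sigma_2-1}\tilde F(s)\,ds$ with $\sigma_2=1/p_1<1$, which is of restricted weak type $(p_1,p_1)$ by a two-regime rearrangement analysis (splitting the bound into a small-$t$ regime controlled by $|A|^{\sigma_2}/t^{\sigma_2}$ and a large-$t$ regime controlled by $|A|/t$). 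The only real obstacle is this one-dimensional endpoint restricted weak type for the weighted Hardy operator --- a classical but essential computation --- together with the bookkeeping needed to pass from the abstract kernel bounds to this scalar model; beyond Lemma~\ref{leHL1} no further analysis of the resolvent is required.
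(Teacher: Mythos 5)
Your proposal is correct and follows the same overall strategy as the paper: the parametrix decomposition $R=R_1+R_2+R_3$, the Calder\'on--Zygmund treatment of $R_1$, the observation that $R_2$ is already $L^{p_0}$-bounded and $R_3$ is already $L^{p_1}$-bounded (by Lemma~\ref{leHL1} together with Lemma~\ref{leHL3}), and the identification of $R_3$ at $p_0$ and $R_2$ at $p_1$ as the only genuinely critical combinations. The single place where you diverge is in how the critical endpoint bound is closed. The paper applies H\"older's inequality in Lorentz spaces directly on $M$, writing
\[
    |R_3 f(z)| \lesssim r^{-\mu_0-d/2-1}\,\|f\|_{(p_0,1)}\,\|H_r\|_{(p_0',\infty)},
    \qquad H_r(z')=r'^{\mu_0-d/2+1}\mathbf{1}_{r'\le r},
\]
then computes $\|H_r\|_{(p_0',\infty)}\lesssim 1$ and finishes by observing $r^{-\mu_0-d/2-1}\in L^{p_0,\infty}(M)$. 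You instead integrate out the angular variables, substitute $t=r^d$, and reduce to the one-dimensional weighted Hardy operator $t^{-\sigma}\int_0^{ct}s^{\sigma-1}\tilde F(s)\,ds$, whose restricted weak type you verify by the Hardy--Littlewood rearrangement inequality and the Markov bound $\tilde F^*(s)\le \min(\|\tilde F\|_\infty,\|\tilde F\|_1/s)$. Both routes encode essentially the same computation; the paper's version is more compressed and stays at the Lorentz-space level on $M$, while yours makes the Hardy-operator model explicit and is self-contained at the level of distribution functions. You correctly note that $\tilde F$ arising from an indicator on $M$ is bounded (by $|Y|$) but is not itself a one-dimensional indicator, and your separate treatment of the degenerate case $p_0=1$ via $|K_{R_3}|\lesssim r^{-d}\,\mathbf{1}_{r'\le r}$ matches what the paper does.
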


\begin{proof}[Proof of Lemma~\ref{leR}]
Our assumption $p_1<\infty$ implies $0<\mu_0<\tfrac{d}{2}$. By spectral theory and the decomposition discussed above,
\[
\nabla H^{-1/2} \;=\; \int_0^\infty \nabla(H+\lambda^2)^{-1}\,d\lambda \;=\; R_1+R_2+R_3,
\]
where $R_i$ are defined as in \eqref{Ri}.

By Lemma~\ref{leHL1}, $R_1$ satisfies Calderón–Zygmund estimates; hence it is of weak type $(1,1)$ and bounded on $L^p$ for all $p\in(1,\infty)$. In particular, for the endpoints we will need,
\[
\|R_1\|_{L^{p_i,1}\to L^{p_i,\infty}} \le C, \qquad i=0,1.
\]
(The special case $p_0=1$ is included, since weak type $(1,1)$ coincides with restricted weak type $(1,1)$). Next, by Lemma~\ref{leHL3} (together with Lemma~\ref{leHL1}) we already have strong $L^{p}$-boundedness of $R_3$ at $p=p_1$ and of $R_2$ at $p=p_0$. It therefore remains to establish the complementary endpoint estimates:
\begin{itemize}
    \item if $p_0>1$, show that $R_3$ is of restricted weak type $(p_0,p_0)$ and $R_2$ is of restricted weak type $(p_1,p_1)$ if $p_1<\infty$.
    \item if $p_0=1$, show that $R_2, R_3$ are both of weak type $(1,1)$ (while $R_2$ remains of restricted weak type $(p_1,p_1)$ if $p_1<\infty$).
\end{itemize}
These two bounds, together with the estimate for $R_1$, yield Theorem~\ref{thm1}.

We first consider the endpoint $p_0$. By Lemma~\ref{leHL1} and Hölder's inequality,
\begin{align*}
    |R_3(f)(r,y)| &\lesssim \int_{Y\times (0,\infty)} r^{-\mu_0-d/2-1} r'^{\mu_0-d/2+1} \mathbf{1}_{r'\le r}(r,r') f(r',y') r'^{d-1} dr' dh(y')\\
    &\lesssim r^{-\mu_0-d/2-1} \|f\|_{(p_0,1)} \|H_r\|_{(p_0',\infty)},
\end{align*}
where $H_r(r',y') = r'^{\mu_0-d/2+1} \mathbf{1}_{r'\le r}(r,r')$. 

Suppose first $0<\mu_0<d/2-1$, i.e., $p_0=\frac{d}{1+d/2+\mu_0}$. A straightforward calculation yields 
\begin{align*}
\|H_r\|_{(p_0',\infty
)}^{p_0'} &= \sup_{\lambda>0} \lambda^{p_0'} \mu\left(\{M; (r')^{\mu_0-d/2+1} \mathbf{1}_{r'\le r}(r,r')>\lambda\}\right)\\
&\lesssim \sup_{\lambda>0} \begin{cases}
    \lambda^{\frac{d}{d/2-1-\mu_0}} \lambda^{\frac{d}{\mu_0-d/2+1}}, & \lambda \ge r^{\mu_0-d/2+1}\\
    \lambda^{\frac{d}{d/2-1-\mu_0}} r^d, & \lambda < r^{\mu_0-d/2+1}
\end{cases}\\
&\lesssim 1,
\end{align*}
since $Y$ has finite measure and $p_0' = \frac{d}{d/2-1-\mu_0}$. It follows that 
\begin{equation*}
 |R_3(f)(r,y)|\lesssim r^{-\mu_0-d/2-1} \|f\|_{(p_0,1)}.
\end{equation*}
One checks easily that $r^{-\mu_0-d/2-1}\in L^{p_0,\infty}(M)$, which shows the restricted $(p_0,p_0)$ boundedness of $R_3$. Next, for $\mu_0 \ge d/2-1$ (i.e., $p_0=1$), we have to establish that $R_2, R_3$ are both of weak type $(1,1)$. Note that
\begin{align*}
    |R_2(f)(r,y)|&\lesssim \int_{Y\times (0,\infty)} r^{\mu_0-d/2} r'^{-\mu_0-d/2}\mathbf{1}_{r\le r'}(r,r') f(r',y') r'^{d-1}dr' dh(y')\\
    &\lesssim r^{\mu_0-d/2} \|f\|_{(p_1,1)} \|F_r\|_{(p_1',\infty)},
\end{align*}
where $F_r(r',y') = r'^{-\mu_0-d/2}\mathbf{1}_{r\le r'}(r,r')$.

It is then plain that
\begin{align*}
    |(R_2+R_3)(r,y)|&\lesssim \left(r^{\mu_0-d/2} \|F_r\|_\infty + r^{-\mu_0-d/2-1} \|H_r\|_\infty \right) \|f\|_1\\
    &\lesssim \left(r^{\mu_0-d/2} r^{-\mu_0-d/2} + r^{-\mu_0-d/2-1} r^{\mu_0-d/2+1} \right) \|f\|_1\\
    &\lesssim r^{-d} \|f\|_1,
\end{align*}
and the weak type $(1,1)$ statement is verified since $r^{-d}\in L^{1,\infty}(M)$.

To this end, we consider the case $p_1<\infty$ and claim that $R_2$ is restricted weak $(p_1,p_1)$. Directly, we have
\begin{align*}
    \|F_r\|_{(p_1',\infty)}^{p_1'} &= \sup_{\lambda>0} \lambda^{p_1'} \mu\left(\{M; r'^{-\mu_0-d/2}\mathbf{1}_{r\le r'}(r,r')>\lambda\}\right)\\
    &\lesssim \sup_{0<\lambda<r^{-\mu_0-d/2}} \lambda^{p_1'}(\lambda^{-\frac{d}{\mu_0+d/2}} - r^{d})\\
    &\lesssim 1,
\end{align*}
since $p_1'=\frac{d}{\mu_0+d/2}$. Thus $|R_2(f)(r,y)|\lesssim r^{\mu_0-d/2} \|f\|_{(p_1,1)}$ and the claim follows since $r^{\mu_0-d/2}\in L^{p_1,\infty}(M)$. The proof of Lemma~\ref{leR} is now complete.

\end{proof}

\begin{remark}\label{remark_cone}
We note that the above argument also applies to the special case $V=0$ (i.e.\ the case considered in \cite{HQLi}). As discussed in \cite[Section~5.5]{HL}, the contribution of the terms corresponding to $\mu_0$ (which in this case equals $\frac{d-2}{2}$) to the kernel of the Riesz transform defines a bounded operator on $L^p$ for all $p\in (1,\infty)$. Hence the proof of Lemma~\ref{leR} can be applied verbatim after replacing $\mu_0$ by $\mu_1$ (the square root of the second smallest eigenvalue). 

The corresponding statement is that the Riesz transform $\nabla \Delta^{-1/2}$ on the metric cone $M = (0,\infty)\times Y$ is of restricted weak type $(p_1,p_1)$, where
\begin{equation*}
    p_1 = \frac{d}{\max \left(\frac{d}{2}-\mu_1, 0\right)}.
\end{equation*}
\end{remark}

\subsection{Counterexamples for sharpness}

To complete the proof of Theorem~\ref{thm1}, we also need the negative (sharpness) statement. 
Following the idea of \cite{GH1}, we prove:

\begin{lemma}\label{lenegative}
For $i\in\{0,1\}$ and each $q>1$, there exists a function 
$f\in L^{p_i,q}(M)$ such that $|Rf(z)|=+\infty$ for almost every $z\in M$.
\end{lemma}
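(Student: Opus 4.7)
The plan is to exhibit the obstruction directly using the parametrix decomposition $R=R_1+R_2+R_3$ of Section~\ref{sec2}. By Lemma~\ref{leHL1}, $R_1$ is $L^p$-bounded for every $p\in(1,\infty)$, and by Lemma~\ref{leHL3} the contributions of the spherical modes $u_j$ with $j\ge 1$ in the off-diagonal kernels of $R_2$ and $R_3$ yield strongly $L^{p_i}$-bounded operators (since each $\mu_j>\mu_0$ strictly improves the exponents in the Hardy-type criterion). Consequently, the obstruction at $p_0$ is carried solely by the leading $j=0$ mode of $R_3$ (supported in $\{r'<r/4\}$), and symmetrically the obstruction at $p_1$ by the leading mode of $R_2$ (supported in $\{r<r'/4\}$). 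The first step is to isolate this leading mode by choosing $f(r',y'):=h(r')\,u_0(y')$, so that the orthogonality $\int_Y u_j\overline{u_0}\,dh=\delta_{j0}$ kills every higher-$j$ contribution in the expansions \eqref{formula1}--\eqref{formula2}.

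For the endpoint $p_0$ (generic case $p_0>1$), fix any $\beta\in(1/q,1]$ and set
\begin{equation*}
  f(r',y'):=r'^{-d/p_0}\bigl(\log(1/r')\bigr)^{-\beta}\,\mathbf{1}_{\{0<r'<1/e\}}\,u_0(y').
\end{equation*}
A standard decreasing-rearrangement computation gives $f^*(t)\sim t^{-1/p_0}(\log(1/t))^{-\beta}$, so that $f\in L^{p_0,q}(M)\setminus L^{p_0,1}(M)$ whenever $\beta q>1$ and $\beta\le 1$. Integrating $\lambda^{d-2}$ against the leading Bessel asymptotics of $\nabla_z G_3$ from \eqref{formula2} produces, on $\{r'<r/4\}$, a kernel of the form $c\,r^{-d/2-\mu_0-1}\,r'^{1-d/2+\mu_0}\,u_0(y)\,u_0(y')$. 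Multiplying by $f$ and the measure factor $r'^{d-1}$, the $r'$-exponent collapses to $-1$ (by the definition of $p_0$), so that for every $r>0$ and every $y\in Y$,
\begin{equation*}
  R_3 f(r,y)\;\sim\;c\,u_0(y)\,r^{-d/2-\mu_0-1}\int_0^{\min(r,1/e)}r'^{-1}\bigl(\log(1/r')\bigr)^{-\beta}\,dr'\;=\;+\infty,
\end{equation*}
since the logarithmic integral diverges at $r'=0^+$ precisely when $\beta\le 1$. Because $u_0>0$ on $Y$ and $h\ge 0$, the sign is unambiguous, so the pointwise-finite contributions of $R_1 f$ and the subleading part of $Rf$ cannot cancel the leading $+\infty$, and $|Rf(z)|=+\infty$ for almost every $z\in M$.

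The endpoint $p_1$ (assumed $<\infty$, so $\mu_0<d/2$) is treated symmetrically: take $f(r',y'):=r'^{-d/p_1}(\log r')^{-\beta}\mathbf{1}_{\{r'>e\}}u_0(y')$ with $\beta\in(1/q,1]$, use the leading $R_2$-kernel $\sim r^{-d/2+\mu_0}\,r'^{-d/2-\mu_0}\,u_0(y)\,u_0(y')$ on $\{r<r'/4\}$, and observe that the total $r'$-exponent again collapses to $-1$, turning the radial integral into $\int_{\max(r,e)}^\infty r'^{-1}(\log r')^{-\beta}dr'$, which diverges for every $r>0$ when $\beta\le 1$. The main technical obstacle in both cases is to ensure that the subleading Bessel tail together with the parametrix near-diagonal correction remain pointwise-a.e.\ finite and therefore cannot cancel the leading infinity; this is controlled by the uniform summability of the Bessel series in Lemma~\ref{leHL2} combined with the fact (via Lemma~\ref{leHL3}) that each $j\ge 1$ mode acts boundedly on $L^{p_i}$, hence producing a finite function almost everywhere for our $f$. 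The borderline case $p_0=1$ with $\mu_0\ge d/2-1$ (where the simple power-log ansatz already converges) can be forced by a dyadic superposition of such building blocks at scales $r_k\to 0$, exploiting the dilation invariance of the cone while keeping the sum in $L^{p_0,q}\setminus L^{p_0,1}$ by standard lacunary summation of logarithmic profiles.
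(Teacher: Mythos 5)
Your plan follows the paper's strategy in outline—reduce to the off-diagonal pieces, pick $f$ proportional to the ground-state eigenfunction $u_0$ so that orthogonality isolates the $j=0$ mode, then choose a power-log radial profile so that $f\in L^{p_i,q}$ but the resulting one-dimensional integral is logarithmically divergent—and the profile you choose matches the paper's up to cosmetic cutoffs. However, there is a genuine gap at the central step.

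Your argument produces the decisive divergence from the phrase ``Integrating $\lambda^{d-2}$ against the leading Bessel asymptotics of $\nabla_z G_3$ \ldots produces \ldots a kernel of the form $c\,r^{-d/2-\mu_0-1}r'^{1-d/2+\mu_0}u_0(y)u_0(y')$.'' But Lemma~\ref{leHL1} only supplies an \emph{upper} bound for $\nabla_z G_3$, and an upper bound cannot force divergence. To show $|Rf|=\infty$ one needs a quantitative, sign-definite \emph{lower} bound on the integrated kernel; in particular one must rule out cancellation from the radial dependence hidden in the $\lambda$-integral. The paper does this by first passing from the abstract estimate to the explicit formula
\[
\int_0^\infty I_{\mu_j}(\lambda \rho)K_{\mu_j}(\lambda \sigma)\,d\lambda
= C_{\mu_j}\,\sigma^{-1}\Bigl(\frac{\rho}{\sigma}\Bigr)^{\mu_j}\,{}_2F_1\!\left(\tfrac12+\mu_j,\tfrac12;\mu_j+1;\tfrac{\rho^2}{\sigma^2}\right),
\]
and then using monotonicity of ${}_2F_1$ with positive parameters to get $1\le {}_2F_1\le C$ on the relevant range $\rho/\sigma\le 1/4$. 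Moreover, the paper applies only the tangential derivative $r^{-1}\nabla_Y$, which produces a clean factor $|\nabla_Y u_0(y)|$ with no competing term, whereas your full-gradient version must control both the correction coming from the $r$-derivative of the hypergeometric factor and the term arising when $\partial_r$ hits the cutoff $\mathcal{X}(4r'/r)$; none of this is addressed, and the appeal to ``$u_0>0$'' does not resolve a possible sign change coming from those contributions.

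Two further, smaller issues. First, the assertion that the $j\ge 1$ modes ``yield strongly $L^{p_i}$-bounded operators'' is only true term-by-term; the operator norms of the individual modes need not be summable, so one cannot simply discard the whole tail that way. The paper instead extracts a tail $\sum_{j\ge J}$ whose kernel is controlled \emph{as a single series} via Hörmander's $L^\infty$ estimate together with Lemma~\ref{leHL2}, then argues about the finite head $\sum_{j<J}$ using orthogonality. Your Fubini/orthogonality shortcut can be made to work, but it still needs the same uniform summability input; as written, the interchange of $\sum_j$ and $\int_M$ is unjustified. Second, the treatment of the degenerate endpoint $p_0=1$ by ``dyadic superposition \ldots standard lacunary summation'' is not a proof and would need to be carried out explicitly before it could stand in for the estimate.
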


In particular, at the endpoints $p=p_i$ the restricted weak type bound of Theorem~\ref{thm1} is optimal in the Lorentz scale: $R$ is bounded on $L^{p_i,1}\!\to L^{p_i,\infty}$, but this fails for every larger Lorentz space $L^{p_i,q}$ with $q>1$.

\begin{proof}[Proof of Lemma~\ref{lenegative}]

By formulas \eqref{formula1} and \eqref{formula2}, the problematic terms (i.e. $R_2$, $R_3$) have expansion

\begin{align}\label{formula3}
    R_2f(z) = \nabla_z \int_M \int_0^\infty (rr')^{1-d/2} \sum_{j\ge 0} u_j(y) \overline{u_j(y')} I_{\mu_j}(\lambda r) K_{\mu_j}(\lambda r') \mathcal{X}\left( \frac{4r}{r'} \right)d\lambda f(z') dz', \\ \label{formula4}
    R_3f(z) = \nabla_z \int_M \int_0^\infty  (rr')^{1-d/2} \sum_{j\ge 0} u_j(y') \overline{u_j(y)} I_{\mu_j}(\lambda r') K_{\mu_j}(\lambda r) \mathcal{X}\left( \frac{4r'}{r} \right) d\lambda f(z') dz'.
\end{align}
By the $L^p$-boundedness of $R_2, R_3$, it suffices to check the unboundedness of $R_2$ near $p=p_1$ and $R_3$ near $p=p_0$. 

First, since $\nabla = (\partial_r, r^{-1} \nabla_Y)$, it suffices to consider the tangential derivative $r^{-1} \nabla_Y$. The corresponding entries of \eqref{formula3} and \eqref{formula4} for each $j$ are
\begin{align}\label{formula5}
     r^{-d/2} \nabla_Y u_j(y) \int_M \left(\int_0^\infty  I_{\mu_j}(\lambda r) K_{\mu_j}(\lambda r') d\lambda \right)  r'^{1-d/2} \overline{u_j(y')} \mathcal{X}\left( \frac{4r}{r'} \right) f(z') dz', 
\end{align}
and
\begin{align}\label{formula6}
     r^{-d/2} \nabla_Y\overline{u_j(y)} \int_M  \left(\int_0^\infty  I_{\mu_j}(\lambda r') K_{\mu_j}(\lambda r) d\lambda \right) r'^{1-d/2}  u_j(y') \mathcal{X}\left( \frac{4r'}{r} \right) f(z') dz'.
\end{align}

Second, we show that after extracting finitely many terms, the remainders of \eqref{formula5} and \eqref{formula6} act as bounded operators on $L^p$ for all $1<p<\infty$. Indeed, by Lemma~\ref{leHL2} and Hörmander's $L^\infty$-estimate \cite{Hormander1}, for $J\ge 0$,
\begin{align*}
    \sum_{j=J}^\infty |\nabla_Y u_j(y)| |I_{\mu_j}(\lambda r)| |K_{\mu_j} (\lambda r')| |u_j(y')| \mathbf{1}_{r\le \frac{2}{9}r'} \lesssim \sum_{j\ge J} \mu_j^d \left(\frac{4r}{r'}\right)^{\mu_j} e^{-\lambda r'/4} \lesssim r^{\mu_J} r'^{-\mu_J} e^{-c\lambda r'}.
\end{align*}
Similarly,
\begin{align*}
    \sum_{j=J}^\infty |\nabla_Y u_j(y)| |I_{\mu_j}(\lambda r')| |K_{\mu_j} (\lambda r)| |u_j(y')| \mathbf{1}_{r'\le \frac{2}{9}r} \lesssim \sum_{j\ge J} \mu_j^d \left(\frac{4r'}{r}\right)^{\mu_j} e^{-\lambda r/4} \lesssim r^{-\mu_J} r'^{\mu_J} e^{-c\lambda r}.
\end{align*}
It follows that
\begin{align}\label{formula7}
    \left| \sum_{j\ge J} \eqref{formula5} \right| \lesssim r^{\mu_J - d/2} \int_{r\le r'} r'^{-\mu_J - d/2} |f(z')| dz', 
\end{align}
and
\begin{align}\label{formula8}
    \left| \sum_{j\ge J} \eqref{formula6} \right| \lesssim r^{-1-d/2-\mu_J} \int_{r'\le r} r'^{1-d/2+\mu_J} |f(z')| dz'.
\end{align}
Now, by Lemma~\ref{leHL3}, it is clear that $\eqref{formula7}$ acts as a bounded operator on $L^p$ for $p< \frac{d}{\max \left( \frac{d}{2}-\mu_J, 0 \right)}$. Meanwhile, $\eqref{formula8}$ is $L^p$-bounded for $p>\frac{d}{\min\left( \frac{d+2}{2}+\mu_J, d \right)}$. It follows by Weyl's law that there exists $J\ge 0$ such that both \eqref{formula7} and \eqref{formula8} are $L^p$-bounded for all $1<p<\infty$. Hence, it is enough to treat the first $J$ terms of the sum.

Let $q>1$. Consider function $f(z') = u_0(y') r'^{\mu_0-d/2} \left( 1 + |\log{r'}| \right)^{-s} $ with $1/q < s\le 1$. We show that $R_2f$ diverges a.e. but $\|f\|_{(p_1,q)}<\infty$. Observe that by \cite[Page~684, 6.576.5]{GR},
\begin{align*}
    \int_0^\infty  I_{\mu_j}(\lambda r) K_{\mu_j}(\lambda r') d\lambda &= C_{\mu_j} r'^{-1} \left(\frac{r}{r'}\right)^{\mu_j}  {}_2 F_1 \left( \frac{1}{2}+\mu_j, \frac{1}{2}; \mu_j+1; \left(\frac{r}{r'}\right)^2 \right) \quad &&r<r', \\
    \int_0^\infty  I_{\mu_j}(\lambda r') K_{\mu_j}(\lambda r) d\lambda &= C_{\mu_j} r^{-1} \left(\frac{r'}{r}\right)^{\mu_j} {}_2 F_1 \left( \frac{1}{2}+\mu_j, \frac{1}{2}; \mu_j+1; \left(\frac{r'}{r}\right)^2 \right)\quad &&r>r',
\end{align*}
where $C_{\mu_j}>0$ is uniformly bounded depending only on $\mu_j$. Moreover, since $0<r/r'<1/4$ and ${}_2F_1(a,b;c;t)$ is increasing if $a,b,c>0$, we have $1={}_2F_1(a,b;c;0) \le {}_2F_1(a,b;c;(r/r')^2) \le {}_2F_1(a,b;c;1/16) = C$. Hence, by the orthogonality and normalization of $u_j$, we have for a.e. $(r,y)\in M$,
\begin{align*}
    \left|\sum_{j=0}^J \eqref{formula5} \right| \ge C r^{\mu_0 - d/2} |\nabla_Y u_0(y)| \int_0^\infty \mathcal{X}\left(\frac{4r}{r'}\right) \left( 1 + |\log{r'}| \right)^{-s} \frac{dr'}{r'},
\end{align*}
and the integral diverges since 
\begin{equation*}
    \int_0^\infty \mathcal{X}\left(\frac{4r}{r'}\right) \left( 1 + |\log{r'}| \right)^{-s} \frac{dr'}{r'} \ge \int_{8r}^\infty (1+|\log{t}|)^{-s} \frac{dt}{t} \ge \int_{|\log{8r}|}^\infty (1+x)^{-s} dx = \infty, \quad \forall s\le 1.
\end{equation*}
Meanwhile, one checks easily that for any $q>1$,
\begin{align*}
    \|f\|_{(p_1,q)}^p \sim \int_0^\infty \left( t^{1/p_1} f^*(t) \right)^q \frac{dt}{t} \lesssim \int_0^\infty t^{\frac{q}{p_1}} t^{\frac{q(2\mu_0-d)}{2d}} (1+|\log{t}|)^{-sq} \frac{dt}{t}\\
    = \int_0^\infty (1+x)^{-sq} dx < \infty
\end{align*}
since $s>1/q$. This confirms the unboundedness of $R_2$ acting on $L^{p_1,q}$ for any $q>1$.

The argument for the unboundedness of $R_3$ acting on $L^{p_0,q}$ is similar. Indeed, choose $f(z') = \overline{u_0(y')} r'^{-\frac{d+2}{2}-\mu_0} (1+|\log{r'}|)^{-s}$ with $1/q<s\le 1$. Then
\begin{align*}
    \left|\sum_{j=0}^J \eqref{formula6}\right| &\ge  r^{-1-d/2-\mu_0} |\nabla_Y \overline{u_0(y)}| \int_0^\infty \mathcal{X}\left(\frac{4r'}{r}\right) (1+|\log{r'}|)^{-s} \frac{dr'}{r'}\\
    &\ge r^{-1-d/2-\mu_0} |\nabla_Y \overline{u_0(y)}| \int_{|\log{8r'}|}^\infty (1+x)^{-s} dx = \infty
\end{align*}
for a.e. $(r,y)\in M$. On the other hand,
\begin{equation*}
    \|f\|_{(p_0,q)}^q \sim \int_0^\infty \left( t^{1/p_0} f^*(t) \right)^q \frac{dt}{t} \lesssim \int_0^\infty t^{\frac{q}{p_0}} t^{-\frac{q(2\mu_0+d+2)}{2d}} (1+|\log{t}|)^{-sq} \frac{dt}{t}
    = \int_0^\infty (1+x)^{-sq} dx < \infty
\end{equation*}
since again $s>1/q$, completing the proof.
\end{proof}

\section{Reverse Riesz problem: Proof of Theorem~\ref{thm2}}\label{sec4}

In this section, we prove our main result, Theorem~\ref{thm2} in two different ways with the same core idea, according to \eqref{asy3} and \eqref{asy4}, respectively. Note that the crux is to perform the \emph{harmonic annihilation} (integration by parts) at the right situation, which in our case, it should take place on $G_2$ part in the bilinear form from outside or on $G_3$ part from inside; see Subsection~\ref{sec1.3}.

\subsection{Exterior harmonic annihilation}\label{sec4.1}

To to so, we prove:


\begin{lemma}\label{HG}
We have kernel estimates:
\begin{equation}\label{HG1}
    |H_z G_2^\lambda(z,z')| \lesssim \lambda^{2-d} r^{-d} e^{-c\lambda r'} \mathbf{1}_{r'/8 \le r \le r'/4} + \mathbf{1}_{r\le r'/4}\begin{cases}
        \lambda^{4-d} r^{1-d/2+\mu_0} r'^{1-d/2-\mu_0}, & \lambda \le r'^{-1},\\
        \lambda^{3-d/2+\mu_0-N} r^{1-d/2+\mu_0} r'^{-N}, & \lambda \ge r'^{-1},
    \end{cases}
\end{equation}
and by symmetry,
\begin{equation}\label{HG2}
 |H_{z'} G_3^\lambda(z,z')| \lesssim \lambda^{2-d} r'^{-d} e^{-c\lambda r} \mathbf{1}_{r/8 \le r' \le r/4} + \mathbf{1}_{r'\le r/4}\begin{cases}
        \lambda^{4-d} r'^{1-d/2+\mu_0} r^{1-d/2-\mu_0}, & \lambda \le r^{-1},\\
        \lambda^{3-d/2+\mu_0-N} r'^{1-d/2+\mu_0} r^{-N}, & \lambda \ge r^{-1},
    \end{cases}
\end{equation}
for all $N>0$.

\end{lemma}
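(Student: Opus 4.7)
The plan is to apply the product rule for $H_z$ to the factorization $G_2^\lambda(z,z') = K^\lambda(z,z')\,\eta(r,r')$, where I set $K:=(H+1)^{-1}$, $K^\lambda(z,z'):=K(\lambda z,\lambda z')$, and $\eta(r,r'):=\mathcal{X}(4r/r')$. Since $V_0/r^2$ is a zero-order multiplication operator and $\eta$ depends only on radial coordinates, the product rule for $H$ (the potential neatly assembles into a single $Hu$-term) gives
$$H_z G_2^\lambda \;=\; (H_z K^\lambda)\,\eta \;+\; K^\lambda\,\Delta_z\eta \;-\; 2\bigl\langle \nabla_z K^\lambda,\,\nabla_z\eta\bigr\rangle.$$
Because $H$ is homogeneous of degree $-2$, we have $H_z[F(\lambda z)] = \lambda^2 (HF)(\lambda z)$; combining this with $(H+1)K = \delta$ and the fact that $\mathrm{supp}\,\eta \subset \{r \le (2/9)r'\}$ is disjoint from the diagonal, one gets $H_z K^\lambda = -\lambda^2 K^\lambda$ on this support, so
$$H_z G_2^\lambda(z,z') \;=\; -\lambda^2 G_2^\lambda(z,z') \;+\; K^\lambda\,\Delta_z\eta \;-\; 2\bigl\langle \nabla_z K^\lambda,\,\nabla_z\eta\bigr\rangle.$$

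Next I would estimate the three pieces separately. The first, $-\lambda^2 G_2^\lambda$, is supported in $\{r\le (2/9)r'\}\subset\{r\le r'/4\}$, and multiplying the kernel bounds for $G_2$ from Lemma~\ref{leHL1} by $\lambda^2$ produces exactly the $\mathbf{1}_{r\le r'/4}$ two-regime terms of \eqref{HG1}. The remaining two pieces are supported in the narrower transition zone $\{r'/8\le r\le (2/9)r'\}\subset\{r'/8\le r\le r'/4\}$, where $|\nabla_z\eta|\lesssim r^{-1}$, $|\Delta_z\eta|\lesssim r^{-2}$, and $r\sim r'$. On this zone I would read off Lemma~\ref{leHL1} and the companion bound for $|\nabla_z K^\lambda|$ at $r\sim r'$: this yields a bound of order $\lambda^{2-d}r^{-d}$ for $\lambda\le r'^{-1}$ and rapid (arbitrary polynomial) decay for $\lambda\ge r'^{-1}$. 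The latter can be absorbed into the factor $e^{-c\lambda r'}$, either by exchanging polynomial for exponential in the regime $\lambda r'\gtrsim 1$, or more directly from the asymptotics $K_\mu(t)\sim t^{-1/2}e^{-t}$ in the Bessel-function representation \eqref{formula1} for $K$.

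Finally, the second estimate \eqref{HG2} follows from \eqref{HG1} by the symmetry $K(z,z')=K(z',z)$ of the resolvent together with the observation that $G_3(z,z')=K(z,z')\,\mathcal{X}(4r'/r)$ transforms into $G_2$ upon swapping $z\leftrightarrow z'$. The main technical obstacle is the transition-zone estimate: Lemma~\ref{leHL1} is stated in a form best suited to the deep off-diagonal regime $r\ll r'$, so to deploy it on the band $r\sim r'$ one has to verify that the stated bounds survive and that genuine (at least rapid) decay in $\lambda r'$ holds there. This is where either the explicit Bessel-function representation \eqref{formula1} or the conormality statements in \cite[Prop.~4.6]{HL} must be invoked to close the argument. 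No cancellation beyond the product rule is required at this stage; the genuine \emph{harmonic annihilation} will be performed later at the level of a bilinear pairing, not at the level of this kernel estimate.
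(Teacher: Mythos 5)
Your conceptual decomposition is valid and considerably cleaner than the paper's. Homogeneity of $H$ together with the resolvent identity $(H+1)K=\delta$ gives $H_z K^\lambda = -\lambda^2 K^\lambda$ away from the diagonal, so one application of the Leibniz rule for $H$ (with the potential folded into the $Hu$-term, as you note) immediately identifies the main term $-\lambda^2 G_2^\lambda$ and two error terms supported in the transition band where $\nabla_z\eta$ is nonzero. The paper instead expands $G_2^\lambda$ in the Bessel series, applies $H_z$ term by term via the identity $\Delta_\alpha\mathcal{I}_\alpha(\lambda r)=-\lambda^2\mathcal{I}_\alpha(\lambda r)$, and tracks a cascade of cancellations ($III_1+IV=0$, $V_2+II_3=0$, and the $B$-term against the angular eigenvalue) to arrive at the same $-\lambda^2 G_2^\lambda$ plus two error terms \eqref{HG12}--\eqref{HG13}. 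Your route replaces that bookkeeping with a one-line operator identity; it buys transparency, and makes manifest that the only nontrivial work is in the transition band.

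There is, however, a genuine gap in the transition-zone estimate, and one of your two suggested fixes is wrong. You want $\lambda^{2-d}r^{-d}e^{-c\lambda r'}$ on $\{r'/8\le r\le (2/9)r'\}$, but on that band one has $d(z,z')\gtrsim r\sim r'$, so Lemma~\ref{leHL1} hands you only $|K^\lambda|\lesssim C_N\,\lambda^{-N}d(z,z')^{-N}$ for each fixed $N$, with $C_N$ unspecified; that is a weaker conclusion than exponential decay, and the claimed bound \eqref{HG1} genuinely asserts exponential decay, so you cannot ``absorb'' a polynomial tail into $e^{-c\lambda r'}$. Your second suggestion is the correct one and must actually be carried out: go back to the Bessel-series representation \eqref{formula1}, use the uniform bound $I_{\mu}(\lambda r)K_{\mu}(\lambda r')\lesssim (r/r')^{\mu}e^{-\lambda r'/4}$ valid for $r\le (2/9)r'$ (the exponential comes from $K_\mu$, the gap $r-r'/2\le -r'/4$ supplies the sign), and sum in $j$ using Lemma~\ref{leHL2} together with Hörmander's $L^\infty$ bound for the eigenfunctions $u_j$. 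That is precisely what the paper does to bound \eqref{HG12} and \eqref{HG13}; you would apply the same estimate to $K^\lambda\,\Delta_z\eta$ and $\nabla_z K^\lambda\cdot\nabla_z\eta$. With that step filled in, your argument closes and the symmetry claim \eqref{HG2} is immediate as you say.
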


\begin{proof}[Proof of Lemma~\ref{HG}]
By symmetry, it is enough to prove \eqref{HG1}, and then \eqref{HG2} follows directly by switching the roles of $r$ and $r'$.

For $G_2^\lambda(z,z')$, we have explicit formula:
\begin{equation}\label{HG_G2}
    \lambda^{2-d} (rr')^{1-d/2} \sum_{j\ge 0} u_j(y) \overline{u_j(y')} I_{\mu_j}(\lambda r) K_{\mu_j}(\lambda r') \mathcal{X}\left(\frac{4r}{r'}\right).
\end{equation}
Introduce notation $\mathcal{I}_\alpha(x) = x^{1-\alpha/2} I_{\alpha/2-1}(x)$. It is known that $\mathcal{I}_\alpha(\lambda r)$ solves equation
\begin{equation*}
    f''(r) + \frac{\alpha-1}{r} f'(r) = \lambda^2 f(r);
\end{equation*}
see for example \cite[Section~3.1]{HS1D} or \cite[Section~9.6.1]{AS}. Denote by $\Delta_\alpha$ the weighted 'Laplacian':
\begin{equation*}
    \Delta_\alpha = - \partial_r^2 - \frac{\alpha-1}{r} \partial_r.
\end{equation*}
Hence, $\Delta_\alpha \mathcal{I}_\alpha(\lambda r) = - \lambda^2 \mathcal{I}_\alpha(\lambda r)$. Below, to lighten notation we suppress the index and write $\mu$ and $u$ for $\mu_j$ and $u_j$, respectively.

Write each term in the sum \eqref{HG_G2} as
\begin{align*}
    \lambda^{2-d+\mu} r^{\mu+1-d/2} r'^{1-d/2} \mathcal{I}_{2\mu+2}(\lambda r) K_{\mu}(\lambda r') u(y) \overline{u(y')} \mathcal{X}\left(\frac{4r}{r'}\right)
\end{align*}
and then apply $H_z = \Delta_z + \frac{V_0(y)}{r^2} = -\partial_r^2 - \frac{d-1}{r} \partial_r + \frac{\Delta_Y + V_0}{r^2}$ to it, obtaining 
\begin{align}\label{HG_G2_expand}
 \lambda^{2-d+\mu} r'^{1-d/2} \Delta_d\left( \mathcal{I}_{2\mu+2}(\lambda r) r^{\mu+1-d/2}\mathcal{X}\left(\frac{4r}{r'}\right) \right) K_{\mu}(\lambda r') u(y) \overline{u(y')} \\ \nonumber
+ \lambda^{2-d} (rr')^{1-d/2} \frac{\left(\Delta_Y+V_0(y) \right) u(y)}{r^2} I_\mu(\lambda r) K_\mu(\lambda r') \overline{u(y')} \mathcal{X}\left(\frac{4r}{r'}\right).
\end{align}
Since $\left(\Delta_Y+V_0(y) + \left(\frac{d-2}{2}\right)^2 \right) u(y) = \mu^2 u(y)$, the second term above equals
\begin{equation}\label{HG_G2_expand2}
\lambda^{2-d} (rr')^{1-d/2} \frac{\mu^2 - \left(\frac{d-2}{2}\right)^2}{r^2} I_\mu(\lambda r) K_\mu(\lambda r') u(y) \overline{u(y')} \mathcal{X}\left(\frac{4r}{r'}\right).
\end{equation}
Next, we treat the first term of \eqref{HG_G2_expand}. By splitting $\Delta_d = \Delta_{2\mu+2} + \frac{2\mu+2-d}{r}\partial_r$, the $\Delta_d (\dots)$ part of the first term of \eqref{HG_G2_expand} can be expanded as
\begin{align*}
    \Delta_{2\mu+2} \mathcal{I}_{2\mu+2}(\lambda r) r^{\mu+1-d/2} \mathcal{X}\left(\frac{4r}{r'}\right) &+ \mathcal{I}_{2\mu+2}(\lambda r) \Delta_{2\mu+2} \left( r^{\mu+1-d/2} \mathcal{X}\left(\frac{4r}{r'}\right) \right)\\
    &-2 \frac{\partial}{\partial r} \left( \mathcal{I}_{2\mu+2}(\lambda r) \right) \frac{\partial}{\partial r} \left( r^{\mu+1-d/2} \mathcal{X}\left(\frac{4r}{r'}\right)\right)\\
    &+ \frac{2\mu+2-d}{r}  \frac{\partial}{\partial r} \left( \mathcal{I}_{2\mu+2}(\lambda r) \right) r^{\mu+1-d/2} \mathcal{X}\left(\frac{4r}{r'}\right)\\
    &+ \frac{2\mu+2-d}{r} \mathcal{I}_{2\mu+2}(\lambda r)  \frac{\partial}{\partial r} \left( r^{\mu+1-d/2} \mathcal{X}\left(\frac{4r}{r'}\right) \right)\\
    &:= I+II+III+IV+V.
\end{align*}
Obviously, 
\begin{equation}
    I = -\lambda^2 \mathcal{I}_{2\mu+2}(\lambda r) r^{\mu+1-d/2} \mathcal{X}\left(\frac{4r}{r'}\right).
\end{equation}
As for $II$, a straightforward computation yields that
\begin{align*}
    II &= -(\mu+1-d/2)(3\mu+1-d/2) \mathcal{I}_{2\mu+2}(\lambda r) r^{\mu-1-d/2} \mathcal{X}\left(\frac{4r}{r'}\right)\\
    &+ \mathcal{I}_{2\mu+2}(\lambda r) r^{\mu+1-d/2} \Delta_{2\mu+2} \left( \mathcal{X}\left(\frac{4r}{r'}\right) \right)\\
    &- (2\mu +2-d) \mathcal{I}_{2\mu+2}(\lambda r) r^{\mu-d/2} \frac{\partial}{\partial r} \left( \mathcal{X}\left(\frac{4r}{r'}\right) \right):= II_1+II_2+II_3.
\end{align*}
While for $III$, we have
\begin{align*}
    III &= - \frac{2\mu+2-d}{r} r^{\mu+1-d/2} \mathcal{X}\left(\frac{4r}{r'}\right) \frac{\partial}{\partial r} \left( \mathcal{I}_{2\mu+2}(\lambda r) \right)\\
    &- 2 r^{\mu+1-d/2} \frac{\partial}{\partial r} \left( \mathcal{I}_{2\mu+2}(\lambda r) \right) \frac{\partial}{\partial r} \left(\mathcal{X}\left(\frac{4r}{r'}\right) \right):= III_1+III_2.
\end{align*}
Note that $III_1 + IV = 0$.

Last but not least, 
\begin{align*}
    V &= (2\mu+2-d) (\mu+1-d/2) \mathcal{I}_{2\mu+2}(\lambda r) r^{\mu-1-d/2} \mathcal{X}\left(\frac{4r}{r'}\right)\\
    &+ (2\mu+2-d) \mathcal{I}_{2\mu+2}(\lambda r) r^{\mu-d/2} \frac{\partial}{\partial r} \left( \mathcal{X}\left(\frac{4r}{r'}\right) \right):= V_1+V_2.
\end{align*}
Observe that $V_2+II_3=0$.

Therefore,
\begin{align*}
    I&+II+III+IV+V \\
    &= -\lambda^2 \mathcal{I}_{2\mu+2}(\lambda r) r^{\mu+1-d/2} \mathcal{X}\left(\frac{4r}{r'}\right) -(\mu+1-d/2)(3\mu+1-d/2) \mathcal{I}_{2\mu+2}(\lambda r) r^{\mu-1-d/2} \mathcal{X}\left(\frac{4r}{r'}\right)\\
    &+ \mathcal{I}_{2\mu+2}(\lambda r) r^{\mu+1-d/2} \Delta_{2\mu+2} \left( \mathcal{X}\left(\frac{4r}{r'}\right) \right) - 2 r^{\mu+1-d/2} \frac{\partial}{\partial r} \left( \mathcal{I}_{2\mu+2}(\lambda r) \right) \frac{\partial}{\partial r} \left(\mathcal{X}\left(\frac{4r}{r'}\right) \right)\\
    &+ (2\mu+2-d) (\mu+1-d/2) \mathcal{I}_{2\mu+2}(\lambda r) r^{\mu-1-d/2} \mathcal{X}\left(\frac{4r}{r'}\right)\\
    &= -\lambda^2 \mathcal{I}_{2\mu+2}(\lambda r) r^{\mu+1-d/2} \mathcal{X}\left(\frac{4r}{r'}\right) - \left(\mu^2 - \left(\frac{d-2}{2}\right)^2\right) \mathcal{I}_{2\mu+2}(\lambda r) r^{\mu-1-d/2} \mathcal{X}\left(\frac{4r}{r'}\right)\\
    &+ \mathcal{I}_{2\mu+2}(\lambda r) r^{\mu+1-d/2} \Delta_{2\mu+2} \left( \mathcal{X}\left(\frac{4r}{r'}\right) \right) - 2 r^{\mu+1-d/2} \frac{\partial}{\partial r} \left( \mathcal{I}_{2\mu+2}(\lambda r) \right) \frac{\partial}{\partial r} \left( \mathcal{X}\left(\frac{4r}{r'}\right) \right)\\
    &:= A+B+C+D.
\end{align*}
Next, it is clear to see that
\begin{align*}
    \lambda^{2-d+\mu} r'^{1-d/2} \cdot B \cdot K_{\mu}(\lambda r') u(y) \overline{u(y')} + \eqref{HG_G2_expand2} = 0.
\end{align*}
We finally end up with expression
\begin{align}\nonumber
    H_z &G_2^\lambda(z,z') = \sum_{j\ge 0} \lambda^{2-d+\mu_j} r'^{1-d/2} \cdot (A+C+D) \cdot K_{\mu_j}(\lambda r') u_j(y) \overline{u_j(y')}\\ \label{HG11}
    &= -\lambda^2 \sum_{j\ge 0} \lambda^{2-d} (rr')^{1-d/2} I_{\mu_j}(\lambda r) K_{\mu_j}(\lambda r') u_j(y) \overline{u_j(y')} \mathcal{X}\left(\frac{4r}{r'}\right)\\ \label{HG12}
    &+ \lambda^{2-d} (rr')^{1-d/2} \sum_{j\ge 0} I_{\mu_j}(\lambda r) K_{\mu_j}(\lambda r') u_j(y) \overline{u_j(y')} \Delta_{2\mu_j+2} \left(  \mathcal{X}\left(\frac{4r}{r'}\right) \right)\\ \label{HG13}
    &- 2 \lambda^{2-d} (rr')^{1-d/2} \sum_{j\ge 0} u_j(y) \overline{u_j(y')} K_{\mu_j}(\lambda r') \left( -\mu_j r^{-1} I_{\mu_j}(\lambda r) + \lambda I'_{\mu_j}(\lambda r) \right) \frac{\partial}{\partial r} \left(\mathcal{X}\left(\frac{4r}{r'}\right)\right).
\end{align}
We call \eqref{HG11} the main term and \eqref{HG12}, \eqref{HG13} the error terms.

Note that the main term \eqref{HG11} equals $-\lambda^2 G_2^\lambda$. So its estimate follows directly from Lemma~\ref{leHL1}, i.e.,
\begin{align*}
    |\eqref{HG11}| \lesssim \mathbf{1}_{r\le r'}\begin{cases}
        \lambda^{4-d} r^{1-d/2+\mu_0} r'^{1-d/2-\mu_0}, & \lambda \le r'^{-1},\\
        \lambda^{3-d/2+\mu_0-N} r^{1-d/2+\mu_0} r'^{-N}, & \lambda \ge r'^{-1},
    \end{cases}
\end{align*}
for all $N>0$.

While for the error terms \eqref{HG12} and \eqref{HG13}, we have first that
\begin{align*}
    \left|\Delta_{2\mu_j+2} \left(  \mathcal{X}\left(\frac{4r}{r'}\right) \right)\right| &\lesssim (1+\mu_j)r^{-2} \mathbf{1}_{r'/8 \le r\le r'/4},\\
    \left|\frac{\partial}{\partial r} \left(\mathcal{X}\left(\frac{4r}{r'}\right)\right) \right| &\lesssim r^{-1} \mathbf{1}_{r'/8 \le r\le r'/4}.
\end{align*}
It then follows by Lemma~\ref{leHL2} and Hörmander's $L^\infty$-estimate \cite{Hormander1} that 
\begin{align*}
    |\eqref{HG12}| \lesssim \lambda^{2-d} (rr')^{1-d/2} r^{-2} \mathbf{1}_{r'/8 \le r\le r'/4} \sum_{j=0}^\infty (\mu_j^{d-1} + \mu_j^d) \left(\frac{4r}{r'}\right)^{\mu_j} e^{-\lambda r'/4}\\
    \lesssim \lambda^{2-d} r^{-d} e^{-\lambda r'/4} \mathbf{1}_{r'/8 \le r\le r'/4}.
\end{align*}
To this end, we estimate \eqref{HG13} by a similar manner:
\begin{align*}
    |\eqref{HG13}| \lesssim \lambda^{2-d} (rr')^{1-d/2} \sum_{j\ge 0} \mu_j^{d} 4^{\mu_j} (\lambda r')^{-\mu_j} e^{-\lambda r'/2} r^{-1} \mathbf{1}_{r \sim r'} | r^{-1} (\lambda r)^{\mu_j} e^{\lambda r} + \lambda (\lambda r)^{\mu_j-1} e^{\lambda r}|\\
    \lesssim \lambda^{2-d} r^{2-d} e^{-\lambda r'/4} \sum_{j\ge 0} \mu_j^d \left(\frac{4r}{r'}\right)^{\mu_j} r^{-2} \mathbf{1}_{r\sim r'}\\
    \lesssim \lambda^{2-d} r^{-d} e^{-c\lambda r'} \mathbf{1}_{r'/8 \le r \le r'/4}
\end{align*}
as desired.

\end{proof}

Next, we prove the positive part of Theorem~\ref{thm2} (except endpoint estimates).

\begin{proposition}\label{propexha}
The reverse inequality:
\begin{align*}
    \|H^{1/2}f\|_p \le C \| \nabla_H f\|_p
\end{align*}
holds for
\begin{equation*}
    \frac{d}{\min \left( \frac{d+4}{2}+\mu_0, d\right)} < p \le p_1' = \frac{d}{\min \left( \frac{d}{2}+\mu_0, d\right)}.
\end{equation*}

\end{proposition}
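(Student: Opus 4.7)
The plan is to proceed by duality. Given $g \in C_c^\infty(M)$ with $\|g\|_{p'} \le 1$, I use $H^{1/2} = H \cdot H^{-1/2}$ and the parametrix decomposition
\[
H^{-1/2} = \tfrac{2}{\pi}\sum_{i=1}^3 \int_0^\infty \lambda^{d-2} G_i^\lambda \, d\lambda
\]
to split the bilinear form $\langle H^{1/2}f, g\rangle = \langle Hf, H^{-1/2}g\rangle$ into three pieces $J_1 + J_2 + J_3$. For $i = 1, 3$, I expand
\[
\langle Hf, G_i^\lambda g\rangle = \langle \nabla f, \nabla_z G_i^\lambda g\rangle + \bigl\langle V_0\tfrac{f}{r},\, r^{-1} G_i^\lambda g\bigr\rangle
\]
and dominate $J_i$ by $\|\nabla f\|_p \|R_i g\|_{p'} + \|V_0\|_\infty \|f/r\|_p \|T_i g\|_{p'}$. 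By Lemma~\ref{leHL1} and Lemmas~\ref{T1}--\ref{T2T3}, $R_1, T_1$ are $L^{p'}$-bounded for every $1 < p' < \infty$, while $R_3, T_3$ are bounded for $p' > p_0$; since $p' \ge p_1 > p_0$ throughout our range, these contributions are $O(\|\nabla_H f\|_p \|g\|_{p'})$.

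The decisive piece is $J_2$: applying $R_2, T_2$ directly only yields $p > p_1'$, so the \emph{exterior harmonic annihilation} is needed. Instead of moving derivatives onto $G_2^\lambda$, I invoke the self-adjointness of $H$ to write
\[
\langle Hf, G_2^\lambda g\rangle = \langle f, H_z G_2^\lambda g\rangle = \bigl\langle \tfrac{f}{r},\, r\, H_z G_2^\lambda g\bigr\rangle.
\]
Since $G_2^\lambda$ is the (rescaled) resolvent kernel cut off by $\mathcal{X}(4r/r')$, which is supported where $z \ne z'$, the action $H_z G_2^\lambda$ consists of the \emph{main} term $-\lambda^2 G_2^\lambda$ (the harmonic annihilation) plus a commutator with the cutoff localized near $r \sim r'$; this is precisely the content of Lemma~\ref{HG}. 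Integrating against $\lambda^{d-2} d\lambda$ (splitting the $\lambda$-integration at $\lambda = r'^{-1}$ and choosing $N$ large), the main term produces a kernel bounded by $r^{1-d/2+\mu_0} r'^{-d/2-2-\mu_0} \mathbf{1}_{r \le r'/4}$; the extra factor $r$ brings the exponents to sum $-d$, so Lemma~\ref{leHL3} (with $\alpha = \tfrac{d-4}{2} - \mu_0$, $\beta = \tfrac{d+4}{2} + \mu_0$) yields $L^{p'}$-boundedness exactly when
\[
p' < \frac{d}{\max\!\left(\tfrac{d-4}{2} - \mu_0,\, 0\right)},
\]
which is the desired dual condition $p > d/\min\bigl(\tfrac{d+4}{2} + \mu_0,\, d\bigr)$. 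The commutator error, supported in $r'/8 \le r \le r'/4$, integrates (after the factor $r$) to a kernel $\lesssim r^{-d} \mathbf{1}_{r \sim r'}$, which is dominated pointwise by the Hardy--Littlewood maximal function on balls $B(z, 10r)$ exactly as in the proof of Lemma~\ref{T1}, and hence is $L^{p'}$-bounded for every $1 < p' < \infty$. Summing the three pieces yields $|\langle H^{1/2}f, g\rangle| \lesssim \|\nabla_H f\|_p \|g\|_{p'}$.

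The main technical obstacle is justifying the exchange of the $\lambda$-integration with the pairing and the integration by parts $\langle Hf, G_2^\lambda g\rangle = \langle f, H_z G_2^\lambda g\rangle$ at the level of each $\lambda$. This amounts to a density argument on Schwartz-type test functions together with uniform-in-$\lambda$ integrability, which follows from the combined pointwise bounds in Lemmas~\ref{leHL1} and~\ref{HG}; the bookkeeping must be done carefully to ensure that no boundary contribution is lost when $G_2^\lambda$ fails to be globally in the domain of $H$.
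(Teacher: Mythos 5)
Your proposal is correct and follows essentially the same route as the paper's proof of Proposition~\ref{propexha}: duality, the parametrix splitting into the $G_1,G_2,G_3$ pieces, moving $H$ onto $G_2^\lambda$ by integration by parts (exterior harmonic annihilation), then invoking Lemma~\ref{HG} and Lemma~\ref{leHL3} with $\alpha=\tfrac{d-4}{2}-\mu_0$, $\beta=\tfrac{d+4}{2}+\mu_0$, and controlling the commutator error (supported in $r\sim r'$) by a near-diagonal argument. The only cosmetic deviations are that you group terms by index $i$ instead of by whether they carry $\nabla f$ or $V_0 f/r$, and you estimate the commutator kernel $r^{-d}\mathbf 1_{r\sim r'}$ directly by the Hardy--Littlewood maximal function rather than via Hölder plus weak-type interpolation as in the paper's Lemma~\ref{le_EHA}; both work.
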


\begin{proof}[Proof of Proposition~\ref{propexha}]
By duality, it suffices to show that for any $g\in C_c^\infty(M)$ such that $\|g\|_{p'}\le 1$,
\begin{align*}
    \left| \langle H^{1/2}f, g \rangle\right| \le C \|\nabla_H f\|_p \|g\|_{p'}.
\end{align*}
Note that the LHS above is bounded by
\begin{align*}
\left|\left\langle \nabla f,  (R_1+R_3) g \right\rangle \right| &+ \left| \left\langle \frac{V_0}{r}f, r^{-1} \int_0^\infty \lambda^{d-2} (G_1^\lambda + G_3^\lambda)g d\lambda \right\rangle \right|\\
&+ \left| \left\langle f, \frac{V_0}{r^2} \int_0^\infty \lambda^{d-2} G_2^\lambda g d\lambda \right\rangle + \left\langle \nabla f, \int_0^\infty \lambda^{d-2} \nabla G_2^\lambda g d\lambda \right\rangle \right|:= I+II+III.
\end{align*}
The first two terms can be handled easily. Observe first that by Hölder's inequality 
\begin{align*}
I \le  \|\nabla f\|_p \|(R_1+R_3)g\|_{p'} \le C \|\nabla_H f\|_p \|g\|_{p'},\quad \forall 1<p<p_0'.
\end{align*}
Second, Lemma~\ref{T1} and Lemma~\ref{T2T3} guarantee
\begin{align*}
II \le C \left\| \frac{f}{r} \right\|_p \| (T_1+T_3)g\|_{p'}\le C \|\nabla_H f\|_p \|g\|_{p'},\quad \forall 1<p<p_0'.
\end{align*}
Next, by integration by parts
\begin{align*}
III = \left|\left\langle f, \int_0^\infty \lambda^{d-2} HG_2^\lambda g d\lambda \right\rangle \right|.
\end{align*}
Define
\begin{align*}
    \mathcal{T}: u \mapsto r \int_0^\infty \lambda^{d-2} \left(HG_2^\lambda u\right) d\lambda.
\end{align*}

\begin{lemma}\label{le_EHA}
$\mathcal{T}$ is bounded in $L^p$ for 
\begin{align*}
    1< p < \frac{d}{\max \left(\frac{d-4}{2}-\mu_0, 0\right)}.
\end{align*}
\end{lemma}

\begin{proof}[Proof of Lemma~\ref{le_EHA}]
Note that
\begin{align*}
\mathcal{T}u(z) &= r \int_0^\infty \int_M H_z G_2^\lambda(z,z') u(z') dz' \lambda^{d-2} d\lambda\\
&= r \int_M \left(\int_0^\infty H_z G_2^\lambda(z,z') \lambda^{d-2} d\lambda \right) u(z') dz'.
\end{align*}
By Lemma~\ref{HG}, for $r\le r'/4$, the inner integral is bounded by
\begin{align*}
\int_0^{r'^{-1}} \lambda^{2} r^{1-d/2+\mu_0} r'^{1-d/2-\mu_0} d\lambda &+ \int_{r'^{-1}}^\infty \lambda^{1+d/2+\mu_0-N} r^{1-d/2+\mu_0} r'^{-N} d\lambda\\
&+ \int_0^\infty \lambda^{2-d} r^{-d} e^{-c\lambda r} \mathbf{1}_{r'/8 \le r \le r'/4} \lambda^{d-2} d\lambda\\
&\lesssim r^{1-d/2+\mu_0} r'^{-2-d/2-\mu_0} + r^{-1-d}\mathbf{1}_{r'/8 \le r \le r'/4}.
\end{align*}
Therefore, it is clear that 
\begin{align*}
    |\mathcal{T}u(z)| \lesssim r^{2-d/2+\mu_0} \int_{r\le r'} r'^{-2-d/2-\mu_0} |u(z')| dz' +  r^{-d} \int_{r\sim r'} |u(z')| dz' := \mathcal{T}_1u(z) + \mathcal{T}_2u(z).
\end{align*}
It follows by Hölder's inequality that the second term above is bounded by $C r^{-d/p} \|u\|_p$, which is of weak type $(p,p)$ for all $1<p<\infty$. Indeed, for all $\tau >0$, we have
\begin{align*}
    \textrm{vol} \left( \left\{z\in M; |\mathcal{T}_2u(z)|>\tau    \right\}  \right) &\lesssim \textrm{vol} \left( \left\{z\in M; r^{-d/p}\|u\|_p >\tau    \right\}  \right)\\
    &\lesssim \textrm{vol} \left( \left\{z\in M; r \le \left(\frac{\|u\|_p}{\tau} \right)^{\frac{p}{d}}   \right\}  \right)\\
    &\lesssim \left(\frac{\|u\|_p}{\tau} \right)^p.
\end{align*}
Thus by interpolation that $\mathcal{T}_2$ is bounded on $L^p$ for all $1<p<\infty$.

As for $\mathcal{T}_1$, we apply Lemma~\ref{leHL3} and conclude that $\mathcal{T}_1$ is bounded on $L^p$ for 
\begin{align*}
    1< p < \frac{d}{\max \left(\frac{d-4}{2}-\mu_0, 0\right)},
\end{align*}
completing the proof.
\end{proof}

Now, by Lemma~\ref{le_EHA}, we conclude that
\begin{align*}
    |III| = \left| \left\langle \frac{f}{r}, \mathcal{T}g  \right\rangle   \right| \le \left\| \frac{f}{r} \right\|_p \|\mathcal{T}g\|_{p'} \lesssim \|\nabla_H f\|_p \|g\|_{p'},
\end{align*}
for $p$ in the desired range, which completes the proof of Proposition~\ref{propexha}.

\end{proof}

\subsection{Interior harmonic annihilation}\label{sec4.2}

In this section we give an alternative proof of the positive part of Theorem~\ref{thm2} (i.e., Proposition~\ref{propexha}) and an expected endpoint estimate \eqref{RRend}. Motivated by the discussion in Section~\ref{sec1.3}, we develop an \emph{interior harmonic annihilation} scheme. 

Note that by spectral theory, for $f\in C_c^\infty(M)$,
\begin{align*}
    H^{1/2}f &= H^{-1/2} Hf = \int_0^\infty (H+\lambda^{2})^{-1} Hf d\lambda = \int_0^\infty \int_M (H+\lambda^2)^{-1}(z,z') Hf(z') dz' d\lambda\\
    &= \int_0^\infty \int_M H_{z'}(H+\lambda^2)^{-1}(z,z') f(z') dz' d\lambda = \sum_{i=1}^2 \mathcal{O}_{i1}(\nabla f) + \sum_{i=1}^2 \mathcal{O}_{i2}\left(V_0 f/r'\right) + \mathcal{S}(f/r'),
\end{align*}
where
\begin{align*}
    \mathcal{O}_{i1}: u \mapsto \int_M \left(\int_0^\infty \nabla_{z'} G_i^\lambda(z,z') \lambda^{d-2} d\lambda \right)  u(z') dz', \quad i=1,2, \quad \forall u\in C_c^\infty(T^*M)
\end{align*}

\begin{align*}
    \mathcal{O}_{i2}:  u \mapsto \int_M \left(\int_0^\infty r'^{-1} G_i^\lambda(z,z') \lambda^{d-2} d\lambda \right) u(z') dz',\quad i=1,2,\quad \forall u\in C_c^\infty(M)
\end{align*}
and
\begin{align*}
    \mathcal{S}:  u \mapsto \int_M \left( r' \int_0^\infty H_{z'} G_3^\lambda(z,z') \lambda^{d-2} d\lambda \right) u(z') dz', \quad \forall u\in C_c^\infty(M).
\end{align*}

\begin{lemma}\label{O11}
$\mathcal{O}_{11}$ is bounded in $L^p$ for all $1< p<\infty$.
\end{lemma}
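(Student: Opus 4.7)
The plan is to identify $\mathcal{O}_{11}$ as the $L^2$-adjoint of $R_1$ and then invoke Lemma~\ref{leHL1}, which already guarantees the $L^p$-boundedness of $R_1$ for every $1 < p < \infty$.

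Concretely, for test data $u \in C_c^\infty(T^*M)$ and $g \in C_c^\infty(M)$, the pointwise estimates of Lemma~\ref{leHL1} for $G_1$ (applied with $j = 0$ and $j = 1$), together with the compact supports of $u$ and $g$, justify Fubini and the interchange of $\nabla_{z'}$ with integration in $z$. Hence
\begin{align*}
\langle \mathcal{O}_{11} u,\, g \rangle
&= \int_M u(z') \cdot \int_0^\infty \lambda^{d-2}\, \nabla_{z'}\!\left( \int_M G_1^\lambda(z,z')\, g(z)\, d\mu(z) \right) d\lambda \, d\mu(z').
\end{align*}
Since $H$ is self-adjoint with real potential $V_0$, and since the cutoff defining $G_1$ is invariant under swapping $r$ and $r'$, the kernel satisfies $G_1^\lambda(z, z') = G_1^\lambda(z', z)$. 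Relabeling variables, the bracketed quantity equals $R_1 g(z')$; therefore $\langle \mathcal{O}_{11} u,\, g \rangle = \langle u,\, R_1 g \rangle$, so $\mathcal{O}_{11}^{*} = R_1$.

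By Lemma~\ref{leHL1}, $R_1 : L^{p'}(M) \to L^{p'}(T^{*}M)$ is bounded for every $1 < p' < \infty$; duality then yields the boundedness of $\mathcal{O}_{11} : L^{p}(T^{*}M) \to L^{p}(M)$ for every $1 < p < \infty$. The only delicate point is the justification of Fubini and of the passage $\int g\, \nabla_{z'} G_1^\lambda\, d\mu = \nabla_{z'}\!\int g\, G_1^\lambda\, d\mu$; both follow from the absolutely integrable pointwise bounds in Lemma~\ref{leHL1} (with $j = 0, 1$), since $G_1$ is supported on $\{r/8 \le r' \le 8 r\}$ and the resulting $\lambda$-integrals converge uniformly in $(z, z')$ on compact sets away from the diagonal. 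No independent $L^2$ argument or Calderón--Zygmund verification is required, which is why the duality route seems cleaner than a direct kernel analysis (the pointwise bound $|K_{11}(z,z')|\lesssim d(z,z')^{-d}$ is not integrable on its own support, so the naive maximal-function argument used for $T_1$ in Lemma~\ref{T1} would not close).
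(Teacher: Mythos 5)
Your proof is correct and takes essentially the same route as the paper: both identify $\mathcal{O}_{11}^* = R_1$ via the (Hermitian) symmetry of $G_1$ and then apply Lemma~\ref{leHL1} with duality. The only small imprecision is that you assert $G_1^\lambda(z,z') = G_1^\lambda(z',z)$; since $H$ is self-adjoint the correct relation is $\overline{G_1^\lambda(z',z)} = G_1^\lambda(z,z')$ (Hermitian rather than plain symmetry), which is what the paper writes and is exactly what one needs for $\langle \mathcal{O}_{11}u, g\rangle = \langle u, R_1 g\rangle$ once the conjugate in the inner product is accounted for. Your extra remarks — the Fubini justification from the $j=0,1$ kernel bounds, and the observation that the $\mathcal{O}_{11}$ kernel has the Calder\'on--Zygmund singularity $d(z,z')^{-d}$ (so the fractional-integral/maximal argument used for $T_1$ would not close) — are accurate and give useful context that the paper leaves implicit.
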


\begin{proof}[Proof of Lemma~\ref{O11}]
Note that $G_1$ is self-adjoint. Indeed, 
\begin{align*}
    \overline{G_1(z',z)} &= \overline{(H+1)^{-1}(z',z)} \left( 1 - \mathcal{X}(4r'/r) - \mathcal{X}(4r/r') \right) \\
    &= (H+1)^{-1}(z,z') \left( 1 - \mathcal{X}(4r'/r) - \mathcal{X}(4r/r') \right) \\
    &= G_1(z,z').
\end{align*}
One infers that 
\begin{align*}
    \mathcal{O}_{11}^*g(z) = \nabla_z \int_M \int_0^\infty G_1(\lambda z, \lambda z') \lambda^{d-2} d\lambda g(z') dz' = R_1g(z),
\end{align*}
which is known to be $L^p$-bounded for all $1<p<\infty$. The result follows by duality. 

\end{proof}

\begin{lemma}\label{O21}
$\mathcal{O}_{21}$ is bounded on $L^p$ for all $1<p<p_0'$. In addition, at the endpoint $p=p_0'$, $\mathcal{O}_{21}$ is bounded on $L^{p_0',1} \to L^{p_0', \infty}$.
\end{lemma}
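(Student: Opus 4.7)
The plan is to derive a pointwise size bound on the kernel of $\mathcal{O}_{21}$ and then apply two tools already developed in the paper: Lemma~\ref{leHL3} for the interior $L^p$ range $1<p<p_0'$, and a Lorentz--H\"older argument in the spirit of the endpoint analysis of Lemma~\ref{leR} for the restricted weak type statement at $p=p_0'$.

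First I would write
$$
\mathcal{O}_{21}u(z) = \int_M K_{21}(z,z') \cdot u(z')\, dz', \qquad K_{21}(z,z') := \int_0^\infty \nabla_{z'} G_2^\lambda(z,z')\, \lambda^{d-2}\, d\lambda,
$$
and bound the $\lambda$-integral using the estimate \eqref{dG2} of Remark~\ref{remark1}. Splitting at $\lambda=r'^{-1}$, the low-frequency regime yields $r^{1-d/2+\mu_0}r'^{-1-d/2-\mu_0}$ after integrating $d\lambda$ from $0$ to $r'^{-1}$, and the high-frequency regime yields the same size once $N$ is chosen larger than $d/2+\mu_0$. Since $G_2$ is supported in $\{4r\le r'\}$, this produces
$$
|K_{21}(z,z')| \lesssim r^{1-d/2+\mu_0}\, r'^{-1-d/2-\mu_0}\, \mathbf{1}_{\{r\le r'\}}.
$$
With this estimate in hand, Lemma~\ref{leHL3} applied with $\alpha=\tfrac{d-2}{2}-\mu_0$ and $\beta = 1+\tfrac{d}{2}+\mu_0$ (so that $\alpha+\beta=d$ and $\beta>0$) delivers the full interior range $1<p<p_0' = d/\max(\alpha,0)$.

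For the restricted weak type at $p=p_0'$ (which is nontrivial exactly when $\mu_0 < (d-2)/2$, so that $p_0'<\infty$), I would follow the template of the proof of Lemma~\ref{leR}. Factoring out $r^{1-d/2+\mu_0}$ and applying the Lorentz--H\"older inequality to the remaining $z'$-integral gives
$$
|\mathcal{O}_{21}u(z)| \lesssim r^{1-d/2+\mu_0}\, \|u\|_{(p_0',1)}\, \|F_r\|_{(p_0,\infty)}, \qquad F_r(z'):=r'^{-1-d/2-\mu_0}\mathbf{1}_{\{r'\ge r\}}.
$$
A direct rearrangement computation (exactly analogous to the treatment of $F_r$ in the proof of Lemma~\ref{leR}, using $p_0 = d/(\tfrac{d+2}{2}+\mu_0)$ in this regime) shows $\|F_r\|_{(p_0,\infty)} \lesssim 1$ uniformly in $r$. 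Since $p_0' = d/(\tfrac{d-2}{2}-\mu_0)$, the function $r^{1-d/2+\mu_0}$ lies in $L^{p_0',\infty}(M)$, and combining these two facts yields $\|\mathcal{O}_{21}u\|_{(p_0',\infty)} \lesssim \|u\|_{(p_0',1)}$.

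I do not anticipate any serious obstacle here: the delicate analytic work is already absorbed into the conormal bound \eqref{dG2}, and Lemma~\ref{leR} provides a faithful template for the endpoint calculation. The only points requiring care are the exponent bookkeeping when matching to the hypotheses of Lemma~\ref{leHL3}, and checking that the $r$-power factored out at the endpoint lands in \emph{exactly} $L^{p_0',\infty}$, both of which are immediate from the identities $p_0 = d/(\tfrac{d+2}{2}+\mu_0)$ and $p_0'=d/(\tfrac{d-2}{2}-\mu_0)$.
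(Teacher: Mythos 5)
Your proof is correct and follows essentially the same route as the paper: integrate the conormal bound \eqref{dG2} in $\lambda$ (splitting at $r'^{-1}$) to get the kernel estimate $r^{1-d/2+\mu_0}r'^{-1-d/2-\mu_0}\mathbf{1}_{\{r\le r'\}}$, invoke Lemma~\ref{leHL3} for the open range $1<p<p_0'$, and then run the same Lorentz--H\"older factorization used for Lemma~\ref{leR} to obtain restricted weak type at $p_0'$. The exponent bookkeeping ($p_0=2d/(d+2+2\mu_0)$, $p_0'=2d/(d-2-2\mu_0)$ when $\mu_0<(d-2)/2$) checks out.
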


\begin{proof}[Proof of Lemma~\ref{O21}]
Recall Remark~\ref{remark1}
\begin{align*}
    |\nabla_{z'} G_2^\lambda(z,z')| \lesssim \begin{cases}
        \lambda^{2-d} r^{1-d/2+\mu_0} r'^{-d/2-\mu_0}, & \lambda \le r'^{-1},\\
        \lambda^{1-d/2+\mu_0-N} r^{1-d/2+\mu_0} r'^{-N-1}, & \lambda \ge r'^{-1},
    \end{cases}
\end{align*}
for all $N>0$.

Hence, by choosing $N$ large enough, we yield
\begin{align*}
    |\mathcal{O}_{21}(z,z')|\lesssim \int_0^{r'^{-1}} r^{1-d/2+\mu_0} r'^{-d/2-\mu_0} d\lambda + \int_{r'^{-1}}^\infty  \lambda^{d/2-1+\mu_0-N} r^{1-d/2+\mu_0} r'^{-N-1} d\lambda\\
    \lesssim r^{1-d/2+\mu_0} r'^{-1-d/2-\mu_0}.
\end{align*}
Apply Lemma~\ref{leHL3} with $\alpha = d/2-1-\mu_0$ to conclude that $\mathcal{O}_{21}$ is bounded on $L^p$ for
\begin{align*}
    1<p<\frac{d}{\max \left(\frac{d-2}{2}-\mu_0, 0\right)} = p_0'.
\end{align*}
Particularly, if $\frac{d-2}{2}>\mu_0$, we have endpoint estimate:
\begin{align*}
    |\mathcal{O}_{21}u(z)|&\lesssim  r^{1-d/2+\mu_0} \int_{r\le r'} r'^{-1-d/2-\mu_0} |u(z')| dz'\\
    &\lesssim  r^{1-d/2+\mu_0} \|u\|_{(p_0',1)} \| g_r\|_{(p_0,\infty)},
\end{align*}
where $g_r(r') = r'^{-1-d/2-\mu_0} \mathbf{1}_{r\le r'}(r')$.

Note that
\begin{equation*}
    \| g_r\|_{(p_0,\infty)}^{p_0} = \sup_{0<\lambda < r^{-1-d/2-\mu_0}} \lambda^{p_0} d_{g_r}(\lambda) \lesssim \sup_{0<\lambda < r^{-1-d/2-\mu_0}} \lambda^{p_0} \lambda^{-\frac{2d}{d+2+2\mu_0}} \sim 1.
\end{equation*}
The result follows immediately since $r^{1-d/2+\mu_0} \in L^{p_0', \infty}$.

\end{proof}

\begin{lemma}\label{O12}
$\mathcal{O}_{12}$ is bounded on $L^p$ for all $1 < p<\infty$.
\end{lemma}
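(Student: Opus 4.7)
The plan is to recycle, almost verbatim, the argument used for $T_1$ in Lemma~\ref{T1}, exploiting the fact that $G_1$ is supported near the diagonal $\{r\sim r'\}$, where the offending weight $r'^{-1}$ is comparable to $r^{-1}$. In spirit, the present lemma is equivalent to the boundedness of $T_1$ already established.

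First, I would write the kernel of $\mathcal{O}_{12}$ as
\[
K(z,z')\;=\;\int_0^\infty r'^{-1}\,G_1(\lambda z,\lambda z')\,\lambda^{d-2}\,d\lambda,
\]
and apply the pointwise bound on $G_1$ from Lemma~\ref{leHL1}. Splitting the $\lambda$-integral at $\lambda=d(z,z')^{-1}$ and exploiting the rapid decay for $\lambda \gtrsim d(z,z')^{-1}$ (choosing $N>d-1$ as in the proof of Lemma~\ref{T1}), one obtains
\[
|K(z,z')|\;\lesssim\;r'^{-1}\,d(z,z')^{1-d}.
\]

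Next, I would invoke the support condition for $G_1$: the cutoffs in its definition force $r/8\le r'\le 8r$, so $r'^{-1}\sim r^{-1}$ there, and the same elementary computation given in the proof of Lemma~\ref{T1} yields $d(z,z')<10r$ on this support. A dyadic decomposition of the ball $B(z,10r)$ then produces
\[
|\mathcal{O}_{12}u(z)|\;\lesssim\;r^{-d}\sum_{i\ge 0} 2^{i(d-1)}\int_{B(z,10r/2^i)}|u(z')|\,dz'\;\lesssim\;\mathcal{M}u(z),
\]
using the volume growth $\textrm{vol}(B(z,r))\sim r^d$, where $\mathcal{M}$ denotes the Hardy-Littlewood maximal operator.

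The Hardy-Littlewood maximal theorem then delivers the desired $L^p$-boundedness for all $1<p<\infty$. I do not anticipate any real obstacle here: the only new ingredient beyond the proof of Lemma~\ref{T1} is the trivial observation that on the support of $G_1$ the weight $r'^{-1}$ is comparable to $r^{-1}$, which converts the present problem into a direct copy of the one already handled for $T_1$.
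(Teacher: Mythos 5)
Your argument is correct and coincides with the paper's own proof: bound the $\lambda$-integral by $r'^{-1}d(z,z')^{1-d}$, use the support condition $r/8\le r'\le 8r$ to replace $r'^{-1}$ by $r^{-1}$, and then reduce to the maximal-function bound already carried out for $T_1$ in Lemma~\ref{T1}. Nothing further is needed.
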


\begin{proof}[Proof of Lemma~\ref{O12}]
Note that
\begin{align*}
    \int_0^\infty r'^{-1} G_1^\lambda(z,z') \lambda^{d-2} d\lambda &\lesssim r'^{-1} \int_0^{d(z,z')^{-1}} d(z,z')^{2-d} d\lambda \\
    &+ r'^{-1} \int_{d(z,z')^{-1}}^\infty \lambda^{d-2-N} d(z,z')^{-N} d\lambda \lesssim r^{-1} d(z,z')^{1-d},
\end{align*}
where we replace $r'^{-1}$ with $r^{-1}$ in the last inequality since $G_1$ is supported near the diagonal $\{r/8 \le r' \le 8r\}$. Hence,
\begin{align*}
    |\mathcal{O}_{12}u(z)| \lesssim r^{-1} \int_{\{r/8\le r' \le 8r\}} \frac{|u(z')|}{d(z,z')^{d-1}} dz'.
\end{align*}
The result follows by the proof of Lemma~\ref{T1}.

\end{proof}

\begin{lemma}\label{O22}
$\mathcal{O}_{22}$ is bounded on $L^p$ for all $1 < p<p_0'$. In addition, at the endpoint $p=p_0'$, $\mathcal{O}_{22}$ is bounded on $L^{p_0',1} \to L^{p_0', \infty}$.
\end{lemma}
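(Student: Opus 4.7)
The plan is to mirror, almost verbatim, the argument just used for $\mathcal{O}_{21}$ in Lemma~\ref{O21}. The reason is structural: the kernel of $\mathcal{O}_{22}$ differs from that of $\mathcal{O}_{21}$ only by replacing the tangential derivative $\nabla_{z'}$ by the multiplication operator $r'^{-1}$, and the size estimates for $G_2$ in Lemma~\ref{leHL1} and Remark~\ref{remark1} produce exactly the same pointwise bound in both cases.

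Concretely, I would first insert the pointwise estimate for $G_2^\lambda$ from Lemma~\ref{leHL1} into the definition of $\mathcal{O}_{22}$ and split the $\lambda$-integration at $\lambda = r'^{-1}$:
\begin{align*}
\int_0^\infty r'^{-1}\,|G_2^\lambda(z,z')|\,\lambda^{d-2}\,d\lambda
&\lesssim r'^{-1}\int_0^{r'^{-1}} r^{1-d/2+\mu_0}\, r'^{1-d/2-\mu_0}\,d\lambda\\
&\quad + r'^{-1}\int_{r'^{-1}}^\infty \lambda^{d/2-1+\mu_0-N}\, r^{1-d/2+\mu_0}\, r'^{-N}\,d\lambda\\
&\lesssim r^{1-d/2+\mu_0}\, r'^{-1-d/2-\mu_0},
\end{align*}
for any $N > d/2+\mu_0$. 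Since $G_2$ is supported in $\{r\le r'\}$, this yields the pointwise kernel bound
\begin{equation*}
|\mathcal{O}_{22}u(z)| \lesssim r^{1-d/2+\mu_0}\int_{\{r\le r'\}} r'^{-1-d/2-\mu_0}\,|u(z')|\,dz'.
\end{equation*}
Applying Lemma~\ref{leHL3} with $\alpha = d/2 - 1 - \mu_0$ and $\beta = 1 + d/2 + \mu_0$ (so that $\alpha+\beta = d$ and $\beta > 0$) then gives $L^p$-boundedness for every $p < d/\max(\alpha,0) = p_0'$, settling the strong-type part.

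For the endpoint $p=p_0'$, which is meaningful only when $\frac{d-2}{2} > \mu_0$, I would apply a Lorentz-type Hölder inequality in the $z'$-integration, obtaining
\begin{equation*}
|\mathcal{O}_{22}u(z)| \lesssim r^{1-d/2+\mu_0}\,\|u\|_{(p_0',1)}\,\|g_r\|_{(p_0,\infty)},\qquad g_r(r',y') := r'^{-1-d/2-\mu_0}\mathbf{1}_{\{r\le r'\}}.
\end{equation*}
A direct distribution-function computation, identical to the one carried out for $\mathcal{O}_{21}$, gives $\|g_r\|_{(p_0,\infty)} \lesssim 1$ uniformly in $r$. Combined with the fact that $r^{1-d/2+\mu_0} \in L^{p_0',\infty}(M)$ (its exponent matches $-d/p_0'$ against the measure $r^{d-1}\,dr\,dh$), this yields the desired $L^{p_0',1} \to L^{p_0',\infty}$ bound.

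I do not expect any genuinely new obstacle here; the only calculation requiring care is the uniform bound on $\|g_r\|_{(p_0,\infty)}$, for which the distribution function of $g_r$ must be tracked on the range $0 < \lambda < r^{-1-d/2-\mu_0}$ and checked to produce a constant independent of $r$ when raised to the power $p_0 = d/(1+d/2+\mu_0)$. The conceptual reason that the proof simply copies that of $\mathcal{O}_{21}$ is precisely the observation made after Lemma~\ref{T2T3}: for the polyhomogeneous conormal kernel $G_2$, the operations $\nabla_{z'}$ and $r'^{-1}$ are interchangeable at the level of $L^p$-mapping properties.
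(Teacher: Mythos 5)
Your proposal is correct and follows essentially the same route as the paper: insert the $G_2$ size bound from Lemma~\ref{leHL1}, integrate in $\lambda$ to get the factorized kernel bound $r^{1-d/2+\mu_0}r'^{-1-d/2-\mu_0}\mathbf{1}_{r\le r'}$, apply Lemma~\ref{leHL3} for the strong-type range, and transplant the Lorentz–H\"older argument from Lemma~\ref{O21} for the endpoint. The paper merely compresses this into ``a straightforward estimate'' plus a back-reference to Lemma~\ref{O21}, so you have simply spelled out the details.
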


\begin{proof}[Proof of Lemma~\ref{O22}]
A straightforward estimate yields that for $r\le 4r'$,
\begin{align*}
    |\mathcal{O}_{22}(z,z')|\lesssim r^{1-d/2+\mu_0} r'^{-1-d/2-\mu_0}
\end{align*}
It then follows by Lemma~\ref{leHL3} that $\mathcal{O}_{22}$ is $L^p$-bounded for 
\begin{equation*}
    1<p<\frac{d}{\max \left(\frac{d-2}{2}-\mu_0, 0\right)} = p_0'.
\end{equation*}
The endpoint result follows by the same argument from the proof of Lemma~\ref{O21}.
\end{proof}

\begin{lemma}\label{S}
$\mathcal{S}$ is bounded on $L^p$ for
\begin{equation*}
    p>\frac{d}{\min\left(\frac{d+4}{2}+\mu_0, d \right)}.
\end{equation*}
In addition, at the endpoint $p=\frac{d}{\min\left(\frac{d+4}{2}+\mu_0, d \right)}$, $\mathcal{S}$ is bounded on $L^{p,1} \to L^{p, \infty}$.
\end{lemma}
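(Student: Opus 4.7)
The plan is to reduce the analysis of $\mathcal{S}$ to a two–term kernel bound depending only on $(r,r')$ and then apply Lemma~\ref{leHL3} together with a weak-type maximal-function argument, in direct parallel with the proof of Lemma~\ref{le_EHA}. The endpoint will then be handled by a Lorentz-norm computation in the spirit of Lemma~\ref{O21} and Lemma~\ref{leR}.

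First, I would use the kernel estimate \eqref{HG2} from Lemma~\ref{HG}. Integrating against $\lambda^{d-2}\,d\lambda$ and splitting the integral at $\lambda=r^{-1}$ (choosing $N>2+d/2+\mu_0$ in the far piece), the off-diagonal region $r'\le r/4$ contributes
\begin{equation*}
    \int_0^\infty |H_{z'} G_3^\lambda(z,z')|\,\lambda^{d-2}\,d\lambda \lesssim r^{-2-d/2-\mu_0}\, r'^{\,1-d/2+\mu_0},
\end{equation*}
while on the boundary strip $r/8\le r'\le r/4$ the factor $e^{-c\lambda r}$ tames the $\lambda$-integral to $O(r^{-1-d})$ (using $r'\sim r$). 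After multiplying by $r'$ as in the definition of $\mathcal{S}$, this yields
\begin{equation*}
    |\mathcal{S}u(z)| \lesssim r^{-2-d/2-\mu_0}\int_{\{r'\le r\}} r'^{\,2-d/2+\mu_0}|u(z')|\,dz' + r^{-d}\int_{\{r'\sim r\}} |u(z')|\,dz' =: \mathcal{S}_1 u(z) + \mathcal{S}_2 u(z).
\end{equation*}

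Second, I would treat the two pieces separately. The kernel of $\mathcal{S}_1$ is of the form $r^{-\gamma}r'^{-\delta}\mathbf{1}_{\{r'<r\}}$ with $\gamma=\tfrac{d+4}{2}+\mu_0$ and $\delta=\tfrac{d-4}{2}-\mu_0$; these satisfy $\gamma+\delta=d$ and $\gamma>0$, so Lemma~\ref{leHL3} gives $L^p$-boundedness of $\mathcal{S}_1$ precisely for $p>\frac{d}{\min(\frac{d+4}{2}+\mu_0,\,d)}$. For $\mathcal{S}_2$, H\"older's inequality together with $\mathrm{vol}(\{r'\sim r\})\sim r^d$ yields $|\mathcal{S}_2 u(z)|\lesssim r^{-d/p}\|u\|_p$, which is of weak type $(p,p)$ for every $1<p<\infty$, hence bounded on $L^p$ for all such $p$ by interpolation, exactly as in the proof of Lemma~\ref{le_EHA}. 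This establishes the main $L^p$-statement.

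Third, for the endpoint $p_*:=\frac{d}{\min(\frac{d+4}{2}+\mu_0,\,d)}$, I would mimic the proofs of Lemma~\ref{O21} and Lemma~\ref{leR}. In the regime $\mu_0<\frac{d-4}{2}$ (so $p_*>1$), set $s:=2-d/2+\mu_0<0$ and $h_r(z'):=r'^{\,s}\mathbf{1}_{\{r'\le r\}}$. A direct distribution-function calculation, hinging on the algebraic identity $sp_*'+d=0$, shows $\|h_r\|_{(p_*',\infty)}\lesssim 1$ uniformly in $r$; Lorentz duality then yields $|\mathcal{S}_1 u(z)|\lesssim r^{-2-d/2-\mu_0}\|u\|_{(p_*,1)}$, and since $-2-d/2-\mu_0=-d/p_*$ we have $r^{-2-d/2-\mu_0}\in L^{p_*,\infty}$, closing the argument for $\mathcal{S}_1$ at the endpoint. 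In the complementary regime $\mu_0\ge\frac{d-4}{2}$, where $p_*=1$ and $s\ge 0$, one has $\int_{\{r'\le r\}} r'^{\,s}|u|\lesssim r^{s}\|u\|_1$, so both pieces combine to $|\mathcal{S}u(z)|\lesssim r^{-d}\|u\|_1$, and $r^{-d}\in L^{1,\infty}$ delivers the weak type $(1,1)$ bound. The main point to verify carefully is this endpoint bookkeeping, specifically the cancellation $sp_*'+d=0$ that makes $h_r$ lie in $L^{p_*',\infty}$ uniformly in $r$, and the matching identity $-2-d/2-\mu_0=-d/p_*$ placing the prefactor in $L^{p_*,\infty}$; everything else is a mechanical combination of Lemma~\ref{HG}, Lemma~\ref{leHL3}, and the weak-type argument already used on $\mathcal{S}_2$ in Lemma~\ref{le_EHA}.
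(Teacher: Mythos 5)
Your proposal is correct and follows essentially the same route as the paper: the same kernel bound from Lemma~\ref{HG}, the same two-term split into the off-diagonal piece (handled via Lemma~\ref{leHL3} with $\gamma=\tfrac{d+4}{2}+\mu_0$, $\delta=\tfrac{d-4}{2}-\mu_0$) and the diagonal-strip remainder (handled by H\"older and the weak-type/interpolation argument from Lemma~\ref{le_EHA}), and the same Lorentz-duality computation at the endpoint, split according to whether $\mu_0$ is above or below $\tfrac{d-4}{2}$. The bookkeeping identities $sp_*'+d=0$ and $-2-d/2-\mu_0=-d/p_*$ that you flag as the crucial checks are exactly what the paper verifies implicitly when computing $\|g_r\|_{(p',\infty)}$ and concluding $r^{-2-d/2-\mu_0}\in L^{p_*,\infty}$.
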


\begin{proof}[Proof of Lemma~\ref{S}]
By Lemma~\ref{HG}, one deduces kernel estimate
\begin{align*}
    |\mathcal{S}(z,z')| &\lesssim r' \mathbf{1}_{r'\le r/4} \int_0^{r^{-1}} \lambda^2 r^{1-d/2-\mu_0} r'^{1-d/2+\mu_0} d\lambda\\
    &+ r' \mathbf{1}_{r'\le r/4} \int_{r^{-1}}^\infty \lambda^{1+d/2+\mu_0-N} r^{-N} r'^{1-d/2+\mu_0} d\lambda\\
    &+ r'^{1-d} \mathbf{1}_{r\sim r'} \int_0^\infty e^{-c\lambda r} d\lambda \\
    &\lesssim r^{-2-d/2-\mu_0} r'^{2-d/2+\mu_0} \mathbf{1}_{r'\le r/4} +   r^{-d} \mathbf{1}_{r\sim r'}.
\end{align*}
Now, the second term above can be handled by Hölder's inequality and a weak type argument as in the proof of Lemma~\ref{le_EHA}, showing that it acts as a bounded operator on $L^p$ for all $1<p<\infty$. While for the first term, one applies Lemma~\ref{leHL3} once more to obtain that $\mathcal{S}$ is $L^p$-bounded for $p$ in the desired range.

In particular, for $p=\frac{d}{\min\left(\frac{d+4}{2}+\mu_0, d \right)}$, we have (the harmless error term $O(r^{-d} \mathbf{1}_{r\sim r'})$ has been omitted)
\begin{align*}
    |\mathcal{S}u(z)| &\lesssim r^{-2-d/2-\mu_0} \int_{r'\le r} r'^{2-d/2+\mu_0} |u(z')| dz'\\
    &\lesssim r^{-2-d/2-\mu_0} \|u\|_{(p,1)} \| g_r\|_{(p',\infty)},
\end{align*}
where $g_r(r') = r'^{2-d/2+\mu_0} \mathbf{1}_{r'\le r}$.

Easily, one computes
\begin{align*}
    \| g_r\|_{(p',\infty)} = \begin{cases}
        \|g_r\|_\infty = r^{2-d/2+\mu_0}, & \mu_0 \ge \frac{d-4}{2},\\
        \max \left( \sup_{0<\lambda<r^{2-d/2+\mu_0}} \lambda r^{d/p' }, \sup_{\lambda \ge r^{2-d/2+\mu_0}} \lambda \lambda^{\frac{1}{(2-d/2+\mu_0)p'}}  \right) = 1, & \mu_0 < \frac{d-4}{2}.
    \end{cases}
\end{align*}
Therefore, if $\mu_0\ge \frac{d-4}{2}$, then $|\mathcal{S}u(z)|\lesssim r^{-d}\|u\|_1$ and hence $\mathcal{S}$ is of weak type $(1,1)$ since $r^{-d}\in L^{1,\infty}$. While if $\mu_0<\frac{d-4}{2}$, then $|\mathcal{S}u(z)|\lesssim r^{-2-d/2-\mu_0}\|u\|_{(p,1)}$ and the result follows because $r^{-2-d/2-\mu_0}\in L^{p,\infty}$.

\end{proof}

We end this subsection by showing: 

\begin{proposition}\label{positive}
The reverse inequality:
\begin{align*}
    \|H^{1/2}f\|_p \le C \| \nabla_H f\|_p
\end{align*}
holds for
\begin{equation*}
    \frac{d}{\min \left( \frac{d+4}{2}+\mu_0, d\right)} < p < p_0'.
\end{equation*}
In addition, we have endpoint estimate
\begin{equation*}
    \|H^{1/2}f\|_{(p,\infty)} \le C \| \nabla_H f\|_{(p,1)}
\end{equation*}
for $p = \frac{d}{\min \left( \frac{d+4}{2}+\mu_0, d\right)}$, and if $p_0'<\infty$, then
\begin{equation*}
    \|H^{1/2}f\|_{(p_0',\infty)} \le C \| \nabla_H f\|_{(p_0',1)}.
\end{equation*}
\end{proposition}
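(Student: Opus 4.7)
The plan is to combine the decomposition introduced at the start of Subsection~\ref{sec4.2} with the five $L^p$-boundedness lemmas just established. I would begin by confirming that, via spectral calculus and self-adjointness,
\begin{align*}
    H^{1/2}f \;=\; H^{-1/2}Hf \;=\; \int_0^\infty\!\!\int_M H_{z'}(H+\lambda^2)^{-1}(z,z')\,f(z')\,dz'\,d\lambda,
\end{align*}
which, after splitting the resolvent as $G_1+G_2+G_3$, integrating by parts in $z'$ on the $G_1,G_2$ pieces, and keeping $H_{z'}$ on the $G_3$ piece, yields
\begin{align*}
    H^{1/2}f \;=\; \sum_{i=1}^{2}\mathcal{O}_{i1}(\nabla f)\;+\;\sum_{i=1}^{2}\mathcal{O}_{i2}\bigl(V_0 f/r\bigr)\;+\;\mathcal{S}(f/r).
\end{align*}
Here the integration by parts extracts $\mathcal{O}_{i1}$ paired with $\nabla f$ (from the $\Delta$ part of $H$) and $\mathcal{O}_{i2}$ paired with $V_0 f/r$ (from the $V_0/r^2$ part), while on the $G_3$ piece the \emph{interior harmonic annihilation} encoded in the second estimate of Lemma~\ref{HG} shows that $H_{z'}G_3^\lambda$ decays one power of $r'$ better than $G_3^\lambda$, producing the kernel of $r'\cdot H_{z'}G_3^\lambda$ paired with $f/r'$.

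For the interior range $\frac{d}{\min\bigl((d+4)/2+\mu_0,\,d\bigr)}<p<p_0'$, all five blocks are $L^p\to L^p$ bounded: Lemmas~\ref{O11} and~\ref{O12} give the full interval $(1,\infty)$ for $\mathcal{O}_{11},\mathcal{O}_{12}$; Lemmas~\ref{O21} and~\ref{O22} give $(1,p_0')$ for $\mathcal{O}_{21},\mathcal{O}_{22}$; and Lemma~\ref{S} gives the interval $\bigl(\frac{d}{\min((d+4)/2+\mu_0,d)},\,\infty\bigr)$ for $\mathcal{S}$. Since $V_0$ is bounded on the compact cross-section $Y$, $\|V_0 f/r\|_p\lesssim\|f/r\|_p$, and summing the five pieces gives
\begin{align*}
    \|H^{1/2}f\|_p\;\lesssim\;\|\nabla f\|_p+\|f/r\|_p\;\sim\;\|\nabla_H f\|_p.
\end{align*}

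For the endpoint statements, I would rely on the Lorentz inclusion $L^{p,1}\hookrightarrow L^p\hookrightarrow L^{p,\infty}$ (valid for $1<p<\infty$), which upgrades any strong $L^p\to L^p$ bound of the non-critical blocks to an $L^{p,1}\to L^{p,\infty}$ bound for free. At $p=p_0'<\infty$, Lemmas~\ref{O21} and~\ref{O22} supply the critical $L^{p_0',1}\to L^{p_0',\infty}$ bound while $\mathcal{O}_{11},\mathcal{O}_{12},\mathcal{S}$ remain strongly bounded. At $p=\frac{d}{\min((d+4)/2+\mu_0,\,d)}$, Lemma~\ref{S} supplies the critical bound, and the other four operators remain strongly bounded (this endpoint is strictly below $p_0'$, hence inside the strong range of $\mathcal{O}_{21},\mathcal{O}_{22}$). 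The pointwise domination $|\nabla f|,\,|f/r|\le|\nabla_H f|$ transfers to every Lorentz quasi-norm, and the quasi-triangle inequality in $L^{p,\infty}$ then closes both endpoint inequalities. The only case demanding slight extra care is when the left endpoint degenerates to $p=1$ (which happens when $\mu_0\ge(d-4)/2$): there one needs weak $(1,1)$ for each block separately, which Lemma~\ref{S} provides for $\mathcal{S}$, Schur's test provides for $\mathcal{O}_{21},\mathcal{O}_{22}$, and the Calder\'on--Zygmund structure of the kernels provides for $\mathcal{O}_{11},\mathcal{O}_{12}$ (the latter essentially by the maximal-function dominated pointwise bound in Lemma~\ref{T1}).

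The technical content has already been absorbed into the preceding lemmas, so what remains here is essentially bookkeeping. The only conceptual observation worth stressing is that at each endpoint exactly one block is \emph{critical}---namely $\mathcal{O}_{21},\mathcal{O}_{22}$ at the upper endpoint and $\mathcal{S}$ at the lower one---and the harmonic-annihilation gain of Lemma~\ref{HG} is precisely what pushes the critical threshold of $\mathcal{S}$ below the ``dual'' value $p_1'$ down to the improved value $\frac{d}{\min((d+4)/2+\mu_0,\,d)}$ predicted by \eqref{asy4}.
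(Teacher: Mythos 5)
Your proposal is correct and follows the same path as the paper: decompose $H^{1/2}f = \sum_i \mathcal O_{i1}(\nabla f) + \sum_i \mathcal O_{i2}(V_0 f/r) + \mathcal S(f/r)$, apply Lemmas~\ref{O11}--\ref{S} blockwise for the open range, and at each endpoint use the restricted weak type bound of the single critical block together with the strong bounds of the others via the Lorentz embedding $L^{p,1}\hookrightarrow L^p\hookrightarrow L^{p,\infty}$. The paper's own proof is more terse (it dispatches the endpoints with a one-line appeal to the endpoint lemmas and interpolation); your write-up fills in the bookkeeping the paper leaves implicit, in particular the degeneration to weak $(1,1)$ when $\mu_0\ge \frac{d-4}{2}$. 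One small imprecision: what gives weak $(1,1)$ for $\mathcal O_{21},\mathcal O_{22}$ is not Schur's test but the pointwise bound $|\mathcal O_{2i}u(z)|\lesssim r^{-d}\|u\|_1$ (obtained by crudely bounding the integral over $\{r\le r'\}$) combined with $r^{-d}\in L^{1,\infty}(M)$, exactly as in the paper's proof of Lemma~\ref{leR}; this is a Chebyshev-type argument rather than Schur.
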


\begin{proof}[Proof of Proposition~\ref{positive}]
Now, by Lemma~\ref{O11}-\ref{S}, we end up with estimate
\begin{align*}
    \|H^{1/2}f\|_p \lesssim \sum_{i=1}^2 \|\mathcal{O}_{i1}(\nabla f)\|_p + \sum_{i=1}^2 \| \mathcal{O}_{i2}(V_0f/r')\|_p + \|\mathcal{S}(f/r')\|_p \lesssim \|\nabla f\|_p + \left\| \frac{f}{r'} \right\|_p \lesssim \|\nabla_H f\|_p
\end{align*}
for all
\begin{equation*}
    \frac{d}{\min \left(d, \frac{d+4}{2}+\mu_0\right)} < p < \frac{d}{\max \left(0, \frac{d-2}{2}-\mu_0 \right)}. 
\end{equation*}
While for the endpoints $p=p_0', \frac{d}{\min\left( \frac{d+4}{2}+\mu_0, d \right)}$, a similar argument works by the endpoint estimates from Lemma~\ref{O21}, Lemma~\ref{O22}, Lemma~\ref{S} and interpolation. This completes the proof of Proposition~\ref{positive} and hence the positive part of Theorem~\ref{thm2}.
\end{proof}

\subsection{Unboundedness of the reverse inequality}\label{sec4.3}

By the argument in the previous section, to prove the sharpness of the lower threshold 
\[
p=\frac{d}{\min\!\left(d,\frac{d+4}{2}+\mu_0\right)},
\]
it suffices to analyze the sub-operator \(\mathcal S\) coming from the \(G_3\)-piece. 
Equivalently, we study the kernel of
\begin{equation*}
    \mathcal U_1 f(z)
    := \int_M \Biggl( \int_0^\infty H_{z'}\,G_3^\lambda(z,z')\,\lambda^{d-2}\,d\lambda \Biggr) f(z')\,d\mu(z'),
\end{equation*}
and construct \(f\) with \(\|\nabla_H f\|_{p}<\infty\) such that \(|\mathcal U_1 f(z)|=+\infty\) a.e. whenever 
\(p \le \dfrac{d}{\min\!\left(d,\frac{d+4}{2}+\mu_0\right)}\).

For the upper threshold \(p=p_0'\), it remains to examine the operator
\(\mathcal O_{21}(\nabla f)+\mathcal O_{22}(V_0 f/r')\), i.e.
\begin{equation*}
   \mathcal U_2 f(z)
   := \mathcal O_{21}(\nabla f)(z)+\mathcal O_{22}(V_0 f/r')(z)
   = \int_M \Biggl( \int_0^\infty H_{z'}\,G_2^\lambda(z,z')\,\lambda^{d-2}\,d\lambda \Biggr) f(z')\,d\mu(z').
\end{equation*}
We analyze its kernel and produce a counterexample \(f\) with \(\|\nabla_H f\|_{p}<\infty\) such that 
\(|\mathcal U_2 f(z)|=+\infty\) a.e. for all \(p\ge p_0'\).

We first show that the lower threshold cannot be improved in the strong \(L^p\) sense.

\begin{lemma}\label{lowerthrshold}
The reverse inequality
\begin{align*}
    \| H^{1/2}f\|_p \lesssim  \| \nabla f\|_p + \| f/r\|_p
\end{align*}
does not hold for $1<p\le \frac{d}{\min \left(d,\frac{d+4}{2}+\mu_0 \right)}$.
\end{lemma}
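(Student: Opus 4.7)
The plan is to exhibit a single test function $f\in C_c^\infty(M)$ with $\|\nabla_H f\|_p<\infty$ but $H^{1/2}f\notin L^p(M)$, which immediately falsifies the reverse inequality for $p\le p_*:=\frac{d}{\min(d,(d+4)/2+\mu_0)}$. Observe first that if $\mu_0\ge (d-4)/2$ then $p_*=1$ and the range $1<p\le p_*$ is empty, so I may assume $0<\mu_0<(d-4)/2$, giving $1<p_*<p_0'$. Using the decomposition from Subsection~\ref{sec4.2},
\[
H^{1/2}f = \sum_{i=1}^2\mathcal{O}_{i1}(\nabla f) + \sum_{i=1}^2\mathcal{O}_{i2}(V_0 f/r) + \mathcal{U}_1 f,
\]
Lemmas~\ref{O11}--\ref{O22} provide $L^p$-boundedness of all four $\mathcal{O}_{ij}$ for $1<p<p_0'$, which covers every exponent in question. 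Since $\nabla f$ and $f/r$ lie in $L^p$ for every $f\in C_c^\infty(M)$, the task reduces to producing a compactly supported smooth $f$ with $\mathcal{U}_1 f\notin L^p(M)$.

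Next I would take $f(r,y)=\phi(r)u_0(y)$, where $\phi\in C_c^\infty([1,2])$ is nonnegative and nontrivial and $u_0$ is the ground-state eigenfunction. Orthonormality of $\{u_j\}$ collapses the $j$-sum in the explicit formula \eqref{formula2} for $G_3$ to its $j=0$ term, and for $r$ sufficiently large (say $r>20$) the support of $\phi$ lies in $\{r'\le r/8\}$, where $\mathcal{X}(4r'/r)\equiv 1$ and all cutoff-commutator error terms from Lemma~\ref{HG} vanish on the support of $f$. Off the diagonal $H_{z'}G_3^\lambda=-\lambda^2 G_3^\lambda$, so $\mathcal{U}_1 f(r,y)$ reduces to
\[
-r^{1-d/2}\,\overline{u_0(y)}\int_0^\infty \lambda^2 K_{\mu_0}(\lambda r)\Bigl(\int_1^2 I_{\mu_0}(\lambda r')\phi(r')\,r'^{d/2}\,dr'\Bigr)d\lambda.
\]
A short Bessel-function asymptotic analysis (rescale $\tau=\lambda r$, use $I_{\mu_0}(x)\sim c\,x^{\mu_0}$ as $x\to 0$, and the convergence of $\int_0^\infty \tau^{2+\mu_0}K_{\mu_0}(\tau)\,d\tau$) then yields
\[
\mathcal{U}_1 f(r,y) \sim C\,\overline{u_0(y)}\,r^{-2-d/2-\mu_0}\quad\text{as }r\to\infty,
\]
with a nonzero constant $C$ proportional to $\int_1^2 r'^{d/2+\mu_0}\phi(r')\,dr'>0$.

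Finally, since $u_0\not\equiv 0$ forces $\int_Y|u_0(y)|^p\,dy>0$, integrating the tail yields
\[
\int_M |\mathcal{U}_1 f|^p\,dV \gtrsim \int_{20}^\infty r^{-(2+d/2+\mu_0)p+d-1}\,dr,
\]
which diverges exactly when $(2+d/2+\mu_0)p\le d$, i.e., for $p\le p_*$. Together with the $L^p$-bounded pieces from the reduction step, this forces $\|H^{1/2}f\|_p=\infty$ while $\|\nabla_H f\|_p<\infty$, contradicting the reverse inequality. The main obstacle will be the Bessel-function asymptotic: one has to check both that the leading coefficient $C$ is genuinely nonzero and that every subleading correction decays strictly faster than $r^{-2-d/2-\mu_0}$, so that it cannot cancel the leading tail. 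Both should follow because each correction arises either from higher Taylor orders of $I_{\mu_0}(\tau r'/r)$ (producing strictly smaller powers of $r$) or from the parametrix piece supported near the diagonal (invisible as $r\to\infty$), but making this quantitative is where the argument demands the most care.
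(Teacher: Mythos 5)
Your argument is correct and takes a genuinely different route from the paper's. Both proofs isolate the same operator $\mathcal{U}_1=\mathcal{S}(\cdot/r')$ as the culprit, but the test functions and the mechanism of failure are different. The paper chooses $f(r',y')=\overline{u_0(y')}\,r'^{-1-d/2-\mu_0}(1+|\log r'|)^{-s}\eta(r')$, which is singular at the cone tip and has $\|\nabla_H f\|_p$ only barely finite (via $s>1/p$); one then shows $\mathcal{H}f(z)=+\infty$ a.e.\ because of the logarithmic divergence of $\int_0^\sigma(1+|\log r'|)^{-s}\,dr'/r'$. You instead take a genuine $f\in C_c^\infty(M)$ supported in $r\in[1,2]$, so $\|\nabla_H f\|_p<\infty$ trivially for all $p$, and show that $\mathcal{U}_1 f$ has a slowly decaying tail $\sim C\,\overline{u_0(y)}\,r^{-2-d/2-\mu_0}$, which fails to be $L^p$-integrable at infinity precisely for $p\le p_*$. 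Your approach is in some ways cleaner: the orthonormality of $\{u_j\}$ immediately kills the $j$-sum (no need to extract a finite $J$ and estimate the tail as the paper does), and in fact for $r$ large the $G_1$- and $G_2$-contributions vanish identically on the support of $f$, so one does not even need the $L^p$-boundedness of the four $\mathcal{O}_{ij}$ --- $H^{1/2}f(z)=\mathcal{U}_1 f(z)$ exactly for $r>16$. The Bessel-function obstacle you flag does resolve cleanly: after rescaling $\tau=\lambda r$, the integral $\int_0^\infty\tau^{2+\mu_0}K_{\mu_0}(\tau)\,d\tau$ is a finite, strictly positive constant, and the correction from $I_{\mu_0}(\tau r'/r)-c(\tau r'/r)^{\mu_0}$ is $O((\tau/r)^{\mu_0+2})$ uniformly, giving a relative $O(r^{-2})$ error, while the tail $\tau>r/4$ is exponentially small for $r\geq4$, so the leading term cannot be cancelled. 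Two minor notes: to exploit $\int_Y u_j(y')\,\overline{u_j(y)}\cdot(\cdot)\,dh(y')=\delta_{j0}\,\overline{u_j(y)}$ you should take $f=\phi(r)\overline{u_0(y)}$ unless the $u_j$ are chosen real; and, unlike the paper's test function, yours does \emph{not} also prove the failure of the $L^{p_*,q}$ endpoint with $q>1$ recorded in Remark~\ref{remark3} (since $\nabla_H f\in L^{p_*,q}$ for every $q$ but $r^{-2-d/2-\mu_0}\in L^{p_*,\infty}$), so the paper's more delicate logarithmic test function does additional duty there.
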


\begin{proof}[Proof of Lemma~\ref{lowerthrshold}]
We only consider the case $d>\frac{d+4}{2}+\mu_0$ or there is nothing to prove. We make the following series of reductions. By Lemma~\ref{HG}, the kernel $H_z G_2^\lambda(z,z')$ has been calculated explicitly. Therefore, it follows by symmetry, one has
\begin{align}\label{HGz1}
    H_{z'}&G_3^\lambda(z,z') = -\lambda^2 \sum_{j\ge 0} \lambda^{2-d} (rr')^{1-d/2} I_{\mu_j}(\lambda r') K_{\mu_j}(\lambda r) u_j(y') \overline{u_j(y)} \mathcal{X}\left(\frac{4r'}{r}\right)\\ \label{HGz2}
    &+ \lambda^{2-d} (rr')^{1-d/2} \sum_{j\ge 0} I_{\mu_j}(\lambda r') K_{\mu_j}(\lambda r) u_j(y') \overline{u_j(y)} \Delta_{2\mu_j+2} \left(  \mathcal{X}\left(\frac{4r'}{r}\right) \right)\\ \label{HGz3}
    &- 2 \lambda^{2-d} (rr')^{1-d/2} \sum_{j\ge 0} u_j(y') \overline{u_j(y)} K_{\mu_j}(\lambda r) \left( -\mu_j r'^{-1} I_{\mu_j}(\lambda r') + \lambda I'_{\mu_j}(\lambda r') \right) \frac{\partial}{\partial r'} \left(\mathcal{X}\left(\frac{4r'}{r}\right)\right).
\end{align}

Again, we call \eqref{HGz2}, \eqref{HGz3} the error terms and \eqref{HGz1} the main term. Next, by a parallel estimate, the error terms has upper bound $O(\lambda^{2-d} r'^{-d} e^{-c\lambda r}) \mathbf{1}_{r/8 \le r' \le r/4}$, which contributes $O(r'^{-1-d}) \mathbf{1}_{r \sim r'}$ in the kernel $\mathcal{U}_1(z,z')$. Thus, it follows by a similar argument as in Lemma~\ref{le_EHA} that the operator
\begin{equation*}
    u \mapsto \int_M  \left( \int_0^\infty r' \left( \eqref{HGz2} + \eqref{HGz3} \right) \lambda^{d-2} d\lambda \right) u(z') dz'
\end{equation*}
acts boundedly on $L^p$ for all $1<p<\infty$. Therefore, Lemma~\ref{Hardy} implies that
\begin{align*}
    \left\| \int_M  \left( \int_0^\infty \left( \eqref{HGz2} + \eqref{HGz3} \right) \lambda^{d-2} d\lambda \right) f(z') dz' \right\|_p \le C \left\| \frac{f}{r} \right\|_p,\quad p\in (1,\infty).
\end{align*}
Hence, to prove Lemma~\ref{lowerthrshold}, it is enough to consider the main part (i.e., \eqref{HGz1}). Note that each term in the sum \eqref{HGz1} is $O(\lambda^{4-d} r^{1-d/2-\mu_j} r'^{1-d/2+\mu_j} e^{-\lambda r}) \mathbf{1}_{r'\le r/4}$; see Lemma~\ref{HG} for the parallel estimate. Therefore, each term of \eqref{HGz1} contributes $O( r^{-2-d/2-\mu_j} r'^{1-d/2+\mu_j}) \mathbf{1}_{r'\le r/4}$ in the kernel of $\mathcal{U}_1$, which, by Lemma~\ref{leHL3}, acts as a bounded operator on $L^p$ in the sense
\begin{align*}
    \left\| \int_M  \left( \int_0^\infty \left( \textrm{each term of}\quad \eqref{HGz1} \right) \lambda^{d-2} d\lambda \right) f(z') dz' \right\|_p \le C \left\| \frac{f}{r} \right\|_p.
\end{align*}
if
\begin{equation*}
    p > \frac{d}{\min\left( d, \frac{d+4}{2} + \mu_j \right)}.
\end{equation*}

Now, since $Y$ is compact, it follows by Weyl's law that $\mu_j \to \infty$ as $j\to \infty$, and there exists some $J>0$ such that for all $j>J$, one has $d\le \frac{d+4}{2}+\mu_j$. This observation allows us reducing the problem to find a function $f$ such that $\|\nabla f\|_p + \| f/r\|_p <\infty$ but 
\begin{align*}
\mathcal{H}f(z) := \sum_{j=0}^{J} \int_M (rr')^{1-d/2} \mathcal{X}\left(\frac{4r'}{r}\right) \left(\int_0^\infty I_{\mu_j}(\lambda r') K_{\mu_j}(\lambda r) \lambda^2 d\lambda\right) u_j(y') \overline{u_j(y)} f(r',y') r'^{d-1} dr'dy'
\end{align*}
blows up a.e.

Since $\frac{r'}{r}\le \frac{1}{4}<1$, the formula from \cite[Page~684, 6.576.5]{GR} guarantees that
\begin{equation*}
    \int_0^\infty I_{\mu_j}(\lambda r') K_{\mu_j}(\lambda r) \lambda^2 d\lambda = C_{\mu_j} r^{-3} \left(\frac{r'}{r}\right)^{\mu_j} {}_2 F_1\left( \mu_j + \frac{3}{2}, \frac{3}{2}; \mu_j+1; \left(\frac{r'}{r}\right)^2 \right),
\end{equation*}
where ${}_2F_1$ refers to the Gaussian hypergeometric function. Observe that $0<(r'/r)^2\le 1/16$. The monotonicity confirms that $1= {}_2F_1(a,b;c;0) \le {}_2F_1(a,b;c;(r'/r)^2)\le {}_2F_1(a,b;c;1/16) = C$ if $a,b,c>0$. Consider function $f(r',y') = \overline{u_0(y')} r'^{-1-d/2-\mu_0} \left(1+|\log{r'}|\right)^{-s} \eta(r')$, where $1/p < s \le 1$ and $\eta\in C_c^\infty((0,1])$ is a positive smooth cut-off function, which equals $1$ in $(0,1/2]$. By orthogonality, for $i\ne j$, we have $\langle u_j, u_i \rangle = 0$. Thus 
\begin{equation*}
|\mathcal{H}f(z)| \ge C r^{-2-d/2-\mu_0} |\overline{u_0(y)}| \|u_0\|_2^2 \int_0^\infty \mathcal{X}\left(\frac{4r'}{r}\right) \left(1+|\log{r'}|\right)^{-s} \eta(r') \frac{dr'}{r'}.
\end{equation*}
Now, for a.e. $z=(r,y)$, the above integral is bounded below by (set $\sigma = \min \left(\frac{1}{2}, \frac{r}{8} \right)$ and recall that $\mathcal{X}(t)=1$ for $0<t\le 1/2$)
\begin{align*}
    \int_0^\sigma \left(1+|\log{r'}|\right)^{-s} \frac{dr'}{r'} = \int_{|\log{\sigma}|}^\infty (1+t)^{-s} dt = \infty
\end{align*}
since $s\le 1$, which implies that $\mathcal{H}f(z)$ blows up for a.e. $z=(r,y)$.

Next, we check that indeed $\|\nabla f\|_p + \|f/r\|_p <\infty$ for $p\le \frac{d}{\frac{d+4}{2}+\mu_0}$. Note that by Hörmander's $L^\infty$-estimate,
\begin{align*}
    |\nabla f(z)| \lesssim \left|\frac{d}{dr} \left( \eta(r) r^{-1-d/2-\mu_0} (1+|\log{r}|)^{-s} \right) \right| + \left| 
\eta(r) r^{-2-d/2-\mu_0} (1+|\log{r}|)^{-s} \right|  \\
\lesssim r^{-2-d/2-\mu_0}(1+|\log{r}|)^{-s} \mathbf{1}_{(0,1]}(r).
\end{align*}
We compute
\begin{align*}
    \|\nabla f\|_p^p &\lesssim \int_0^1 r^{-p\left( 2+\frac{d}{2}+\mu_0 \right)+d-1} (1+|\log{r}|)^{-sp} dr\\
    &\sim \int_0^\infty e^{-t\left[ d - p\left( \frac{d+4}{2} + \mu_0 \right)\right]} (1+t)^{-sp} dt.
\end{align*}
Obviously, for $1\le p < \frac{d}{\frac{d+4}{2}+\mu_0 }$, the integral converges absolutely. As for the special case $p = \frac{d}{\frac{d+4}{2}+\mu_0 }$, the integral remains finite since $s>1/p$. The estimate for $\|f/r\|_p$ is similar. The proof is now complete.
\end{proof}

\begin{remark}\label{remark3}
A direct computation shows that, for any \(1<q\le \infty\) and any \(s\in(1/q,1]\), our counterexample:
\[
f(z)=\overline{u_0(y)}\, r^{-1-\frac{d}{2}-\mu_0}\,\bigl(1+|\log r|\bigr)^{-s}\,\eta(r)
\]
satisfies property: $|\nabla_H f|\in L^{p,q}$ with $p=\frac{d}{\frac{d+4}{2}+\mu_0}$.

This establishes the sharpness of the Lorentz-type endpoint estimate:
\[
\|H^{1/2}f\|_{(p,\infty)} \le C\,\|\nabla_H f\|_{(p,q)},
\qquad p=\frac{d}{\frac{d+4}{2}+\mu_0},
\]
in the sense that the above inequality holds for $q=1$ but fails for every $q>1$.
\end{remark}

Next, we verify that the upper threshold is also optimal in the sense of $L^p$ norm.

\begin{lemma}\label{upperthrshold}
The reverse inequality
\begin{align*}
    \| H^{1/2}f\|_p \lesssim \| \nabla f\|_p + \| f/r\|_p
\end{align*}
does not hold for $p_0'= \frac{d}{\max\left(0, \frac{d-2}{2}-\mu_0\right)} \le p<\infty$.
\end{lemma}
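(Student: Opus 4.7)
The plan is to mimic the structure of Lemma~\ref{lowerthrshold}, but now isolating the operator
\[
\mathcal{U}_2 f(z)=\int_M\!\Biggl(\int_0^\infty H_{z'}G_2^\lambda(z,z')\,\lambda^{d-2}\,d\lambda\Biggr) f(z')\,d\mu(z'),
\]
which, after integration by parts in $z'$, equals $\mathcal{O}_{21}(\nabla f)+\mathcal{O}_{22}(V_0 f/r')$. The symmetry $G_2(z,z')=\overline{G_3(z',z)}$ (a consequence of \eqref{formula1}--\eqref{formula2} together with the self-adjointness of $(H+1)^{-1}$) allows one to run the computation of Lemma~\ref{HG} essentially verbatim, but with the cutoff $\mathcal{X}(4r/r')$ now differentiated in $r'$ and with the Bessel eigenfunction identity applied to the $K_{\mu}$-factor instead of the $I_{\mu}$-factor. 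The outcome is a decomposition
\[
H_{z'}G_2^\lambda(z,z')=-\lambda^2\,G_2^\lambda(z,z')+\mathcal{E}^\lambda(z,z'),
\]
where $\mathcal{E}^\lambda$ is supported in $\{r'/8\le r\le 2r'/9\}$ with $|\mathcal{E}^\lambda|\lesssim \lambda^{2-d}r^{-d}e^{-c\lambda r'}$. After the $\lambda$-integration the error kernel has size $O(r^{-d-1}\mathbf{1}_{r\sim r'})$. Using $r\sim r'$ on its support to rewrite $r^{-d-1}\sim r^{-d}r'^{-1}$, a maximal-function bound gives $|\text{error}(z)|\lesssim \mathcal{M}(f/r)(z)$, so this piece is controlled by $\|f/r\|_p\le \|\nabla_H f\|_p$ for every $1<p<\infty$ and cannot rescue any counterexample.

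For the surviving main term I plug in the identity \cite[Page~684, 6.576.5]{GR}
\[
\int_0^\infty\lambda^2 I_{\mu_j}(\lambda r)K_{\mu_j}(\lambda r')\,d\lambda=C_{\mu_j}\,r'^{-3}\bigl(r/r'\bigr)^{\mu_j}{}_2F_1\!\bigl(\mu_j+\tfrac32,\tfrac32;\mu_j+1;(r/r')^2\bigr),
\]
so that the main contribution to $\mathcal{U}_2(z,z')$ reads
\[
-\sum_{j\ge 0}C_{\mu_j}\,r^{1-d/2+\mu_j}r'^{-2-d/2-\mu_j}\,u_j(y)\overline{u_j(y')}\,{}_2F_1(\cdots)\,\mathcal{X}(4r/r').
\]
Since ${}_2F_1$ is bounded above and below by positive constants on the support of $\mathcal{X}$, and since Weyl's law furnishes $J$ with $\mu_j>(d-2)/2$ for all $j\ge J$, factoring out a $r'^{-1}$ from the kernel and applying Lemma~\ref{leHL3} shows that the tail $j\ge J$ acts as an operator bounded by $\|f/r\|_p\le \|\nabla_H f\|_p$ for every $1<p<\infty$. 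The problem therefore reduces to the finite sum over $0\le j<J$.

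For the counterexample I test against the compactly supported family
\[
f_R(z')=u_0(y')\,r'^{2-d/2+\mu_0}\bigl(1+|\log r'|\bigr)^{-1}\eta_R(r'),
\]
where $\eta_R\in C_c^\infty((0,\infty))$ equals $1$ on $[2,R]$, vanishes outside $[1,2R]$, and has derivatives bounded uniformly in $R$. Orthonormality of $\{u_j\}$ kills every $j\neq 0$ term of the finite sum, and the surviving $j=0$ contribution satisfies, for $r\le 1$ and $R$ large,
\[
|\mathcal{U}_2 f_R(z)|\;\gtrsim\;r^{1-d/2+\mu_0}|u_0(y)|\int_{8r}^{R}\frac{dr'}{r'(1+|\log r'|)}\;\sim\;r^{1-d/2+\mu_0}|u_0(y)|\,\log\log R.
\]
A direct computation gives $\|\nabla_H f_R\|_p\le C$ uniformly in $R$: at $p=p_0'$ the relevant radial integral is $\int_1^\infty r^{-1}(1+|\log r|)^{-p_0'}\,dr<\infty$ because $p_0'>1$, and for $p>p_0'$ the radial exponent in the norm is already strictly less than $-1$. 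Integrating the pointwise lower bound over $r\in[1/2,1]$ and bounding the remaining bounded pieces $\mathcal{O}_{11}(\nabla f_R)+\mathcal{O}_{12}(V_0 f_R/r')+\mathcal{S}(f_R/r')$ by $C\|\nabla_H f_R\|_p$, one finds $\|H^{1/2}f_R\|_p\gtrsim\log\log R$, which destroys any putative inequality $\|H^{1/2}f\|_p\lesssim\|\nabla_H f\|_p$ as $R\to\infty$ for every $p\in[p_0',\infty)$.

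The main technical hurdle is the first step: the clean symmetric derivation of $H_{z'}G_2^\lambda=-\lambda^2 G_2^\lambda+\mathcal{E}^\lambda$ and the verification that the near-diagonal remainder is absorbed by $\|f/r\|_p$. The bookkeeping mirrors Lemma~\ref{HG} term by term, except that the cutoff derivatives now produce factors involving $K_{\mu}'(\lambda r')$ in place of $I_{\mu}'(\lambda r)$; the corresponding error estimates, however, enjoy exactly the same exponential decay that powered Lemma~\ref{HG}, and the rest of the argument then parallels the proof of Lemma~\ref{lowerthrshold}.
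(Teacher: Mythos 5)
Your proposal is correct and follows essentially the same strategy as the paper's proof: compute $H_{z'}G_2^\lambda$ explicitly (using the identity $\Delta_\alpha\mathcal K_\alpha(\lambda r')=-\lambda^2\mathcal K_\alpha(\lambda r')$ for the $K_\mu$-factor, exactly as the paper does with $\mathcal K_\alpha(x)=x^{1-\alpha/2}K_{|\alpha/2-1|}(x)$), split into a main term $-\lambda^2 G_2^\lambda$ plus near-diagonal cutoff errors, absorb the errors via a maximal-function bound, reduce the main term to a finite $j$-sum, and then test against a family $f_R(z')=u_0(y')\,r'^{2-d/2+\mu_0}(1+|\log r'|)^{-s}\eta_R(r')$ while invoking the Bessel integral formula from [GR 6.576.5] to exhibit divergence. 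The only cosmetic differences are that you fix $s=1$ (which is legitimate since $p_0'>2>1$, though the paper's free parameter $s\in(1/q,1]$ is what is needed to establish the sharpness of the Lorentz endpoint in Remark~4.14), and that your tighter exponential bound $e^{-c\lambda r'}$ yields the sharper extraction threshold $\mu_j\ge(d-2)/2$ versus the paper's $\mu_j\ge(d+4)/2$; both suffice for the finite-sum reduction and, in any case, orthogonality already kills every $j\ne 0$ term against $f_R$.
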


\begin{proof}[Proof of Lemma~\ref{upperthrshold}]
It suffices to show the case $\frac{d-2}{2}> \mu_0$, i.e., $p_0' = \frac{d}{\frac{d-2}{2}-\mu_0}$. We follow the same idea as in the proof of Lemma~\ref{lowerthrshold}. We compute the kernel of $H_{z'} G_2^\lambda(z,z')$ explicitly. Set $\mathcal{K}_\alpha(x) = x^{1-\alpha/2} K_{|\alpha/2-1|}(x)$. Then, similar to $\mathcal{I}_\alpha$, one has $\Delta_\alpha \mathcal{K}_\alpha(\lambda x) = - \lambda^2 \mathcal{K}_\alpha(\lambda x)$ for all $x>0$. Replace $u_j$ with $u$ and $\mu_j$ with $\mu$ for simplicity. Each term in the sum \eqref{HG_G2} can be written as
\begin{equation*}
    \lambda^{2-d+\mu} r^{1-\frac{d}{2}} r'^{1-\frac{d}{2}+\mu} u(y) \overline{u(y')} I_\mu(\lambda r) \mathcal{K}_{2\mu+2}(\lambda r') \mathcal{X}\left(\frac{4r}{r'}\right).
\end{equation*}
It follows by the same strategy as in Lemma~\ref{HG} that 
\begin{align}\label{uppermain}
    H_{z'}&G_2^\lambda(z,z') = - \lambda^{4-d} \sum_{j=0}^\infty (rr')^{1-\frac{d}{2}} I_{\mu_j}(\lambda r) K_{\mu_j}(\lambda r') u_j(y) \overline{u_j(y')} \mathcal{X}\left( \frac{4r}{r'} \right)\\ \label{uppererror1}
    &+ \lambda^{2-d} (rr')^{1-d/2} \sum_{j\ge 0} I_{\mu_j}(\lambda r) K_{\mu_j}(\lambda r') u_j(y) \overline{u_j(y')} \Delta_{2\mu_j+2, r'} \left(  \mathcal{X}\left(\frac{4r}{r'}\right) \right)\\ \label{uppererror2}
    &- 2 \lambda^{2-d} (rr')^{1-d/2} \sum_{j\ge 0} u_j(y) \overline{u_j(y')} I_{\mu_j}(\lambda r) \left( -\mu_j r'^{-1} K_{\mu_j}(\lambda r') + \lambda K'_{\mu_j}(\lambda r') \right) \frac{\partial}{\partial r'} \left(\mathcal{X}\left(\frac{4r}{r'}\right)\right),
\end{align}
where we add a footscript $r'$ in $\Delta_{2\mu_j+2, r'}$ to emphasize that the 'Laplacian' is acting on $r'$ variable. One checks easily
\begin{align*}
    \left| \Delta_{2\mu_j+2, r'} \left(  \mathcal{X}\left(\frac{4r}{r'}\right) \right) \right| &\lesssim (2\mu_j+1) r^{-2} \mathbf{1}_{r\sim r'},\\
    \left| \frac{\partial}{\partial r'} \left(\mathcal{X}\left(\frac{4r}{r'}\right)\right) \right| &\lesssim r^{-1} \mathbf{1}_{r\sim r'}.
\end{align*}
It is then plain that the error terms have uniform upper bound
\begin{equation*}
    |\eqref{uppererror1} + \eqref{uppererror2}| \lesssim \lambda^{2-d} r^{-d} e^{-\lambda r'/4} \mathbf{1}_{r\sim r'}.
\end{equation*}
Hence, a similar argument as in the proof of Lemma~\ref{le_EHA} (see also the argument below \eqref{HGz3}) yields that
\begin{align*}
    \left\| \int_M  \left( \int_0^\infty \left( \eqref{uppererror1} + \eqref{uppererror2} \right) \lambda^{d-2} d\lambda \right) f(z') dz' \right\|_p \le C \left\|\frac{f}{r} \right\|_p,\quad \forall p\in (1,\infty).
\end{align*}
Therefore, we may focus on the main term \eqref{uppermain}. Note that each term in the sum \eqref{uppermain} contributes $O( \lambda^{4-d} r^{1-d/2+\mu_j} r'^{1-d/2-\mu_j} e^{-\lambda r/4}) \mathbf{1}_{r\le r'/4}$ to the kernel $H_{z'}G_2^\lambda(z,z')$, and hence contributes $O(r^{-2-d/2+\mu_j} r'^{1-d/2-\mu_j}) \mathbf{1}_{r\le r'/4}$ to $\mathcal{U}_2(z,z')$. So, Lemma~\ref{leHL3} tells us
\begin{align}\label{inthesense}
    \left\| \int_M  \left( \int_0^\infty \left( \textrm{each term of}\quad \eqref{uppermain} \right) \lambda^{d-2} d\lambda \right) f(z') dz' \right\|_p \le C \left\| \frac{f}{r} \right\|_p.
\end{align}
if
\begin{equation*}
    p < \frac{d}{\max \left( 0, \frac{d+4}{2} - \mu_j \right)}.
\end{equation*}
Thus, by extracting finitely many terms from \eqref{uppermain}, say $\Tilde{J}$ terms, the remainder always acts boundedly in $L^p$ for all $1<p<\infty$ (in the sense \eqref{inthesense}). To this end, one only needs to show that there exists a function $f$ such that $\|\nabla f\|_p + \|f/r\|_p<\infty$ with $\Tilde{\mathcal{H}}f$ blows up a.e., where
\begin{equation*}
    \Tilde{\mathcal{H}}f(z) := \sum_{j=0}^{\Tilde{J}} \int_M (rr')^{1-d/2} \mathcal{X}\left(\frac{4r}{r'}\right) \left(\int_0^\infty I_{\mu_j}(\lambda r) K_{\mu_j}(\lambda r') \lambda^2 d\lambda\right) u_j(y) \overline{u_j(y')} f(r',y') r'^{d-1} dr'dy'.
\end{equation*}
By \cite[Page~684, 6.576.5]{GR}, for $\frac{r}{r'}\le \frac{1}{4} <1$,
\begin{equation*}
    \int_0^\infty I_{\mu_j}(\lambda r) K_{\mu_j}(\lambda r') \lambda^2 d\lambda = C_{\mu_j} r'^{-3} \left(\frac{r}{r'}\right)^{\mu_j} {}_2 F_1\left( \mu_j + \frac{3}{2}, \frac{3}{2}; \mu_j+1; \left(\frac{r}{r'}\right)^2 \right)
\end{equation*}
Let $\phi \in C_c^\infty([0,2])$ such that $0\le \phi \le 1$ and $\phi(t) = 1$ for $t\le 1$. For $R>0$ large, consider function $f_R(r',y') = u_0(y') r'^{2-d/2+\mu_0} (1+|\log{r'}|)^{-s} \Tilde{\eta}(r') \phi(r'/R) $ with $1/p<s\le 1$, where $\Tilde{\eta} \in C^\infty(\mathbb{R})$, non-negative, supports in $[2,\infty)$ and equals $1$ in $[4,\infty)$. It follows that
\begin{align*}
    &\Tilde{\mathcal{H}}f_R(z)\\
    &= C_{\mu_0} r^{1-\frac{d}{2}+\mu_0} u_0(y) \|u_0\|_2^2 \int_0^\infty \mathcal{X}\left(\frac{4r}{r'}\right) {}_2 F_1\left( \mu_0 + \frac{3}{2}, \frac{3}{2}; \mu_0+1; \left(\frac{r}{r'}\right)^2 \right)   \frac{\Tilde{\eta}(r') \phi\left(\frac{r'}{R}\right)}{(1+|\log{r'}|)^{s}}  \frac{dr'}{r'}.
\end{align*}
Next, since $0<\left( \frac{r}{r'} \right)^2\le \frac{1}{16}$, by the monotonicity of ${}_2 F_1$ (when all entries are positive), for a.e. $(r,y)\in M$, the integral above is bounded below by (set $\delta = \max( 8r, 4 )$)
\begin{equation*}
    \int_{\delta}^R (1+|\log{t}|)^{-s} \frac{dt}{t} = \int_{\log{\delta}}^{\log{R}} (1+x)^{-s} dx \to \infty, \quad \textrm{as}\quad R\to \infty,
\end{equation*}
since $s\le 1$.

However, we compute for all $R>0$,
\begin{equation*}
    |\nabla f_R| + |f_R/r'| \lesssim r'^{1-d/2+\mu_0} (1+|\log{r'}|)^{-s} \mathbf{1}_{r'\ge 2},
\end{equation*}
and whence
\begin{align*}
    \| \nabla f_R\|_p^p + \|f_R/r\|_p^p &\lesssim \int_1^\infty t^{p(1-d/2+\mu_0)} (1+|\log{t}|)^{-s} t^{d-1} dt\\
    &\sim \int_0^\infty e^{-\left[ p\left( \frac{d-2}{2} - \mu_0 \right) - d \right] t} (1+t)^{-sp} dt < \infty 
\end{align*}
for all
\begin{equation*}
    p > \frac{d}{\frac{d-2}{2} - \mu_0},
\end{equation*}
or $p= \frac{d}{\frac{d-2}{2} - \mu_0}$ and $s > 1/p$. The proof of Lemma~\ref{upperthrshold} is now complete.

\end{proof}

\begin{remark}\label{remark2}
A routine computation shows that, for each $1<q\le \infty$, at the endpoint $p=p_0'$ the counterexample
\begin{equation*}
    f_R(z) = u_0(y) r^{2-d/2+\mu_0} (1+|\log{r}|)^{-s} \Tilde{\eta}(r) \phi(r/R),\quad 1/q < s \le 1,\quad R>0
\end{equation*}
satisfies the uniform bound
\[
    \sup_{R\ge 10}\;\|\nabla_H f_R\|_{(p_0',q)} \le C,
\]
where $C$ is independent of $R$.

Consequently, the Lorentz-type endpoint estimate
\begin{equation*}
    \| H^{1/2} f \|_{(p_0',\infty)} \le C \, \|\nabla_H f\|_{(p_0',q)}
\end{equation*}
is sharp: it holds with $q=1$, but fails for every $q>1$.

\end{remark}

\appendix

\section{Asymptotic calculus}\label{sec5}

The asymptotic expansions obtained in this appendix fit naturally within the 
framework of microlocal analysis on manifolds with boundary, where the 
behaviour of distributional kernels is described by polyhomogeneous conormal 
structures on the iterated blow-up of the double space. 
Following the foundational development of the $b$- and scattering calculi 
by Melrose \cite{Melrose1,Melrose2}, one resolves the 
geometric singularities of resolvent-type kernels by working on the 
blown-up space $M_{b,sc}^2$ and tracking their index sets at the boundary 
hypersurfaces. 
Subsequent refinements by Hassell-Melrose-Vasy and Guillarmou-Hassell 
\cite{HMV,GH1,GH2} provide a precise 
microlocal description of the resolvent and spectral measure near radial 
points and at the scattering faces. 
The explicit computations carried out below should therefore be viewed as a 
concrete model of the polyhomogeneous structure predicted by this analytic 
framework, specialized to the Riesz potential $H^{-1/2}(z,z')$ near the 
right and left boundary faces.

We begin by recalling \eqref{formula1}.  
For $4r \le r'$, one has
\begin{equation*}
    (H+\lambda^2)^{-1}(z,z')
    = (rr')^{1-d/2}
      \sum_{j\ge 0} u_j(y)\, \overline{u_j(y')}
      I_{\mu_j}(\lambda r)\, K_{\mu_j}(\lambda r').
\end{equation*}
Therefore, integrating in $\lambda$ and using the classical identity 
involving the Gaussian hypergeometric function, we obtain
\begin{align*}
    &H^{-1/2}(z,z')
    \sim (rr')^{1-d/2}
        \sum_{j\ge 0} u_j(y)\, \overline{u_j(y')}
        \int_0^\infty 
            I_{\mu_j}(\lambda r)\, K_{\mu_j}(\lambda r')\, d\lambda \\
    &\sim \sum_{j=0}^\infty
        (rr')^{1-d/2}
        u_j(y)\, \overline{u_j(y')}
        \frac{\Gamma(1/2+\mu_j)\Gamma(1/2)}{2\Gamma(\mu_j+1)}
        \left(\frac{r}{r'}\right)^{\mu_j}
        r'^{-1}\,
        {}_2F_1\!\left(
            \frac12+\mu_j,\frac12;\mu_j+1;\frac{r^2}{r'^2}
        \right).
\end{align*}
We now use the standard series expansion
\begin{equation}\label{hypergeo}
    {}_2F_1(a,b;c;z)
        = \sum_{n=0}^\infty
          \frac{(a)_n (b)_n}{(c)_n\, n!}\, z^n,
    \qquad |z|<1,
\end{equation}
where $(e)_n$ denotes the Pochhammer symbol:
\begin{equation*}
    (e)_n =
    \begin{cases}
        1, & n=0, \\
        e(e+1)\cdots(e+n-1), & n>0.
    \end{cases}
\end{equation*}
Substituting this expansion yields
\begin{align*}
    H^{-1/2}(z,z')
    &\sim
      \sum_{j=0}^\infty
      \sum_{n=0}^\infty
      \mathcal{A}_{n,j}(r,y,y')
      \, r'^{-d/2 - \mu_j - 2n},
\end{align*}
where the coefficients are given explicitly by
\begin{equation*}
     \mathcal{A}_{n,j}(r,y,y')
     = \frac{\Gamma(1/2+\mu_j)\Gamma(1/2)}{2\Gamma(\mu_j+1)}
       \frac{(1/2+\mu_j)_n (1/2)_n}{(\mu_j+1)_n\, n!}
       \bigl( r^{\,1-\frac d2 + \mu_j + 2n}\, u_j(y) \bigr)
       \overline{u_j(y')}.
\end{equation*}
In particular, the leading term $\mathcal{A}_{0,j}$ exhibits 
$H$-harmonicity.  
Indeed, using the eigenfunction relation
\[
    \left(\Delta_Y + V_0 + \left(\frac{d-2}{2}\right)^2 \right) u_j = \mu_j^2 u_j,
\]
a direct computation shows that for each $j\ge 0$,
\begin{align*}
    H\!\left( r^{1-\frac d2 + \mu_j}\, u_j(y) \right)
    &= \left(
        -\partial_r^2
        - \frac{d-1}{r}\partial_r
        + \frac{\Delta_Y + V_0}{r^2}
      \right)
      \left(
        r^{1-\frac d2 + \mu_j}\, u_j(y)
      \right) =0.
\end{align*}
Thus, the leading coefficient in the rbz expansion is $H$-harmonic. This is precisely the concrete manifestation of the harmonic annihilation 
phenomenon anticipated in Subsection~\ref{sec1.3}, in which the potentially 
problematic leading term is annihilated by the operator $H$, and only higher-order terms contribute to the asymptotic behavior of 
$H_z H^{-1/2}(z,z')$ near the right boundary.

For completeness, we record here the analogue of the preceding calculation
for the left boundary, namely the $\mathrm{lbz}$-face of the doubly blow-up 
space $M_{b,sc}^2$. The analysis is parallel to the rbz-case, with the roles 
of the variables $z=(r,y)$ and $z'=(r',y')$ interchanged. This computation 
provides the full leading behavior of the Riesz potential $H^{-1/2}(z,z')$ 
as either variable approaches the boundary of the compactification 
$\overline{M}$.

We begin again from the resolvent representation.  
For $4 r' \le r$, the formula \eqref{formula2} yields
\begin{equation*}
    (H+\lambda^2)^{-1}(z,z')
    = (rr')^{1-d/2}
      \sum_{j\ge 0} u_j(y)\, \overline{u_j(y')}
      I_{\mu_j}(\lambda r')\, K_{\mu_j}(\lambda r). 
\end{equation*}
Integrating in $\lambda$ and using the same Bessel integral identity as in the 
rbz analysis, we obtain
\begin{align*}
    H^{-1/2}(z,z')\sim \sum_{j\ge 0}
         (rr')^{1-d/2}
         u_j(y)\, \overline{u_j(y')}
         C_j
         \left(\frac{r'}{r}\right)^{\mu_j}
         r^{-1}\,
         {}_2F_1\!\left(
            \frac12+\mu_j,\frac12;\mu_j+1;\frac{r'^2}{r^2}
         \right).
\end{align*}
We expand the hypergeometric function by its standard series \eqref{hypergeo}, yielding the asymptotic expansion at the $\mathrm{lbz}$-face:
\begin{align*}
    H^{-1/2}(z,z')
    &\sim 
     \sum_{j=0}^\infty
     \sum_{n=0}^\infty
     \widetilde{\mathcal{A}}_{n,j}(r',y',y)
     \, r^{-d/2 - \mu_j - 2n},
\end{align*}
where the coefficients are explicitly given by
\begin{equation*}
    \widetilde{\mathcal{A}}_{n,j}(r',y',y)
     = \frac{\Gamma(1/2+\mu_j)\Gamma(1/2)}
            {2\Gamma(\mu_j+1)}
       \frac{(1/2+\mu_j)_n (1/2)_n}
            {(\mu_j+1)_n\, n!}
       \Bigl(
         r'^{\,1-\frac d2 + \mu_j + 2n}\,
         \overline{u_j(y')}
       \Bigr)
       u_j(y).
\end{equation*}
The $H$-harmonicity of the leading term is again transparent.  
Indeed, for each $j\ge 0$,
\begin{align*}
    H_{z'}\!\left( r'^{\,1-\frac d2 + \mu_j}\, \overline{u_j(y')} \right)
    &= \left(
        -\partial_{r'}^2
        - \frac{d-1}{r'}\partial_{r'}
        + \frac{\Delta_{Y'} + V_0}{r'^2}
       \right)
       \left(
         r'^{\,1-\frac d2 + \mu_j}\, \overline{u_j(y')}
       \right) =0
\end{align*}
Thus the asymptotic expansion at the left boundary also exhibits the 
\emph{harmonic annihilation} phenomenon: the leading term is annihilated 
by the operator $H$, and only higher-order terms contribute to 
$H^{1/2}$ near the $\mathrm{lbz}$-face.

\medskip

Combining the expansions at the $\mathrm{rbz}$- and $\mathrm{lbz}$-faces 
provides a precise description of the behavior of the kernel 
$H^{-1/2}(z,z')$ near the boundary of the compactified double space. 
In particular, this illustrates concretely the general mechanism by which 
the potentially problematic leading terms vanish under the action of $H$, 
as predicted by the harmonic annihilation heuristic in 
Subsection~\ref{sec1.3}.

\bigskip
{\bf Acknowledgments.} 
Part of this note is contained in the author's Ph.D thesis \cite{He_phdthesis}. He would like to thank his Ph.D supervisor Adam Sikora for helpful comments and support. He also would like to thank Professor Andrew Hassell for the useful discussion about the parametrix construction and encouragement.


\bibliographystyle{abbrv}

\bibliography{bibl.bib}

\end{document}